\newtheorem{theorem}{Theorem}[section]
\newtheorem{lemma}[theorem]{Lemma}
\newtheorem{corollary}[theorem]{Corollary}
\newtheorem{proposition}[theorem]{Proposition}
\theoremstyle{definition}
\newtheorem{definition}[theorem]{Definition}
\newtheorem{example}[theorem]{Example}
\theoremstyle{remark}
\newtheorem{remark}[theorem]{Remark}
\newtheorem{criterion}[theorem]{Criterion}
\numberwithin{equation}{section}
\def\Pic{{\mathrm{Pic}}}
\def\rank{{\text{rank}}}
\def\Hom{{\text{Hom}}}
\def\deg{{\text{deg}}}
\def\I{{\text{I}}}
\def\III{{\text{III}}}
\def\Pic{{\text{Pic}}}
\begin{document}

\title{The 2-divisibility of divisors on K3 surfaces in characteristic 2}

\author{Toshiyuki Katsura}
\thanks{Research of the first author is partially supported by JSPS Grant-in-Aid 
for Scientific Research (C) No.23K03066 and the second author by JSPS
Grant-in-Aid for Scientific Research (A) No.20H00112.
} 

\address{Graduate School of Mathematical Sciences, The University of Tokyo, 
Meguro-ku, Tokyo 153-8914, Japan}
\email{tkatsura@g.ecc.u-tokyo.ac.jp}

\author{Shigeyuki Kond\=o}
\address{Graduate School of Mathematics, Nagoya University, Nagoya,
464-8602, Japan}
\email{kondo@math.nagoya-u.ac.jp}

\date{\today}

\author{Matthias Sch\"utt}
\address{Institut f\"ur Algebraische Geometrie, 
Leibniz Universit\"at  Hannover, Welfengarten 1, 30167 Hannover, Germany, and
\newline\indent
Riemann Center for Geometry and Physics, 
  Leibniz Universit\"at Hannover, 
  Appelstrasse 2, 30167 Hannover, Germany}
\email{schuett@math.uni-hannover.de}

\date{\today}

\begin{abstract}
We show that K3 surfaces in characteristic 2 can admit sets of $n$ disjoint 
smooth rational curves whose sum is divisible by 2 in the Picard group,
for each $n=8,12,16,20$.
More precisely, all values occur on supersingular K3 surfaces,
with  exceptions only at Artin invariants 1 and 10,
while on K3 surfaces of finite height, only $n=8$ is possible.
\end{abstract}

\maketitle

\section{Introduction}
Let $k$ be an algebraically closed field of characteristic $p\geq 0$,
and let $X$ be a K3 surface defined over $k$. 
Let $E_i$ ($i = 1, 2, \ldots, n)$ be disjoint $(-2)$-curves (i.e. non-singular rational curves) on $X$.
In case of $p = 0$, Nikulin \cite{N} showed that if $\sum_{i=1}^{n}E_i$ is divisible by 2
in ${\rm Pic}(X)$, $n$ is equal to either 8 or 16. If $n = 16$, he showed that 
by using $\frac{1}{2}\sum_{i=1}^{16}E_i$ we can construct
a covering $Y$ of degree 2 of $X$ which is an abelian surface. In this case
$X$ is the Kummer surface
associated with the abelian surface $Y$ (cf. Nikulin \cite[Theorem 1]{N}).
This result also holds in characteristic $p \geq 3$ 
(cf. Shepherd-Barron \cite[Remark 3.3 (iii)]{SB}, Matsumoto \cite[Proposition 4.7]{M2}).
But if $p=2$, the situation is
a bit different,
depending on the Picard number $\rho$  and, in the supersingular case $\rho=22$, on the Artin invariant $\sigma$ (see Remark \ref{rem:NS}).

\begin{theorem}
\label{thm}
Let $X$ be a K3 surface in characteristic $p=2$.
Let $n\in {\bf N}$ and assume that $X$ contains disjoint $(-2)$-curves $E_1,\hdots,E_n$
such that 
$\sum_{i=1}^{n}E_i$ is divisible by 2
in ${\rm Pic}(X)$. Then $n\in\{8,12,16,20\}$.
More precisely,
\begin{enumerate}
\item[(i)]
if $\rho(X)<22$, then $n=8$, and this case occurs;
\item[(ii)]
if $\rho(X)=22$, then 
all values for $n$ occur on $X$ except that
\[
n\neq 20 \;\; \text{ if } \; \sigma=1 \;\;\; \text{ and } \;\;\; n\neq 8, 16 \;\; \text{ if } \; \sigma=10.
\]
\end{enumerate}

\end{theorem}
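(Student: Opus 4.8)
The plan is to recast the $2$-divisibility as a statement about even overlattices and then to argue separately according to the height of $X$. Put $R=\bigoplus_{i=1}^n\bbZ E_i$. Since the $E_i$ are pairwise disjoint $(-2)$-curves, $R\cong A_1^n$ is negative definite, so the $E_i$ are independent in $\NS(X)$ and $n\le\rho(X)\le 22$. The hypothesis gives $v:=\tfrac12\sum_{i=1}^n E_i\in\NS(X)$, so $N:=R+\bbZ v$ is an even overlattice of $R$ of index $2$. From $v^2=-n/2$ and the evenness of $\NS(X)$ we get $-n/2\in2\bbZ$, i.e. $4\mid n$; with $n\le 22$ this already restricts to $n\in\{4,8,12,16,20\}$. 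The class $\bar v$ spans an isotropic line in the discriminant group $(\bbZ/2)^n$ of $A_1^n$, and $N$ is the corresponding overlattice; I would compute its finite quadratic form $q_N$ (a $2$-elementary form on a group of order $2^{n-2}$) together with its Milgram signature modulo $8$, once and for all, as every later constraint is extracted from $q_N$ through Nikulin's embedding theory and the Milgram congruence.

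Suppose first $\rho(X)<22$. Then $X$ has finite height (so in fact $\rho\le 20$), and its N\'eron--Severi lattice embeds primitively into the even unimodular K3 lattice $\Lambda=U^3\oplus E_8^2$, just as over $\bbC$. Saturating $N$ in $\Lambda$ and using $q_{N^\perp}\cong -q_N$ together with the Milgram congruence for $N^\perp$---whose ambient signature is that of $\Lambda$, namely $3-19\equiv 0\bmod 8$---forces $n\equiv 0\pmod 8$, hence $n\in\{8,16\}$. This is Nikulin's even-set-of-nodes argument \cite{N}, which is insensitive to the characteristic because it only uses the embedding into $\Lambda$. To remove $n=16$ I would invoke the characteristic-$2$ obstruction to the Kummer structure: an even set of $16$ curves produces a flat (inseparable) double cover of $X$ whose minimal model is an abelian surface, forcing $X$ to be its Kummer surface and hence supersingular, against $\rho<22$. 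Thus $n=8$; its occurrence I would certify by a single explicit finite-height example, for instance a suitable genus-$1$ fibration in characteristic $2$ carrying eight disjoint $(-2)$-curves with $2$-divisible sum.

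Now suppose $\rho(X)=22$, so $X$ is supersingular and $\NS(X)\cong N_\sigma$, the Rudakov--Shafarevich lattice of signature $(1,21)$ with $2$-elementary discriminant form of length $2\sigma$, $1\le\sigma\le 10$. The decisive difference from the finite-height case is that $q_{N_\sigma}$ is nonzero and $\operatorname{sign}(N_\sigma)=1-21\equiv 4\bmod 8$, so the Milgram congruence for a primitive complement of $N$ is shifted---and this is exactly what now admits the residues $n\equiv 4\pmod 8$, producing the new values $n=12,20$. Concretely, a value $n$ is achievable for a given $\sigma$ if and only if $N$ embeds primitively into $N_\sigma$, and by Nikulin's existence criterion this reduces to exhibiting an even lattice $K$ of signature $(1,21-n)$ whose form glues $q_{N_\sigma}$ to $-q_N$ along a common $2$-elementary subgroup. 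Carrying out this $2$-adic bookkeeping should reproduce the stated list precisely: each $n\in\{8,12,16,20\}$ embeds for a range of $\sigma$, the only failures being $n=20$ at $\sigma=1$ (too small a discriminant to absorb the large form $q_{N_{20}}$) and $n=8,16$ at $\sigma=10$ (maximal discriminant, no room left for $K$). To turn these embeddings into geometry I would use Ogus' Torelli theorem to realize $N_\sigma$ by an actual supersingular K3 surface and a Weyl-group/chamber argument to move the classes $E_i$ into the effective cone, so that they are represented by irreducible, pairwise disjoint $(-2)$-curves; this makes every admissible pair $(n,\sigma)$ genuinely occur.

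The exclusion of $n=4$ is uniform: there $N\cong D_4$, and $q_{D_4}$ fails the embedding criterion in both regimes---in finite height because $4\not\equiv 0\pmod 8$, and in the supersingular case because the required complement would carry a $2$-elementary form of the wrong Milgram signature for every $\sigma$. I expect the principal obstacle to be the fine discriminant-form analysis at $p=2$: the classification of finite quadratic forms there is considerably more delicate than for odd primes---one must track the split and non-split hyperbolic pieces together with the oddity---and it is precisely this analysis that has to yield the two exceptions and no others. A secondary difficulty is the geometric realization in the supersingular case, namely upgrading abstract primitive embeddings to honest configurations of disjoint irreducible curves with $2$-divisible sum; I anticipate this will require explicit models such as quasi-elliptic fibrations or inseparable covers to anchor the lattice data.
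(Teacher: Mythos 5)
Your proposal contains genuine gaps at two load-bearing points. First, the exclusion of $n=4$ cannot be done by discriminant forms: for $n=4$ your overlattice $N=\widetilde{A_1^4}$ is isomorphic to the root lattice $D_4$, and far from "failing the embedding criterion for every $\sigma$," $D_4$ occurs as an \emph{orthogonal direct summand} of the Rudakov--Shafarevich lattices for most Artin invariants (e.g.\ $U\oplus D_4\oplus E_8^2$ for $\sigma=1$; see Table \ref{tab}), so there is no Milgram or length obstruction whatsoever. The actual exclusion is cohomological/geometric: with ${\mathcal L}^{\otimes 2}\cong{\mathcal O}_X(\sum E_i)$ one shows ${\rm H}^0({\mathcal L})=0$ (if $\sum E_i\sim 2D$ with $D$ effective, then $D\cdot E_i=-1<0$ forces $\sum E_i\leq D$ and $D+(D-\sum E_i)\sim 0$, absurd) and ${\rm H}^0({\mathcal L}^{-1})={\rm H}^2({\mathcal L}^{-1})=0$, whence Riemann--Roch gives $\dim{\rm H}^1({\mathcal L}^{-1})=\frac n4-2\geq 0$, i.e.\ $n\geq 8$ (Lemma \ref{morethan8}). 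This also shows your "achievable iff $N$ embeds primitively" criterion is false in the "if" direction; the realization step (representing all $n$ classes simultaneously by \emph{disjoint irreducible} curves) is where the real work lies, and the paper anchors it in explicit genus-one and quasi-elliptic fibrations rather than a bare Weyl-chamber argument. Relatedly, the exclusion of $n=20$ at $\sigma=1$ is not a discriminant count in the paper (your heuristic would have to rule out all rank-$20$ \emph{overlattices} of $\widetilde{A_1^{20}}$, not just the primitive embedding); it is proved geometrically via the induced quasi-elliptic fibration with only $\mathrm{III}$ and $\mathrm{I}_0^*$ fibers, which does not exist by Shimada's classification.

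Second, your mechanism for killing $n=16$ in finite height is wrong in characteristic $2$. The double cover attached to ${\mathcal L}$ is purely inseparable ($\mu_2$-type), and its resolution always has irregularity $0$ because there is a dominant morphism $X^{(1/2)}\to Y$ and ${\rm Alb}(X^{(1/2)})=0$ (Lemma \ref{q}); so the cover is \emph{never} birational to an abelian surface, and the Nikulin/Kummer mechanism simply does not operate. (Even the final implication is off: Kummer surfaces of ordinary abelian surfaces in characteristic $2$ are not supersingular.) The correct argument is that whenever $n\neq 8$ the cover is forced to be rational or birational to a classical/supersingular Enriques surface, which makes $X$ unirational and hence supersingular, contradicting $\rho(X)<22$ (Proposition \ref{rational}, Corollary \ref{cor:n=8}). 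Your Milgram computation giving $n\equiv 0\pmod 8$ in the finite-height case is fine (granting the lifting needed to embed ${\rm NS}(X)$ into the K3 lattice), but the step from $n\in\{8,16\}$ to $n=8$ needs the inseparable-cover analysis, not the Kummer structure.
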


The proof of the theorem proceeds by analysing the structures of $X$ and the covering surface $Y$.
To this end, we  define the conductrix $A$ as in the case of Enriques surfaces (cf.\  Lemma \ref{B=2A} and also \cite{DK}).
In case $A = \emptyset$, we have either $n = 8$ or $n = 12$.
If $n = 8$, then $Y$ contains some  rational singularity while being  birational to a K3 surface, and if $n = 12$, then $Y$ is
nonsingular and birational to an Enriques surface (classical or supersingular, see Proposition \ref{rational}). 
In case $A \neq \emptyset$, then we see $n = 8,12, 16$ or $20$, and 
we show that on $X$ there exists a rational vector field $D$ 
without isolated singular points such that $X^D$ is a nonsingular rational surface
with Picard number $22$ (Theorem \ref{theorem}). The case $n= 20$ is the most interesting case, and 
in this case we make clear the divisibility of the divisor
by using the structure of quasi-elliptic fibration (Corollaries \ref{replace} and \ref{replace2}).
In particular, for quasi-elliptic K3 surfaces with 20 singular fibers of type ${\rm III}$
we determine the divisibility of the divisor $\sum_{i=1}^{20}E_i$ by $2$, where $E_i$'s are
irreducible components of reducible singular fibers (Corollaries \ref{CC'1} and \ref{CC'2}).
We also give  concrete examples of K3 surfaces which have a divisor $\sum_{i=1}^{n}E_i$ divisible by $2$
(Sections \ref{s:ex}, \ref{s:20}).
They are complemented by more abstract arguments in Section \ref{s:pf}
which use lattice theory and genus one fibrations to
complete the proofs of the existence parts of Theorem \ref{thm}.

Note that Y. Matsumoto examines the divisibility by 2
of divisors on K3 surfaces from a different view-point of ours (\cite{M2}).
Namely, he treats the case of $n = 16$ and also includes divisors supported on non-disjoint 
$(-2)$-curves.

\subsection*{Notation} 
A lattice is a free ${\bf Z}$-module $L$ of finite rank with 
a ${\bf Z}$-valued non-degenerate symmetric bilinear form $x\cdot y$ ($x, y \in L$).
We denote by $L^*$ the dual of $L$, that is, $L^*=\Hom(L,{\bf Z})$.  
A lattice is called unimodular if $L \cong L^*$ and even if $x\cdot x$ is even for any $x\in L$.  
A root lattice is a negative definite lattice generated by $(-2)$-vectors.  
We denote by $A_m, D_n$ or $E_k$ $(m\geq 1, n\geq 4, k=6,7,8)$ 
a root lattice defined by the Cartan matrix of the same type.
We denote by $U$ the hyperbolic plane, that is, an even unimodular lattice of signature $(1,1)$.
For lattices $L, M$, $L\oplus M$ is the orthogonal direct sum and $L^{m}$ is 
the orthogonal direct sum of $m$-copies of $L$.
For an even lattice $L$, we define $A_L=L^*/L$ and $q_L: A_L \to {\bf Q}/2{\bf Z}$, 
$q_L(x+L)=x\cdot x + 2{\bf Z}$
which is called the discriminant group and the discriminant quadratic form of $L$, respectively.
We denote by $\ell(L)$ the number of minimal generators of $A_L$.
For an integer $m$, we denote by $L(m)$ the lattice obtained from $L$ by multiplying the bilinear form by $m$.

\subsection*{Acknowledgement}
The authors would like to thank Professor I.~Dolgachev who brought this problem
to their attention.

\section{Dualizing sheaf}

Let $X$ be a non-singular complete algebraic variety over $k$ of characteristic $p >0$ and let
${\mathcal E}$ be a vector bundle of rank 2 over $X$. Assume that we have an exact sequence
$$
   0 \rightarrow {\mathcal M} \longrightarrow {\mathcal E} \longrightarrow {\mathcal L} \rightarrow 0
$$
with line bundles ${\mathcal M}$ and ${\mathcal L}$. Then, as is well-known,
the dual bundle ${\mathcal E}^*$ of ${\mathcal E}$ is given by
\begin{equation}\label{dual}
  {\mathcal E}^*\cong {\mathcal E}\otimes {\mathcal L}^{-1}\otimes {\mathcal M}^{-1}.
\end{equation}
\begin{lemma}\label{torsion-free}
Let $X$ and $Y$ be normal varieties, and let $f : Y \longrightarrow X$ be a finite flat morphism.
Then, $f_{*}({\mathcal O})/{\mathcal O}_X$ is locally free.
\end{lemma}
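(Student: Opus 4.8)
The plan is to reduce the statement to a local assertion about modules and to produce a local splitting of the structure inclusion $\mathcal{O}_X\hookrightarrow f_*\mathcal{O}_Y$; this is the crux, because the naive trace/averaging splitting is unavailable precisely when the degree of $f$ is divisible by the characteristic, which is the situation of interest here.

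First I would record the basic structure. Since $f$ is finite and flat and $X,Y$ are (integral) varieties, the sheaf $f_*\mathcal{O}_Y$ is a coherent, flat, hence locally free $\mathcal{O}_X$-module; write $n$ for its rank, which is positive since $f$ is surjective (a flat morphism is open, a finite morphism is closed, so the image of the irreducible $Y$ is all of $X$). The canonical map $\mathcal{O}_X\to f_*\mathcal{O}_Y$, $1\mapsto 1$, is injective because $f_*\mathcal{O}_Y$ is a faithful $\mathcal{O}_X$-module, giving a short exact sequence
\[
0\longrightarrow \mathcal{O}_X\longrightarrow f_*\mathcal{O}_Y\longrightarrow f_*\mathcal{O}_Y/\mathcal{O}_X\longrightarrow 0 .
\]
It therefore suffices to prove that the cokernel is locally free.

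Next I would pass to a local ring $A=\mathcal{O}_{X,x}$ with maximal ideal $\mathfrak{m}$ and residue field $k$, and set $B=(f_*\mathcal{O}_Y)_x$, a free $A$-module of rank $n$ carrying an $A$-algebra structure with unit $1_B$. The key observation is that the image of $1_B$ in the fibre $B/\mathfrak{m}B$ is nonzero, since $\mathfrak{m}B$ is a proper ideal of the nonzero finite $A$-algebra $B$ by Nakayama's lemma. Consequently $\overline{1_B}$ extends to a $k$-basis $\overline{1_B},\bar e_2,\dots,\bar e_n$ of the $n$-dimensional space $B/\mathfrak{m}B$; lifting $e_2,\dots,e_n$ to $B$ and invoking Nakayama again shows that $1_B,e_2,\dots,e_n$ generate $B$, and a generating set of cardinality equal to the rank of a finite free module is automatically a basis. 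Hence $B=A\cdot 1_B\oplus Ae_2\oplus\cdots\oplus Ae_n$, so the cokernel $B/A$ is isomorphic to $Ae_2\oplus\cdots\oplus Ae_n$, a free $A$-module of rank $n-1$.

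The same computation applies at every point, the rank $n-1$ being locally constant, so $f_*\mathcal{O}_Y/\mathcal{O}_X$ is locally free of rank $n-1$. (Note that only the integrality of the varieties and the finite flatness of $f$ enter, not normality as such.) The main and essentially only genuine obstacle is the local freeness of the cokernel rather than merely its coherence: one must resist reaching for the trace map $\tfrac1n\,\mathrm{tr}$, which degenerates exactly in the characteristic-$2$, degree-$2$ setting relevant to this paper, and instead split the inclusion using only the ring structure, namely the fact that $1_B$ remains part of a basis modulo $\mathfrak{m}$. Equivalently, the argument can be phrased homologically as $\mathrm{Tor}_1^{A}(B/A,k)=\ker\big(k\to B/\mathfrak{m}B\big)=0$.
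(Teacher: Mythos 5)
Your proof is correct, but it proceeds quite differently from the paper's. The paper first notes that $f_*\mathcal{O}_Y$ is locally free by flatness and then reduces the lemma to showing that the quotient $f_*\mathcal{O}_Y/\mathcal{O}_X$ is torsion-free; this is where normality enters: a torsion element $x\in B$ with $ax=c\in A$ lies in $\operatorname{Frac}(A)$ and is integral over $A$, hence belongs to $A$. You instead ignore normality altogether and split the unit section off locally: since $\mathfrak{m}B$ is a proper ideal of the nonzero finite $A$-algebra $B$, the image of $1_B$ in $B/\mathfrak{m}B$ is nonzero, so by Nakayama (plus the fact that $n$ generators of a free rank-$n$ module over a local ring form a basis) $1_B$ extends to an $A$-basis of $B$, and the cokernel is free of rank $n-1$. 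Your argument is more robust on two counts: it applies to any finite flat surjective morphism of integral schemes without a normality hypothesis, and it establishes local freeness of the quotient directly, whereas the paper's reduction to torsion-freeness implicitly uses that a torsion-free quotient of a locally free sheaf is locally free --- an implication that is false for general sections (e.g.\ $\mathcal{O}\xrightarrow{(x,y)}\mathcal{O}^2$ on $\mathbb{A}^2$ has torsion-free but non-free cokernel) and really requires the observation, which your proof makes explicit, that the unit section is nowhere vanishing. What the paper's route buys is brevity and a statement (torsion-freeness of $B/A$) that is conceptually tied to the normality hypothesis appearing in the lemma; what yours buys is a complete and slightly more general proof. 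Your closing reformulation via $\operatorname{Tor}_1^A(B/A,k)=\ker(k\to B/\mathfrak{m}B)=0$ is also correct.
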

\begin{proof}
Since $f$ is flat, $f_{*}({\mathcal O})$ is locally free. Therefore,
it suffices to show that $f_{*}({\mathcal O})/{\mathcal O}_X$ has no torsion.
Since the situation is local, we may assume that both $X$ and $Y$ are affine varieties:
$X = {\rm Spec}~A$ and $Y = {\rm Spec}~ B$. We have an inclusion
$A \hookrightarrow B$ and $B$ is integral over $A$.
Suppose that $B/A$ has a torsion $x \in B$. Then, there exists a non-zero element $a \in A$
such that $ax = c \in A$. Then, we have $x = c/a$, and $x$ is contained in
the quotient field of $A$. Since $A$ is normal, we see that $x \in A$.
\end{proof}

Let $X$ and $Y$ be normal complete surfaces, and let $f : Y \longrightarrow X$ be
a finite flat morphism of degree 2. Then, by Lemma \ref{torsion-free}
we have an exact sequence
$$
0 \rightarrow {\mathcal O}_X \longrightarrow f_{*}{\mathcal O}_Y \longrightarrow {\mathcal L} \rightarrow 0
$$
with an invertible sheaf ${\mathcal L}$.
\begin{lemma}\label{dualizing-sheaf}
Under the notation above, we have $\omega_Y \cong f^{*}({\mathcal L}^{-1}\otimes \omega_X)$.
\end{lemma}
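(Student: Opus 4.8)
The plan is to deduce this from duality for the finite flat morphism $f$, reading off the relative dualizing sheaf from the exact sequence together with \eqref{dual}. Since $X$ and $Y$ are normal surfaces they are Cohen--Macaulay, so $f$ has a relative dualizing sheaf $\omega_{Y/X}$ on $Y$, characterized by the $f_*{\mathcal O}_Y$-linear isomorphism
\[
f_*\omega_{Y/X}\;\cong\;\mathcal{H}om_{{\mathcal O}_X}(f_*{\mathcal O}_Y,{\mathcal O}_X)=(f_*{\mathcal O}_Y)^*,
\]
where the $f_*{\mathcal O}_Y$-module structure on the right is $(b\cdot\varphi)(x)=\varphi(bx)$. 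Grothendieck--Serre duality for $f$ then gives $\omega_Y\cong\omega_{Y/X}\otimes f^*\omega_X$ (with $\omega_X$ the dualizing sheaf of $X$, which is trivial in the main application where $X$ is a K3 surface), so it suffices to identify $\omega_{Y/X}$ with $f^*{\mathcal L}^{-1}$.

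Next I would compute, writing ${\mathcal E}=f_*{\mathcal O}_Y$. From $0\to{\mathcal O}_X\to{\mathcal E}\to{\mathcal L}\to0$ one reads off $\det{\mathcal E}={\mathcal L}$, so \eqref{dual} with ${\mathcal M}={\mathcal O}_X$ yields ${\mathcal E}^*\cong{\mathcal E}\otimes{\mathcal L}^{-1}$. Combining with the projection formula,
\[
f_*\omega_{Y/X}\;\cong\;{\mathcal E}\otimes{\mathcal L}^{-1}=f_*{\mathcal O}_Y\otimes{\mathcal L}^{-1}\;\cong\;f_*(f^*{\mathcal L}^{-1}).
\]
Because $f$ is affine, pushforward is an equivalence onto quasi-coherent $f_*{\mathcal O}_Y$-modules; hence, once this chain of isomorphisms is seen to respect the $f_*{\mathcal O}_Y$-module structures, it descends to an isomorphism $\omega_{Y/X}\cong f^*{\mathcal L}^{-1}$ on $Y$, and then $\omega_Y\cong f^*{\mathcal L}^{-1}\otimes f^*\omega_X=f^*({\mathcal L}^{-1}\otimes\omega_X)$, as claimed.

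The step I expect to be the main obstacle is exactly this compatibility, since the self-duality of a rank-$2$ bundle is built from the \emph{alternating} pairing ${\mathcal E}\otimes{\mathcal E}\to\det{\mathcal E}$, which is not a priori compatible with multiplication in the sheaf of algebras ${\mathcal E}=f_*{\mathcal O}_Y$. I would settle it by a local computation on an open set where ${\mathcal L}$ is trivialized: there $f_*{\mathcal O}_Y={\mathcal O}_X\cdot 1\oplus{\mathcal O}_X\cdot\theta$ with $\theta^2=a\theta+b$, and a direct check on the dual basis $\{1^*,\theta^*\}$ gives $\theta\cdot1^*=b\theta^*$ and $\theta\cdot\theta^*=1^*+a\theta^*$, so that $\theta^*$ freely generates $({\mathcal E})^*=f_*\omega_{Y/X}$ as an ${\mathcal E}$-module. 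Finally, tracking a change of generator $\theta\mapsto u\theta+v$ (with $u$ a unit) shows $\theta^*\mapsto u^{-1}\theta^*$, which is precisely the transition law of ${\mathcal L}^{-1}$; hence $\theta^*$ glues to a global generator realizing $\omega_{Y/X}\cong f^*{\mathcal L}^{-1}$. This local bookkeeping is the crux, the remainder being formal.
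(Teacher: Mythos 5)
Your proposal is correct and follows essentially the same route as the paper: duality for the finite morphism $f$ identifies $f_*\omega_Y$ with $\mathcal{H}om(f_*\mathcal{O}_Y,\omega_X)$, the self-duality formula \eqref{dual} applied to the extension $0\to\mathcal{O}_X\to f_*\mathcal{O}_Y\to\mathcal{L}\to 0$ converts this to $f_*\mathcal{O}_Y\otimes\mathcal{L}^{-1}\otimes\omega_X$, and the projection formula plus affineness of $f$ descends the isomorphism to $Y$. The only difference is that you make explicit (and verify locally) the $f_*\mathcal{O}_Y$-linearity needed for the descent, a point the paper passes over with ``since $f$ is a finite morphism''; your extra bookkeeping is sound and harmless.
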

\begin{proof}
By the duality theorem of a finite morphism, we have
$$
\begin{array}{rl}
f_{*}{\omega_Y}  &\cong {\mathcal Hom}(f_{*}{\mathcal O}_Y, \omega_X) \cong {\mathcal Hom}(f_{*}{\mathcal O}_Y, {\mathcal O}_X)\otimes \omega_X \\
     &\cong f_{*}{\mathcal O}_Y \otimes {\mathcal L}^{-1}\otimes \omega_X 
     \quad (\mbox{by (\ref{dual}}))\\
  &\cong f_{*}\{{\mathcal O}_Y \otimes f^*({\mathcal L}^{-1}\otimes \omega_X )\} \cong f_{*}(f^*({\mathcal L}^{-1}\otimes \omega_X )).
\end{array}
$$
Since $f$ is a finite morphism, we have 
$\omega_Y \cong f^*({\mathcal L}^{-1}\otimes \omega_X )$.
\end{proof}

\section{$(-2)$-curves on K3 surfaces}
From here on, we assume $p = 2$ unless otherwise mentioned.
We use the notation from the introduction and assume that 
the divisor $\sum_{i=1}^{n}E_i$ is divisible by 2.
Then, there exists an invertible sheaf ${\mathcal L}$ such that
$$
    {\mathcal L}^{\otimes 2}\cong {\mathcal O}_X(\sum_{i=1}^{n}E_i).
$$
In this section we determine the possibility of $n$ and analyse 
the structure of the covering of $X$ constructed by using ${\mathcal L}$.

The self-intersection number of ${\mathcal L}$ is given by
\begin{equation}\label{0}
   {\mathcal L}^2 = -n/2.
\end{equation}
\begin{lemma}\label{4}
$n \equiv 0~({\rm mod}~4)$.
\end{lemma}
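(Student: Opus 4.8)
The plan is to exploit the fact that $\mathcal{L}^{\otimes 2}\cong \mathcal{O}_X\left(\sum_{i=1}^{n}E_i\right)$, so the class $\mathcal{L}$ lives inside the Picard lattice and has self-intersection $\mathcal{L}^2=-n/2$ by \eqref{0}. The key observation is that on a K3 surface the intersection form on $\Pic(X)$ is an even lattice, so \emph{every} class $D\in\Pic(X)$ satisfies $D^2\in 2\bbZ$. Applying this to $D=\mathcal{L}$ immediately forces $-n/2$ to be an even integer, and hence $n\equiv 0\pmod 4$.

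Let me lay out the steps in order. First I would recall that for a K3 surface $\omega_X\cong \mathcal{O}_X$, so by adjunction every irreducible curve $C$ satisfies $C^2=2p_a(C)-2$, an even number; more structurally, $\Pic(X)\cong \NS(X)$ is a torsion-free even lattice, a standard fact I am permitted to invoke. Second, I would compute $\mathcal{L}^2$: squaring the linear-equivalence relation $2\mathcal{L}\equiv \sum_{i=1}^n E_i$ gives $4\,\mathcal{L}^2=\left(\sum_i E_i\right)^2=\sum_{i,j}E_i\cdot E_j$. Since the $E_i$ are disjoint $(-2)$-curves, the cross terms vanish and each diagonal term contributes $E_i^2=-2$, so $\left(\sum_i E_i\right)^2=-2n$ and therefore $\mathcal{L}^2=-n/2$, recovering \eqref{0}. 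Third, because $\mathcal{L}\in\Pic(X)$ and the lattice is even, $\mathcal{L}^2$ is an even integer, so $-n/2\in 2\bbZ$, which yields $n\equiv 0\pmod 4$.

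I do not expect any genuine obstacle here; the statement is essentially an integrality constraint flowing from the evenness of the K3 lattice together with the disjointness of the curves. The only point demanding a modicum of care is confirming that $\mathcal{L}^2$ is genuinely even rather than merely an integer: this is exactly where the K3 hypothesis enters, since on a surface with $\omega_X\cong\mathcal{O}_X$ the self-intersection of any divisor class is even by adjunction (equivalently, $\NS(X)$ is an even lattice). Everything else is a direct computation, so the lemma follows at once.
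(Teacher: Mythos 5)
Your argument is exactly the paper's: the paper derives $\mathcal{L}^2=-n/2$ from squaring $2\mathcal{L}\sim\sum_i E_i$ with the $E_i$ disjoint $(-2)$-curves, and then concludes from the evenness of $\mathcal{L}^2$ (the Picard lattice of a K3 surface being even) that $n\equiv 0\pmod 4$. You have merely spelled out the two steps the paper leaves implicit, and both are correct.
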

\begin{proof}
Since ${\mathcal L}^2$ is an even integer, this follows from (\ref{0}).
\end{proof}
\begin{lemma}\label{morethan8}
$\dim {\rm H}^0(X, {\mathcal L}^{-1}) =\dim {\rm H}^2(X, {\mathcal L}^{-1}) =0$,
$\dim {\rm H}^1(X, {\mathcal L}^{-1}) = \frac{n}{4} -2$.
\end{lemma}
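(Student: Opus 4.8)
The plan is to compute the Euler characteristic $\chi(X,\mathcal{L}^{-1})$ by Riemann--Roch and then to pin down the individual cohomology groups using Serre duality together with the intersection behaviour of the curves $E_i$. Since $X$ is a K3 surface we have $\omega_X\cong\mathcal{O}_X$, $K_X\sim 0$ and $\chi(\mathcal{O}_X)=2$. Riemann--Roch for the line bundle $\mathcal{L}^{-1}$ on the surface $X$ reads $\chi(X,\mathcal{L}^{-1})=\chi(\mathcal{O}_X)+\tfrac12\,\mathcal{L}^{-1}\cdot(\mathcal{L}^{-1}-K_X)=2+\tfrac12\mathcal{L}^2$, and by (\ref{0}) this equals $2-n/4$. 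Thus once I show $\dim H^0(X,\mathcal{L}^{-1})=\dim H^2(X,\mathcal{L}^{-1})=0$, the desired value $\dim H^1(X,\mathcal{L}^{-1})=n/4-2$ falls out immediately from $\chi(X,\mathcal{L}^{-1})=-\dim H^1(X,\mathcal{L}^{-1})$.

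For the two vanishing statements I would invoke Serre duality. Because $\omega_X\cong\mathcal{O}_X$, duality gives $H^2(X,\mathcal{L}^{-1})\cong H^0(X,\mathcal{L})^{*}$, so the two required vanishings amount to saying that neither $\mathcal{L}$ nor $\mathcal{L}^{-1}$ is effective. The key numerical input is that from $\mathcal{L}^{\otimes 2}\cong\mathcal{O}_X(\sum_i E_i)$ and the disjointness of the $(-2)$-curves one gets $2\,(\mathcal{L}\cdot E_j)=E_j^2=-2$, hence $\mathcal{L}\cdot E_j=-1$ for every $j$.

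The heart of the argument -- and the step I expect to carry the real content -- is ruling out effectivity. Suppose $\mathcal{L}\cong\mathcal{O}_X(D)$ with $D$ effective. Since $D\cdot E_j=-1<0$ and $E_j$ is an irreducible curve, each $E_j$ must appear in the support of $D$; as the $E_j$ are pairwise disjoint this forces $D\geq\sum_i E_i$, say $D=\sum_i E_i+R$ with $R\geq 0$. Then $2D\sim\sum_i E_i$ yields $\sum_i E_i+2R\sim 0$, an effective nonzero divisor linearly equivalent to zero, which is absurd on the projective surface $X$. Hence $H^0(X,\mathcal{L})=0$, and therefore $H^2(X,\mathcal{L}^{-1})=0$. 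The non-effectivity of $\mathcal{L}^{-1}$ is even more direct: if $\mathcal{L}^{-1}\cong\mathcal{O}_X(F)$ with $F\geq 0$, then squaring gives $\mathcal{O}_X(2F)\cong\mathcal{L}^{-2}\cong\mathcal{O}_X(-\sum_i E_i)$, so $2F+\sum_i E_i\sim 0$, again an effective nonzero divisor equivalent to zero, a contradiction; thus $H^0(X,\mathcal{L}^{-1})=0$.

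Putting these together, $\dim H^0(X,\mathcal{L}^{-1})=\dim H^2(X,\mathcal{L}^{-1})=0$, and the Riemann--Roch computation then forces $\dim H^1(X,\mathcal{L}^{-1})=n/4-2$, as claimed. The only genuinely delicate point is the effectivity dichotomy, which is why I single out the intersection numbers $\mathcal{L}\cdot E_j=-1$ as the decisive ingredient; everything else is formal bookkeeping with Riemann--Roch and duality on a K3 surface.
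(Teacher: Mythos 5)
Your proof is correct and follows essentially the same route as the paper: Serre duality reduces the vanishing of $H^2(X,\mathcal L^{-1})$ to the non-effectivity of $\mathcal L$, which both arguments establish from $\mathcal L\cdot E_j=-1$ forcing $\sum_i E_i$ into the support of a putative effective representative and producing a nonzero effective divisor linearly equivalent to $0$; the $H^1$ dimension then drops out of Riemann--Roch. The only cosmetic difference is that the paper deduces $H^0(X,\mathcal L^{-1})=0$ directly from $H^0(X,\mathcal L^{-2})=H^0(X,\mathcal O_X(-\sum_i E_i))=0$, whereas you rerun the effectivity argument, but the content is identical.
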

\begin{proof}
By ${\rm H}^0(X, {\mathcal L}^{-2})\cong {\rm H}^0(X, {\mathcal O}_X(-\sum_{i=1}^nE_i)) = 0$,
we have ${\rm H}^0(X, {\mathcal L}^{-1})= 0$. 
If ${\rm H}^0(X, {\mathcal L})\neq 0$, then there exists an effective divisor $D$ such that
$\sum_{i=1}^{n} E_i \sim 2D$. Since $(D, E_i) = -1 < 0$, we see $\sum_{i=1}^{n} E_i \subset D$.
Therefore, the effective divisor $D + (D -\sum_{i=1}^{n} E_i)$ is linearly equivalent to 0,
a contradiction. Therefore, we have ${\rm H}^0(X, {\mathcal L})= 0$, and by the Serre duality
we have ${\rm H}^2(X, {\mathcal L}^{-1})= 0$. Using the Riemann-Roch theorem,
we have
$$
    - \dim {\rm H}^1(X, {\mathcal L}^{-1})= \chi({\mathcal L}^{-1}) = - \frac{n}{4} + 2.
$$
Hence, we complete our proof.
\end{proof}
\begin{corollary}\label{8,12,16,20}
$n = 8, 12, 16$, or $20$.
\end{corollary}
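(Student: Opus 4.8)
The plan is to deduce the corollary directly from the two preceding lemmas together with one elementary observation about the numerical lattice of $X$. Lemma~\ref{4} already gives $n\equiv 0\pmod 4$, so all that remains is to pin down the range $8\le n\le 20$.

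For the lower bound I would appeal to Lemma~\ref{morethan8}: the quantity $\dim H^1(X,\mathcal L^{-1})$ is the dimension of a cohomology group, hence a non-negative integer, and the equality $\dim H^1(X,\mathcal L^{-1})=\tfrac n4-2$ then forces $\tfrac n4-2\ge 0$, that is, $n\ge 8$. In particular the value $n=4$, which is permitted by the congruence alone, is ruled out, since it would demand a negative value of $h^1$.

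For the upper bound I would use that the $E_i$ are pairwise disjoint $(-2)$-curves, so $E_i\cdot E_j=-2\delta_{ij}$; they are therefore linearly independent and span a negative-definite sublattice of rank $n$ inside $\mathrm{Pic}(X)$ (which, $X$ being a K3 surface, is torsion-free and coincides with $\mathrm{Num}(X)$). By the Hodge index theorem the intersection form on $\mathrm{Num}(X)$ has signature $(1,\rho(X)-1)$, and the Picard number of a K3 surface in any characteristic satisfies $\rho(X)\le 22$; hence every negative-definite sublattice has rank at most $\rho(X)-1\le 21$. This gives $n\le 21$, which together with $n\equiv 0\pmod 4$ sharpens to $n\le 20$. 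Combining the three constraints yields $n\in\{8,12,16,20\}$.

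The only step requiring a moment's care is the upper bound, where one must invoke both that $\mathrm{Num}(X)$ carries exactly one positive square and that $\rho(X)\le 22$; the lower bound and the congruence are immediate from the stated lemmas. I do not anticipate any serious obstacle, as the corollary is essentially a bookkeeping step collecting the numerical constraints already in hand.
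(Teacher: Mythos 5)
Your proposal is correct and follows essentially the same route as the paper: Lemma~\ref{4} for the congruence, Lemma~\ref{morethan8} for the lower bound $n\ge 8$, and the rank bound on $\mathrm{NS}(X)$ for the upper bound. The only (immaterial) difference is that you sharpen the upper bound to $n\le 21$ via the Hodge index theorem, whereas the paper is content with $n\le 22$ from $\rank\,\mathrm{NS}(X)\le 22$; either suffices once combined with $n\equiv 0\pmod 4$.
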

\begin{proof}
$E_i$'s $(i = 1, 2, \ldots, n)$ are linearly independent in N\'eron--Severi group
${\rm NS}(X)$, and the rank of ${\rm NS}(X)$ is at most 22. Therefore, we have $n \leq 22$. 
By Lemma \ref{morethan8}, we have 
$\frac{n}{4} -2 \geq 0$ and by Lemma \ref{4}, we have $n \equiv 0~({\rm mod}~4)$.
Our result follows from these facts.
\end{proof}

Let $\{U_i = {\rm Spec}(A_i)\}$ be an affine open covering of $X$, and let $\{f_{ij}\}$
be a cocycle with respect to the covering $\{U_i\}$ 
which defines ${\mathcal L}$. Let $f_i =0$ be a local equation which defines
the divisor $\sum_{i=1}^{n}E_i$ on $U_i$. 
After adjusting the local equations $f_i$, we may assume that
\begin{equation}\label{1}
       f_{ij}^2 = f_i/f_j \quad \mbox{on}~U_i\cap U_j.
\end{equation}
Let $\tilde{U}_i \longrightarrow U_i$ be the covering defined by
\begin{equation}\label{2}
     z_i^2 = f_i.
\end{equation}
Then patching together by
$$
       z_i = f_{ij}z_j,
$$
we have a purely inseparable covering of degree 2:
$$
    f :  Y \longrightarrow X.
$$
Since $\sum_{i=1}^{n}E_i$ is nonsingular, we see that $Y$ is nonsingular over $E_i$ 
and that by (\ref{2}) we have $f^{-1}E_i = 2\tilde{E}_i$ ($i = 1, 2, \ldots n$).
Considering intersection numbers, we have
$$
    4 \tilde{E}_i^2 = (f^{-1}(E_i))^2 = \deg f\cdot E_i^2 = -4.
$$
Therefore, we have $\tilde{E}_i^2 = -1$, that is, $\tilde{E}_i$'s are 
exceptional curves of the first kind.

We consider a rational 1-form $\eta$ defined by
$$
    \eta = df_i/f_i  ~\mbox{on}~U_i.
$$
Then, this gives a rational one form on $X$. We denote the divisorial part by $(\eta)$
and the isolated singularity part by $\langle \eta \rangle$
(cf. Rudakov-Shafarevich \cite{RS}). Then,
we have
$$
 (\eta) = B - \sum_{i=1}^{n}E_i.
$$
with an effective divisor $B$. 
Since $E_i$'s are nonsingular curves, 
we see that $B$ doesn't intersect the divisor $\sum_{i=1}^{n}E_i$
and that no point in $\langle \eta \rangle$ is contained in $\sum_{i=1}^{n}E_i$.

\begin{lemma}\label{q}
The irregularity $q(Y) = 0$.
\end{lemma}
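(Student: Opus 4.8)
First I would pin down which irregularity is meant, because this is where the subtlety lies. The statement concerns the \emph{geometric} irregularity $q(Y)=\dim\mathrm{Alb}(Y)=\tfrac12 b_1(Y)$, not $h^1(\mathcal O_Y)$. Indeed, since $f$ is finite we have $H^1(Y,\mathcal O_Y)\cong H^1(X,f_*\mathcal O_Y)$, and $f_*\mathcal O_Y\cong\mathcal O_X\oplus\mathcal L^{-1}$ as $\mathcal O_X$-modules (the multiplication $z^2=f$ makes $z$ a local generator of $\mathcal L^{-1}$, compatibly with Lemma \ref{dualizing-sheaf}); by Lemma \ref{morethan8} this gives $\dim H^1(Y,\mathcal O_Y)=\tfrac n4-2$, which is \emph{positive} once $n>8$. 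So no Riemann--Roch count can force the vanishing, and the argument must instead exploit that $f$ is purely inseparable.

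The plan is to transport the vanishing $b_1(X)=0$, valid because $X$ is a K3 surface, across $f$. Since $f\colon Y\to X$ has degree $p=2$ and is purely inseparable, it is a universal homeomorphism; hence, by Grothendieck's topological invariance of the \'etale site, it induces isomorphisms on $\ell$-adic \'etale cohomology for $\ell\neq 2$, so that $H^1_{\mathrm{et}}(Y,{\bf Q}_\ell)\cong H^1_{\mathrm{et}}(X,{\bf Q}_\ell)=0$. Thus $b_1(Y)=b_1(X)=0$, and since the irregularity of a smooth projective surface equals $\tfrac12 b_1$, we get $q=0$. Equivalently one can argue via Frobenius: the relative Frobenius $F_Y\colon Y\to Y^{(2)}$ factors as $g\circ f$ through $X$, so on Albanese varieties the Frobenius isogeny of $\mathrm{Alb}(Y)$ factors through $\mathrm{Alb}(X)=0$, and an isogeny factoring through $0$ forces $\mathrm{Alb}(Y)=0$.

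The main obstacle is that $Y$ is singular: its singular locus lies over the isolated part $\langle\eta\rangle$ and over the singular points of $B$, hence is disjoint from the curves $\tilde{E}_i$. The universal-homeomorphism comparison above already yields $H^1_{\mathrm{et}}(Y,{\bf Q}_\ell)=0$ on the (possibly singular) $Y$ itself, so the remaining, and genuinely delicate, point is to pass to a resolution $\pi\colon\tilde Y\to Y$ without creating first cohomology. Here I would analyse the singularities of $Y$ explicitly and verify that they are rational, with simply connected exceptional fibres (trees of rational curves), so that $R^1\pi_*{\bf Q}_\ell=0$; the Leray spectral sequence for $\pi$ then gives $H^1_{\mathrm{et}}(\tilde Y,{\bf Q}_\ell)\cong H^1_{\mathrm{et}}(Y,{\bf Q}_\ell)=0$, whence $q=\tfrac12 b_1(\tilde Y)=0$. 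Everything in the first two paragraphs is formal; it is this control of the singularities of $Y$ that constitutes the real content of the proof.
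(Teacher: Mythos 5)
Your proof is essentially correct, and your second route is in substance the paper's own argument, just run in the opposite direction: instead of factoring the Frobenius of $Y$ as $Y\to X\to Y^{(2)}$, the paper factors the Frobenius $X^{(1/2)}\to X$ as $X^{(1/2)}\to Y\to X$ (the morphism $X^{(1/2)}\to Y$ is immediate from the construction, since $z_i=f_i^{1/2}$ makes sense on $X^{(1/2)}$, whereas the existence of an honest morphism $X\to Y^{(2)}$ needs a word of justification). One then gets a dominant morphism from the K3 surface $X^{(1/2)}$ onto $Y$, hence by the universal property a surjection $0=\mathrm{Alb}(X^{(1/2)})\twoheadrightarrow\mathrm{Alb}(Y)$, and $q(Y)=\dim\mathrm{Alb}(Y)=0$. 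Your opening observation that $q$ here must mean $\dim\mathrm{Alb}$ rather than $h^1(\mathcal O_Y)$ is exactly right and consistent with Corollary \ref{cohomologyY}.

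Where the proposal goes astray is the final paragraph. The lemma concerns $q(Y)$ of the singular surface itself, and both of your arguments already close at that level: for the \'etale route, $H^1_{\mathrm{et}}(\mathrm{Alb}(Y),\mathbf{Q}_\ell)$ injects into $H^1_{\mathrm{et}}(Y,\mathbf{Q}_\ell)$ because the image of the Albanese morphism generates $\mathrm{Alb}(Y)$, so $H^1_{\mathrm{et}}(Y,\mathbf{Q}_\ell)=0$ forces $\mathrm{Alb}(Y)=0$ with no resolution needed. Declaring the ``real content'' to be a proof that the singularities of $Y$ are rational with simply connected exceptional fibres is not only unnecessary but unworkable in general: when the conductrix $A\neq\emptyset$, $Y$ has non-isolated singularities and is not even normal, so that description of $\mathrm{Sing}(Y)$ is simply false there; and even in the isolated case, the paper establishes rationality of the singularities (Lemma \ref{non-rational}) \emph{using} $q(\tilde Y)=q(Y)=0$, so building your proof of the present lemma on that analysis would invert the logical order of the paper. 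The later statement $q(\tilde Y)=q(Y)=0$ is obtained not by controlling $R^1\mu_*$ but by the same dominant-map argument: $X^{(1/2)}\to Y$ lifts to the normalization $Y'$ and then rationally to $\tilde Y$, and a rational map from a smooth proper variety to an abelian variety is a morphism.
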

\begin{proof}
Since $f$ is a purely inseparable morphism of degree 2,  we have a dominant morphism
$X^{(1/2)} \longrightarrow Y$. Considering the universality of Albanese variety, we have
a surjective morphism ${\rm Alb}(X^{(1/2)}) \longrightarrow {\rm Alb}(Y)$.
Since $X^{(1/2)}$ is a K3 surface, we have $\dim {\rm Alb}(X^{(1/2)}) =0$.
Therefore, we have $q(Y) = \dim {\rm Alb}(Y) = 0$.
\end{proof}

Since $f$ is locally given by $A_i \hookrightarrow A_i[z_i]/(z_i^2 - f_i) \cong A_i \oplus A_iz_i$, 
we see that
$$
  f_{*}({\mathcal O}_Y) \cong {\mathcal O}_X \oplus {\mathcal L}^{-1}.
$$
Therefore, by Lemma \ref{dualizing-sheaf} we have 
\begin{equation}\label{3}
\omega_Y\cong f^{*}{\mathcal L}.
\end{equation}
Then, we have
\begin{equation}\label{4}
\omega_Y^{\otimes 2}\cong f^{*}{\mathcal L}^{\otimes 2} \cong f^{*}({\mathcal O}_X(\sum_{i=1}^{n}E_i)) = {\mathcal O}_X(2\sum_{i=1}^{n}\tilde{E}_i)).
\end{equation}
We denote by $Z$ the surface which we get by blowing-down $\tilde{E}_i$'s ($i = 1, 2, \ldots, n$)
on $Y$. Then, we have 
\begin{equation}\label{omega2}
\omega_Z^2 \cong {\mathcal O}_Z.
\end{equation}

\begin{lemma}\label{chi}
$\chi ({\mathcal O}_Z) = \chi ({\mathcal{O}}_Y) = 4 - (n/4)$. 
\end{lemma}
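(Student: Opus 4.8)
The plan is to treat the two asserted equalities separately, since they are independent: the value $\chi(\mathcal{O}_Y)=4-n/4$ is a cohomological computation that I would carry out on $X$, whereas $\chi(\mathcal{O}_Z)=\chi(\mathcal{O}_Y)$ is the invariance of $\chi(\mathcal{O})$ under the contractions of the $\tilde E_i$. For the value of $\chi(\mathcal{O}_Y)$ I would push everything down to $X$. As $f$ is finite, it is affine, so $R^qf_*\mathcal{O}_Y=0$ for $q>0$ and the Leray spectral sequence gives $\mathrm{H}^i(Y,\mathcal{O}_Y)\cong \mathrm{H}^i(X,f_*\mathcal{O}_Y)$ for all $i$; hence $\chi(\mathcal{O}_Y)=\chi(X,f_*\mathcal{O}_Y)$. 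Now I would invoke the decomposition $f_*\mathcal{O}_Y\cong\mathcal{O}_X\oplus\mathcal{L}^{-1}$ established above, which yields $\chi(\mathcal{O}_Y)=\chi(\mathcal{O}_X)+\chi(\mathcal{L}^{-1})$. Since $X$ is a K3 surface, $\chi(\mathcal{O}_X)=2$, and by Lemma \ref{morethan8} one has $\chi(\mathcal{L}^{-1})=-\dim \mathrm{H}^1(X,\mathcal{L}^{-1})=2-n/4$. Adding the two contributions gives $\chi(\mathcal{O}_Y)=4-n/4$.

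For the equality $\chi(\mathcal{O}_Z)=\chi(\mathcal{O}_Y)$ I would use that the $\tilde E_i$ are disjoint exceptional curves of the first kind and that $Y$ is nonsingular along each of them, as observed above. Thus the contraction $\pi:Y\to Z$ is, in a neighbourhood of each contracted curve, the blow-down of a smooth point, so $\pi_*\mathcal{O}_Y\cong\mathcal{O}_Z$ and $R^q\pi_*\mathcal{O}_Y=0$ for $q>0$. The Leray spectral sequence for $\pi$ then gives $\chi(\mathcal{O}_Y)=\chi(\mathcal{O}_Z)$, completing the chain of equalities.

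The computation is otherwise routine, and the step I would watch most carefully is this last one: I must make sure $\chi(\mathcal{O})$ is genuinely preserved by the contraction even if $Y$ (equivalently $Z$) carries singularities away from the $\tilde E_i$. This causes no trouble, because the vanishing $R^q\pi_*\mathcal{O}_Y=0$ needed above is purely local around the contracted $(-1)$-curves, where $Y$ is smooth; any singularities elsewhere are shared by $Y$ and $Z$ and do not affect the comparison.
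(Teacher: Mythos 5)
Your proposal is correct and follows essentially the same route as the paper: both compute $\chi(\mathcal{O}_Y)=\chi(f_*\mathcal{O}_Y)=\chi(\mathcal{O}_X)+\chi(\mathcal{L}^{-1})=4-n/4$ using the vanishing of higher direct images under the finite morphism $f$ and the splitting $f_*\mathcal{O}_Y\cong\mathcal{O}_X\oplus\mathcal{L}^{-1}$, and both obtain $\chi(\mathcal{O}_Z)=\chi(\mathcal{O}_Y)$ from the contraction of the exceptional curves $\tilde E_i$. Your extra remark that the vanishing $R^q\pi_*\mathcal{O}_Y=0$ is local around the contracted $(-1)$-curves, where $Y$ is smooth, is a welcome clarification of a step the paper leaves implicit.
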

\begin{proof}
Since $f$ is a finite morphism, we see $R^if_{*}({\mathcal O}_Y) = 0$ for $i \geq 1$.
By the Riemann-Roch theorem, we have
$$
\begin{array}{rl}
\chi ({\mathcal O}_Y) &=\chi (f_{*}({\mathcal O}_Y)) = \chi ({\mathcal O}_X \oplus {\mathcal L}^{-1}) 
= \chi ({\mathcal O}_X) + \chi ({\mathcal L}^{-1}) \\
   &= 2 + (2 - (n/4)) = 4 - (n/4).
\end{array}
$$
Since $\tilde{E}_i$'s are exceptional curves of the first kind, 
we have $\chi ({\mathcal O}_Z) = \chi ({\mathcal{O}}_Y)$.
\end{proof}
\begin{corollary}\label{cohomologyY}
$\dim {\rm H}^0(Y, {\mathcal O}_Y)= \dim {\rm H}^2(Y, {\mathcal O}_Y)=1$, 
$\dim {\rm H}^1(Y, {\mathcal O}_Y)=\frac{n}{4} -2$.
\end{corollary}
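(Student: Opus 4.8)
The plan is to reduce everything to cohomology on $X$ via the finiteness of $f$ together with the splitting $f_{*}\mathcal{O}_Y \cong \mathcal{O}_X \oplus \mathcal{L}^{-1}$ already recorded above. Since $f$ is a finite morphism, one has $R^i f_{*}\mathcal{O}_Y = 0$ for $i \geq 1$ (exactly as used in the proof of Lemma \ref{chi}), so the Leray spectral sequence degenerates and yields, for every $i$,
\[
{\rm H}^i(Y, \mathcal{O}_Y) \cong {\rm H}^i(X, f_{*}\mathcal{O}_Y) \cong {\rm H}^i(X, \mathcal{O}_X) \oplus {\rm H}^i(X, \mathcal{L}^{-1}),
\]
the last isomorphism because cohomology commutes with the direct sum.

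I would then read off the two summands degree by degree. Since $X$ is a K3 surface, $\dim {\rm H}^0(X,\mathcal{O}_X) = \dim {\rm H}^2(X,\mathcal{O}_X) = 1$ and $\dim {\rm H}^1(X,\mathcal{O}_X) = 0$. For the twisted summand I invoke Lemma \ref{morethan8}, which gives $\dim {\rm H}^0(X,\mathcal{L}^{-1}) = \dim {\rm H}^2(X,\mathcal{L}^{-1}) = 0$ and $\dim {\rm H}^1(X,\mathcal{L}^{-1}) = \frac{n}{4} - 2$. Adding the two contributions in each degree gives $\dim {\rm H}^0(Y,\mathcal{O}_Y) = \dim {\rm H}^2(Y,\mathcal{O}_Y) = 1$ and $\dim {\rm H}^1(Y,\mathcal{O}_Y) = \frac{n}{4} - 2$, which is precisely the assertion.

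As a consistency check (and an alternative route for the middle term), once $h^0$ and $h^2$ are in hand the value of $h^1$ also drops out of $\chi(\mathcal{O}_Y) = 4 - n/4$ from Lemma \ref{chi}, since $1 - h^1(\mathcal{O}_Y) + 1 = 4 - n/4$ forces $h^1(\mathcal{O}_Y) = \frac{n}{4} - 2$. There is essentially no serious obstacle here: the only points that need care are the vanishing $R^i f_{*}\mathcal{O}_Y = 0$ (immediate from finiteness) and the fact that the decomposition of $f_{*}\mathcal{O}_Y$ is a splitting of $\mathcal{O}_X$-modules, so that it is respected by cohomology. It is worth flagging that for $n > 8$ the value $h^1(\mathcal{O}_Y) = \frac{n}{4} - 2$ is strictly positive, in contrast with the vanishing $q(Y) = 0$ of Lemma \ref{q}; this discrepancy is the characteristic-$2$ phenomenon of a non-reduced Picard scheme, and it foreshadows the Enriques-type behaviour of $Y$ exploited later.
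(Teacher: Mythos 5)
Your proof is correct, and for the computation of $\mathrm{H}^2$ it takes a genuinely different (and arguably cleaner) route than the paper. You compute all three cohomology groups at once from $R^if_*\mathcal{O}_Y=0$ and the splitting $f_*\mathcal{O}_Y\cong\mathcal{O}_X\oplus\mathcal{L}^{-1}$, so that $\mathrm{H}^i(Y,\mathcal{O}_Y)\cong \mathrm{H}^i(X,\mathcal{O}_X)\oplus \mathrm{H}^i(X,\mathcal{L}^{-1})$, and then everything is read off from the K3 cohomology of $X$ together with Lemma \ref{morethan8}; in particular $h^2(Y,\mathcal{O}_Y)=1$ comes from the vanishing $\mathrm{H}^2(X,\mathcal{L}^{-1})=0$ already established there. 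The paper instead computes $\mathrm{H}^2(Y,\mathcal{O}_Y)$ by Serre duality on $Y$ as $\mathrm{H}^0(Y,\omega_Y)$, then uses $\omega_Y\cong f^*\mathcal{L}$ from \eqref{3} and the projection formula to land in $\mathrm{H}^0(X,\mathcal{L}\oplus\mathcal{O}_X)\cong k$, and obtains $h^1$ from the Euler characteristic of Lemma \ref{chi} (your stated cross-check). What your route buys is that it avoids invoking duality on the possibly singular, non-normal surface $Y$ (harmless here since $Y$ is Gorenstein, but a point one would otherwise have to address); what the paper's route buys is that it exercises the identification $\omega_Y\cong f^*\mathcal{L}$, which is used repeatedly afterwards. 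Your closing remark contrasting $h^1(\mathcal{O}_Y)=\frac{n}{4}-2>0$ with $q(Y)=0$ is consistent with Lemma \ref{q} and correctly anticipates the non-reducedness phenomena exploited later.
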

\begin{proof}
The assertion $\dim {\rm H}^0(Y, {\mathcal O}_Y)= 1$ is clear. Since $f$ is a finite morphism,
we have $R^if_*\omega_Y = 0$ for $i \geq 1$. Therefore, we have
$$
   {\rm H}^2(Y, {\mathcal O}_Y)\cong {\rm H}^0(Y, \omega_Y)\cong 
   {\rm H}^0(X, f_*f^*{\mathcal L}) 
   \cong {\rm H}^0(X, f_*{\mathcal O_Y} \otimes {\mathcal L}) 
   \cong {\rm H}^0(X, {\mathcal L} \oplus {\mathcal O}_X) \cong k.
$$
Finally, by Lemma \ref{chi} we have $\dim {\rm H}^1(Y, {\mathcal O}_Y)=\frac{n}{4} -2$.
\end{proof}
Now, we use the theory of conductrix in \cite[Section 1.3]{CDL}.
Let $\nu: Y' \longrightarrow Y$ be the normalization
and let $\mu : \tilde{Y} \longrightarrow Y'$ be a resolution of singularities:
$$
\begin{array}{ccccc}
\tilde{Y} & \stackrel{\mu}{\longrightarrow} & Y'  & \stackrel{\nu}{\longrightarrow} & Y \\
   &   & f' \searrow &    & \swarrow f \quad \quad\\
      &  &  & X & 
\end{array}
$$
Then, $f' = f\circ \nu$ is a finite and inseparable morphism of degree 2 from a normal surface
onto a smooth surface. Since $Y'$ is normal, $Y'$ is Cohen-Macaulay. Since $X$ is regular,
we see that $f'$ is flat, and so we have an exact sequence
\begin{equation}\label{M}
0 \longrightarrow {\mathcal O}_X \longrightarrow f'_{*}{\mathcal O}_{Y'} \longrightarrow 
{\mathcal M}^{-1} \longrightarrow 0
\end{equation}
with an invertible sheaf ${\mathcal M}$ on $X$.
Therefore, we have a commutative diagram
$$
\begin{array}{ccccccccc}
0 &\rightarrow &{\mathcal O}_X  &\longrightarrow &f_{*}({\mathcal O}_Y) &\longrightarrow&
 {\mathcal L}^{-1} &\rightarrow &0\\
  &     & || & & \downarrow &   & \downarrow & & \\
0& \rightarrow& {\mathcal O}_X &\longrightarrow &f'_{*}({\mathcal O}_{Y'}) &\longrightarrow &
{\mathcal M}^{-1}  &\rightarrow &0.  
\end{array}
$$
Then, there exists a Cartier divisor $A$ such that ${\mathcal M}\cong {\mathcal L}(-A)$ (cf. \cite{CDL}, and also \cite[p.1299]{KK3}).
\begin{definition} 
$A$ is called a conductrix of the covering $f: Y \longrightarrow X$.
\end{definition}
By Lemma \ref{dualizing-sheaf}, we have
\begin{equation}\label{canonical}
\omega_{Y'} \cong f'^{*}{\mathcal M}\cong f'^{*}({\mathcal L}(-A)).
\end{equation}
By the same argument as in \cite[Theorem 3.6]{KK3}, we have the following lemma.
\begin{lemma}\label{B=2A}
      $B = 2A$.
\end{lemma}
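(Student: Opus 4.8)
The plan is to verify $B=2A$ by comparing multiplicities along each prime divisor $D\subset X$: both $B$ and $A$ are effective divisors determined entirely by the local structure of the cover $f\colon Y\to X$, so it suffices to match $\mathrm{mult}_D B$ with $2\,\mathrm{mult}_D A$ for every $D$. The computation is local and rests on the behaviour of the defining equation $z_i^2=f_i$ under Frobenius in characteristic $2$, following the template of \cite[Theorem 3.6]{KK3}.

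First I would reinterpret $B$ in terms of the $1$-form $df$. Since $\eta=df_i/f_i$ and $f_i$ is a local equation for $\sum_{i=1}^{n}E_i$, the divisorial part of $f\eta=df$ satisfies $(df)=\mathrm{div}(f)+(\eta)=\sum_{i=1}^{n}E_i+\bigl(B-\sum_{i=1}^{n}E_i\bigr)=B$. Thus $B=(df)$ is the common divisorial factor of the coefficients of the regular $1$-form $df$, and the task becomes relating the divisorial vanishing of $df$ to the conductrix.

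Next, near the generic point of a prime divisor $D$ I would use the Frobenius decomposition of a regular function: in local coordinates $(x,y)$, with $\{1,x,y,xy\}$ a basis over squares, write $f=a^2+xb^2+yc^2+xyd^2$. Replacing $z$ by $z+a$ shows that the cover depends only on $\bar f:=xb^2+yc^2+xyd^2$, and since $d(a^2)=0$ a direct computation gives
\[
df=d\bar f=(b^2+yd^2)\,dx+(c^2+xd^2)\,dy,
\]
so $B=(df)$ is the common divisor of these two coefficients. The normalization $Y'\to X$, on the other hand, is obtained by extracting the maximal square factor from $\bar f$: if $t$ is a local equation for $D$ and $t^2\mid\bar f$, equivalently $t\mid b,c,d$, then setting $z=tz'$ replaces $\bar f$ by $\bar f/t^2$ and raises $\mathrm{mult}_D A$ by one. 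The key observation is the doubling effect: each such extraction strips a factor $t^2$ from both $b^2+yd^2$ and $c^2+xd^2$, hence a factor $t^2$ from $df$, so it raises $\mathrm{mult}_D B$ by $2$ while raising $\mathrm{mult}_D A$ by $1$.

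Finally I would conclude by using that $Y'$ is normal: once the square extraction is carried out maximally, the residual equation defines a cover with no divisorial singularities, so the associated reduced non-square part has differential with only isolated (codimension-two) zeros, namely $\langle\eta\rangle$. Hence the whole of $B=(df)$ is produced by the extracted square factors, giving $\mathrm{mult}_D B=2\,\mathrm{mult}_D A$ for every $D$ and therefore $B=2A$. The main obstacle is the bookkeeping in the local normalization in characteristic $2$: one must check that the maximal extractable square along $Y'$ matches precisely the conductrix $A$ defined globally through $\mathcal{M}\cong\mathcal{L}(-A)$, and that the residual $1$-form has genuinely no divisorial zeros so that no part of $B$ is overlooked; this is exactly the point where the argument of \cite[Theorem 3.6]{KK3} is invoked.
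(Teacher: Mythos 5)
Your proposal is correct and follows essentially the same route as the paper, which gives no independent argument but simply invokes \cite[Theorem 3.6]{KK3}: the local Frobenius decomposition $f=a^2+xb^2+yc^2+xyd^2$, the identification of $B$ with the divisorial zero locus of $df=(b^2+yd^2)\,dx+(c^2+xd^2)\,dy$, and the observation that each square factor $t^2$ extracted in the normalization contributes $1$ to $\operatorname{mult}_D A$ and $2$ to $\operatorname{mult}_D B$, with normality of $Y'$ guaranteeing that the residual form has no divisorial zeros. The only point you leave implicit is the final case analysis (that $\bar h_x=\bar h_y=0$ in the residue field forces $\bar h$ to be a square, contradicting maximality of the extraction), but that is exactly the content of the cited argument.
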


Now, we consider a resolution of singularities $\mu : \tilde{Y} \longrightarrow Y'$,
including the case that $Y$ has only isolated singular points (In this case, we have $Y' = Y$
and $A = \emptyset$).
We denote by $\cup_{i = 1}^{k}G_i$ the support of the exceptional divisors
of the resolution $\mu$. Here, $G_i$'s are the irreducible components.
Then, by Hidaka--Watanabe \cite[Lemma 1.1]{HW}, we have
$$
\begin{array}{cl}
     \omega_{\tilde{Y}} &\sim \mu^{*}\omega_{Y'} - \sum_{i= 1}^{k}r_iG_i   \quad (r_i \geq 0)\\
              & \sim \mu^{*}(f \circ \nu)^{*}({\mathcal L}(-A)) - \sum_{i= 1}^{k}r_iG_i.
\end{array}
$$
Since $Y$ is non-singular on $\tilde{E}_i$ which is a exceptional curve of the first kind,
$\mu^*\nu^*({\tilde{E}_i})$ is also an exceptional curve of the first kind.
Blowing down these $n$ exceptional curves on ${\tilde{Y}}$, we have a nonsingular surface
${\tilde{Z}}$. We denote the blowing-down by $\phi : \tilde{Y}\longrightarrow \tilde{Z}$,
and we set $\tilde{A} =\phi(\mu^{*}(f \circ \nu)^{*}A)$ and $\tilde{G}_i =\phi(G_i)$.
Then, the twice dualiging sheaf $\omega_{\tilde{Z}}^2$ of ${\tilde{Z}}$ is given by
$$
      \omega_{\tilde{Z}}^2 =\mathcal{O}_{\tilde{Z}}(-2{\tilde{A}} - \sum_{i= 1}^{k}2r_i\tilde{G}_i)
$$
If $Y$ has a non-rational isolated singular point, then $Y'$ also has 
a non-rational isolated singular point and there exists $i$ such that $r_i > 0$
by Hidaka--Watanabe \cite{HW}. Therefore,
if $Y$ has non-isolated singularity or a non-rational singular point,
we have the pluri-genus $P_m({\tilde{Z}}) = 0$ for $m \geq 1$, and
we have $\kappa (\tilde{Z}) = - \infty$.
Since the Kodaira dimension of $\tilde{Y}$ is the same as the one of $\tilde{Z}$,
we see $\kappa (\tilde{Y}) = - \infty$ in this case.
If $Y$ has only rational double points, then ${\tilde{A}}=\emptyset$ and $r_i=0$ $(i= 1, \ldots, k)$.
Therefore, we have 
${\rm H}^{0}(\tilde{Z}, \omega_{\tilde{Z}}^{2\ell}) \cong k$. Thus, we have 
$\kappa (\tilde{Y}) = \kappa (\tilde{Z}) = 0$.
Summarizing these results, we have the following lemma.

\begin{lemma}\label{kappa}
$\kappa (\tilde{Y}) \leq 0$. In particular, one of the following two cases holds:
\begin{itemize}
\item[$({\rm i})$]
If $Y$ has non-isolated singularity or a non-rational singular point, then $\kappa (\tilde{Y}) = - \infty$.
\item[$({\rm ii})$]
If $Y$ has only rational double points, then $\kappa (\tilde{Y}) = 0$.
\end{itemize}
\end{lemma}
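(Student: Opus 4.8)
The plan is to reduce the whole computation to the smooth surface $\tilde{Z}$ obtained above by contracting the exceptional curves of the first kind $\phi \colon \tilde{Y} \to \tilde{Z}$, and to read off the Kodaira dimension from the explicit expression
\[
\omega_{\tilde{Z}}^{2} \cong \mathcal{O}_{\tilde{Z}}\Big(-2\tilde{A} - \sum_{i=1}^{k} 2 r_i \tilde{G}_i\Big)
\]
derived just before the statement. Since $\tilde{Z}$ arises from $\tilde{Y}$ by blowing down exceptional curves of the first kind, and the Kodaira dimension of a smooth projective surface is a birational invariant, I would first record that $\kappa(\tilde{Y}) = \kappa(\tilde{Z})$, so that it suffices to compute $\kappa(\tilde{Z})$.

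Next I would observe that the divisor $D := 2\tilde{A} + \sum_{i=1}^{k} 2 r_i \tilde{G}_i$ is effective, because $\tilde{A}$ is the image of the effective conductrix $A$ and the coefficients $r_i$ are nonnegative by Hidaka--Watanabe. Consequently $\omega_{\tilde{Z}}^{2\ell} \cong \mathcal{O}_{\tilde{Z}}(-\ell D)$ for every $\ell \geq 1$. If $D \neq 0$, then $-\ell D$ is a nonzero anti-effective divisor, so ${\rm H}^{0}(\tilde{Z}, \omega_{\tilde{Z}}^{2\ell}) = 0$; since a nonzero section of $\omega_{\tilde{Z}}^{m}$ would square to a nonzero section of $\omega_{\tilde{Z}}^{2m}$, the vanishing of all even plurigenera already forces $P_m(\tilde{Z}) = 0$ for every $m \geq 1$, that is, $\kappa(\tilde{Z}) = -\infty$. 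If instead $D = 0$, then $\omega_{\tilde{Z}}^{2} \cong \mathcal{O}_{\tilde{Z}}$, so every plurigenus is $0$ or $1$ with $P_2 = 1$, giving $\kappa(\tilde{Z}) = 0$. In both cases $\kappa(\tilde{Y}) \le 0$.

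It then remains to match the two alternatives $D \neq 0$ and $D = 0$ with the geometric dichotomy in the statement. Here I would use that the conductrix $A$ measures the non-normality of $Y$: if $Y$ has only isolated singularities then $Y' = Y$ and $A = \emptyset$, hence $\tilde{A} = \emptyset$, whereas a non-isolated singularity produces $A \neq \emptyset$ and thus $\tilde{A} \neq 0$. For the isolated case I would invoke the Hidaka--Watanabe comparison between the singularity type and the discrepancies $r_i$: rational double points are exactly the crepant singularities, contributing $r_i = 0$ with $\tilde{A} = \emptyset$, while a non-rational isolated singular point on $Y'$ forces some $r_i > 0$. Hence $D \neq 0$ precisely when $Y$ has a non-isolated or a non-rational singular point, yielding case (i), and $D = 0$ precisely when $Y$ has only rational double points, yielding case (ii).

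The main obstacle I anticipate is this last paragraph: cleanly justifying that rational double points leave $\tilde{A}$ empty and contribute $r_i = 0$, whereas every non-rational or non-isolated singularity contributes a strictly positive term to $D$. This is where the precise statements of Hidaka--Watanabe on resolutions of singularities, together with the identification of $A$ via the non-normal locus, do the real work; the remaining ingredients are merely the birational invariance of $\kappa$ and the elementary argument on effective pluricanonical divisors.
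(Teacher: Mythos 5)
Your proposal is correct and follows essentially the same route as the paper: the paper's proof is precisely the paragraph preceding the lemma, which computes $\omega_{\tilde Z}^2\cong\mathcal O_{\tilde Z}(-2\tilde A-\sum_i 2r_i\tilde G_i)$, notes this divisor is effective, and splits into the cases where it is nonzero (all plurigenera vanish, $\kappa=-\infty$) or zero ($\omega_{\tilde Z}^2\cong\mathcal O_{\tilde Z}$, $\kappa=0$), matching the two alternatives to the geometry of the singularities via Hidaka--Watanabe exactly as you do.
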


For the morphism  $\mu : \tilde{Y} \longrightarrow Y'$ 
we consider the Leray spectral sequence. Then, we have an exact sequence
\begin{equation}\label{exact}
    0 \rightarrow {\rm H}^{1}(Y', {\mathcal O}_{Y'}) \rightarrow 
    {\rm H}^{1}(\tilde{Y}, {\mathcal O}_{\tilde{Y}}) \rightarrow 
    {\rm H}^{0}(Y', R^1\mu_{*}{\mathcal O}_{\tilde{Y}}) \rightarrow 
    {\rm H}^{2}(Y', {\mathcal O}_{Y'}) \rightarrow 
    {\rm H}^{2}(\tilde{Y}, {\mathcal O}_{\tilde{Y}}). 
\end{equation}
Therefore, we have
\begin{equation}\label{inequality}
\dim {\rm H}^{1}(Y', {\mathcal O}_{Y'}) \leq \dim {\rm H}^{1}(\tilde{Y}, {\mathcal O}_{\tilde{Y}})
\end{equation}

\begin{remark}
\label{rem:NS}
Rudakov--Shafarevich determined the Picard lattices for supersingular K3 surfaces (\cite{RS}).
Using their list, we  gain some knowledge on the divisibility of divisors by 2 
from the viewpoint of lattice theory.
Below we include a few representations of Picard lattices which will prove useful in the sequel.

\begin{table}[ht!]
\begin{center}
\begin{tabular}{ll}
$\sigma =10$: 
& 
$U(2)\oplus \widetilde{A_1^{20}} = U(2)\oplus E_8(2)\oplus  \widetilde{A_1^{12}}$;
\\
$\sigma =9$: 
&
$U\oplus \widetilde{A_1^{20}} = U\oplus E_8(2)\oplus  \widetilde{A_1^{12}}= U(2)\oplus D_4\oplus \widetilde{A_1^{16}} = U(2)\oplus D_4\oplus  E_8(2)\oplus  \widetilde{A_1^{8}}$;
\\
$\sigma =8$
&
$U(2)\oplus D_4 \oplus D_4\oplus \widetilde{A_1^{12}} = U\oplus D_4\oplus  \widetilde{A_1^{16}}= U\oplus D_4\oplus  E_8(2)\oplus  \widetilde{A_1^{8}}$;
\\
$\sigma =7$
&
$U\oplus D_4 \oplus D_4\oplus \widetilde{A_1^{12}} = U\oplus D_4\oplus  \widetilde{A_1^{8}}\oplus  \widetilde{A_1^{8}}$;
\\
$\sigma=6$: 
&
$U\oplus D_8 \oplus \widetilde{A_1^{12}} = U(2)\oplus D_4\oplus D_8\oplus  \widetilde{A_1^{8}}= U(2)\oplus D_4^5$;
\\
$\sigma=5$:
&
 $U\oplus E_8 \oplus \widetilde{A_1^{12}} = U\oplus D_4\oplus D_8\oplus  \widetilde{A_1^{8}}= U\oplus D_4^5$;
\\
$\sigma=4$: 
&
$U\oplus D_4\oplus E_8\oplus  \widetilde{A_1^{8}}$;
\\
$\sigma=3$: 
&
$U\oplus D_4 \oplus D_8\oplus D_8$;
\\
$\sigma=2$: 
&
$U\oplus D_4\oplus D_8\oplus  E_8$;
\\
$\sigma=1$: 
&
$U\oplus D_4 \oplus E_8^2$\\
~\\
\end{tabular}
\caption{Picard lattices of supersingular K3 surfaces with the Artin invariant $\sigma$}
\label{tab}
\end{center}
\end{table}
Here, we use the following notation.
We denote by $e_i$ a basis of each component of $A_1^{4n}$ and put 
$$
\delta_{4n} = \frac{(e_1 + e_2 +\ldots + e_{4n})}{2}.
$$
Then, $\widetilde{A_1^{4n}}$ is the index two overlattice of $A_1^{4n}$ obtained by adjoining $\delta_{4n}$. 
This lattice is of type {\rm I} in the sense of Rudakov--Shafarevich \cite{RS}, that is, 
its discriminant quadratic form takes values in ${\bf Z}/2{\bf Z}$ (as opposed to $\mathbf Q/2\mathbf Z$).
Along the same lines, the Picard lattice of a supersingular K3 surface is even hyperbolic of type {\rm I}
and thus determined uniquely by the Artin invariant $\sigma$.
\end{remark}

\subsection{Examples}
\label{s:ex}

We give 3 examples (for $n=8,12,16$) which appear naturally in the literature.
Examples for $n=20$ are deferred to Section \ref{s:20}.

\begin{example}[$n = 8$]
\label{ex:8}

We consider a genus one fibration of a Kummer surface in \cite{Ka}
which is constructed from the product of two ordinary elliptic curves.
Then, this has two reducible singular fibers $F_1$, $F_2$ of type ${\rm I}_4^{*}$.
Note that this Kummer surface is an ordinary K3 surface.
Let $F$ be a general fiber.
Let $G_j^{(i)}$ ($j = 1, 2, \ldots, 5$) be the multiple curves of $F_i$, 
and let $H_{k}^{(i)}$ ($k = 1, 2, 3, 4$)
be 4 tails of $F_i$. Since $\sum_{i = 1}^2F_i \sim 2F$, we see
$$
\sum_{i=1}^{2}\sum_{j= 1}^{4}H_{j}^{(i)} \sim 2(F - \sum_{i=1}^{2}\sum_{j= 1}^{5}G_j^{(i)}).
$$
This means that $\sum_{i=1}^{2}\sum_{j= 1}^{4}H_{j}^{(i)}$ is divisible by $2$
in ${\rm Pic}(X)$ and gives an example for $n=8$.
\end{example}

\begin{remark}
Note that the same argument applies to any two non-reduced fibers of some genus one fibration
as they always contain exactly 4 irreducible components of odd multiplicity.
Along the same lines, four non-reduced fibers give rise to a configuration of 16 disjoint $(-2)$-curves
whose sum is 2-divisible ($n=16$), see Example \ref{ex3}.
\end{remark}

\begin{example}[$n = 12$]
\label{ex:12}

Let $Y'$ be the supersingular K3 surface with Artin invariant $\sigma = 1$.
$Y'$ contains 42 $(-2)$-curves (cf. \cite{DK}). There exists Enriques surface $X$
whose canonical covering is a K3-like surface $Y$ (with rational singularities)
which is given by
contracting some 12 $(-2)$-curves on $Y'$:
$$
f : Y \longrightarrow X
$$
(cf. \cite{KK1}). We denote these twelve curves by $E_i$ $(i= 1, 2, \ldots, 12)$.
We consider 12 points of X which are the images of singular points of $Y$ by the morphism $f$.
We call these points canonical points on $X$. 
The image of each of 30 $(-2)$-curves on $Y'$ by $f$ passes through 2 canonical points
among 12 ones (cf. \cite{KK1}). This means that each of 30 $(-2)$-curves which are not contracted intersects just 2 $(-2)$-curves among 12 $(-2)$-curves which are contracted.
The intersections are transversal. We set
$$
     D  = \sum_{i=1}^{12} E_i.
$$
The intersection number of $D$ and each of 30 $(-2)$-curves is equal to $2$,
and $(E_i, D) = -2$. Therefore, the intersection numbers of $D$ and 
the 42 $(-2)$-curves are even integers. Since these 42 $(-2)$-curves generate
${\rm Pic}~(Y)$, we see $\frac{1}{2}D \in {{\rm Pic}(Y)}^*$
(the dual lattice of ${\rm Pic}(Y))$. Since the Artin invariant of $Y$ is $1$,
we have ${\rm Pic}(Y)^*/{\rm Pic}(Y)\cong ({\bf Z}/2{\bf Z})^2$.
The discriminant quadratic form 
$q : {\rm Pic}(Y)^*/{\rm Pic}(Y)\longrightarrow {\bf Q}/2{\bf Z}$
agrees with that of $D_4$ (by the decomposition in Table \ref{tab}),
so assume the value $1$ on any non-zero element.
In comparison, since $D^2 = -24$, we have
$$
q\left(\frac{1}{2}D\right) = -24/4 = -6 \equiv 0~\mbox{mod}~2{\bf Z}.
$$
Therefore, we have $\frac{1}{2}D = 0$ in ${\rm Pic}~(Y)^*/{\rm Pic}~(Y)$,
that is, we see that $\frac{1}{2}D$ is contained in ${\rm Pic}~(Y)$.
\end{example}

\begin{remark}
We will reinterpret this example from a different view-point 
and in greater generality in Section \ref{s:pf}.
\end{remark}

\begin{example}[$n = 16$]
\label{ex3}
\label{ex:16}

We consider a genus one fibration of a supersingular K3 surface in \cite{KS} 
which is constructed as a quotient
of 2 cuspidal curves by the group scheme $\mu_2$. It has 4 reducible singular fibers
$F_i$ ($i = 1,2, 3, 4$) of type ${\rm I}_0^{*}$. Let $F$ be a general fiber.
Let $G_i$ be the curve in the center of $F_i$, and let $H_{j}^{(i)}$ ($j = 1, 2,3, 4$)
be 4 tails of $F_i$. Since $\sum_{i = 1}^4F_i \sim 4F$, we see
$$
\sum_{i=1}^{4}\sum_{j= 1}^{4}H_{j}^{(i)} \sim 2(2F - \sum_{i=1}^{4}G_i).
$$
This means that the divisor $\sum_{i=1}^{4}\sum_{j= 1}^{4}H_{j}^{(i)}$ 
is divisible by $2$ in ${\rm Pic}(X)$ and gives an example for $n=16$.
\end{example}

\section{Case $A = \emptyset$}
In this section we assume $A = \emptyset$
and we calculate the singularities of $Y$. We also examine the property
of the surface $\tilde{Y}$ which is obtained by a resolution of singularities of $Y$.

By the assumption $A = \emptyset$, we have $Y' = Y$.
Since $(\eta) = - \sum_{i}^{n}E_i$, we have
$$
    (\eta)^2 = -2n.
$$
Considering Igusa's formula for $X$,
we have
$$
     c_2(X) = 24 = \deg~ \langle \eta\rangle + (\eta) \cdot \omega_X - (\eta)^2 
     =  \deg \langle \eta\rangle + 2n.
$$
Therefore, we have
\begin{equation}\label{Igusa}
   \deg \langle \eta\rangle = 24 - 2n.
\end{equation}

Similar results to the following lemma are also given in 
Matsumoto \cite{M3}, \cite[Lemma 3.6 and Corollary 3.7]{M1},
but we give here an elementary proof for it.
As for the notation of singularities, see Artin \cite{A}.

\begin{lemma}\label{type}
Let $Q \in Y$ be a singular point and set $f(Q) =P$. Let $t, s$ be a system of parameters
of the local ring $O_P$ at $P$ and let the covering $f : Y \longrightarrow X$ be defined 
by a equation $z^2 = f(t, s)$. If $\dim_k k[[t, s]]/(f_t(t, s), f_s(t, s)) \leq 8$, then 
the type of the singular point $Q$ is one of the following:
$$
    A_1, D_{4}^0, D_{6}^0, E_7^0, D_8^0, E_8^0.
$$
Moreover, the dimensions of $k[[t, s]]/(f_t, f_s)$ over $k$ are given respectively by
$$
      1, 4, 6, 7, 8, 8.
$$
\end{lemma}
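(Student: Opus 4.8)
The plan is to analyze the local structure of the inseparable double cover $f : Y \to X$ at a singular point $Q$, translating the geometry of the singularity into the algebra of the ideal $(f_t, f_s)$ generated by the partial derivatives of the local defining equation. First I would note that since we are in characteristic $2$ and the cover is defined by $z^2 = f(t,s)$, the singular points of $Y$ lie precisely over the locus where $df = f_t\,dt + f_s\,ds$ vanishes, i.e.\ over the support of the isolated-singularity part $\langle \eta\rangle$ of the $1$-form (where $\eta = df/f$). The number $\dim_k k[[t,s]]/(f_t,f_s)$ is exactly the local intersection multiplicity of the two curves $\{f_t=0\}$ and $\{f_s=0\}$, which is the local contribution of $Q$ to $\deg\langle\eta\rangle$. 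So the hypothesis $\dim_k \leq 8$ bounds how degenerate the singularity can be, and the goal is to enumerate which rational double point (RDP) types in characteristic $2$ are compatible with such a bounded invariant.

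The key computational step is to bring $f$ into a normal form by a formal change of coordinates in $k[[t,s]]$. Because we may absorb squares into a redefinition of $z$ (replacing $z$ by $z + g$ changes $f$ by $g^2$, which does not affect the cover or the derivatives in the relevant way), the classification reduces to understanding $f$ modulo squares together with the action of formal automorphisms. I would invoke the Artin–style classification of RDPs in characteristic $2$ (as in Artin \cite{A}), for which the $z^2 = f(t,s)$ presentations and their standard equations are known: the $D_{2m}^0$ and $E_7^0, E_8^0$ types have specific normal forms with vanishing "Dynkin coefficient," and each carries a well-defined Milnor-type number computed by $\dim_k k[[t,s]]/(f_t, f_s)$. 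The plan is to compute this dimension for each candidate normal form directly. For $A_1$ one gets a smooth intersection giving dimension $1$; for $D_4^0$, $D_6^0$, $E_7^0$, $D_8^0$, $E_8^0$ the standard equations yield the claimed values $4,6,7,8,8$ by an explicit but short monomial-basis computation of the quotient ring.

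The main obstacle, and the heart of the argument, will be showing that the list is \emph{exhaustive}: that no other RDP type (nor any worse singularity) can occur once $\dim_k k[[t,s]]/(f_t,f_s) \leq 8$. For this I would argue that any type omitted from the list, for instance $D_m^0$ with $m \geq 10$ or the "non-reduced-coefficient" variants $D_m^r, E_k^r$ with $r>0$, forces the colength to exceed $8$; this follows because a higher coindex of the singularity forces the partials to share a larger common factor or higher-order tangency, pushing up the intersection multiplicity. The delicate point is handling the odd cases $D_{2m+1}$ and the split/nonsplit distinctions peculiar to characteristic $2$: one must verify that such types simply cannot arise from a \emph{purely inseparable} $\mu_2$- or $\alpha_2$-type cover of this shape, or else are ruled out by the dimension bound. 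I expect to organize this as a finite case check driven by the Newton polygon of $f$, where the low-weight part of $f$ determines the type and a Tjurina/Milnor-number estimate caps the admissible cases, with the elementary proof promised in the statement amounting to these explicit colength computations rather than invoking the full deformation theory.
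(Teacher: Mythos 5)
Your overall strategy -- absorb squares into $z$, then classify $f$ by its low-order part and compute the colength $\dim_k k[[t,s]]/(f_t,f_s)$ in each case -- is indeed the route the paper takes, but as written the plan has two concrete gaps at exactly the point you identify as the heart of the matter. First, the logic of "invoke the Artin classification of RDPs in characteristic $2$ and compute the colength of each normal form" is partly backwards: the hypothesis of the lemma is only a bound on the colength, not that $Q$ is a rational double point, so you cannot obtain exhaustiveness by running through a pre-existing list of RDP normal forms. The rationality of $Q$ is a \emph{consequence} of the lemma (it is recorded separately as Corollary \ref{rationality}), so the argument must go the other way: starting from an arbitrary $f$ with colength $\le 8$, one has to \emph{derive} one of the normal forms. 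Your fallback ("a higher coindex forces the partials to share a larger common factor") is an intuition, not an argument, and is not how the reduction actually proceeds.

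Second, the device that makes the finite case check feasible is missing from your plan. In characteristic $2$ one writes $f$ uniquely as $f = f_1^2t + f_2^2s + f_3^2ts + f_4^2$ with $f_i\in k[[t,s]]$, kills $f_4$ by $z\mapsto z+f_4$, and then the partials take the very rigid form $f_t = f_1^2 + f_3^2 s$, $f_s = f_2^2 + f_3^2 t$. The colength bound now translates directly into degree bounds on $f_1,f_2,f_3$: e.g.\ if $f_3$ is a non-unit and $\deg f_1,\deg f_2\ge 2$, one exhibits nine explicit monomials independent in the quotient, forcing the cubic part of $f$ to be nonzero; that cubic has only three normal forms ($ts(t+s)$, $t^2s$, $t^3$), and within each a further degree analysis of the $g_i$ pins down $D_4^0$, $D_6^0/D_8^0$, $E_7^0/E_8^0$ together with the stated colengths. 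Without this decomposition your "Newton polygon case check" is not actually set up. A minor additional remark: the variants $D_m^r, E_k^r$ with $r>0$ are excluded automatically because their equations contain a $z$-linear term and hence never arise from a cover of the shape $z^2=f(t,s)$; this requires no verification against the colength bound.
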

\begin{proof}
In $k[[t, s]]$, $f(t, s)$ is written as
$$
  f(t, s) = f_1^2t + f_2^2s + f_3^2ts + f_4^2 \quad (f_1, f_2, f_3, f_4 \in k[[t, s]]).
$$
By the change of coordinates : $z \mapsto z + f_4$, we may assume $f_4 = 0$.
Since $Q$ is a singular point, we have $\deg ~f_1 \geq 1$ and $\deg ~f_2 \geq 1$.
If $f_3$ is a unit, then the lowest term of $f(t,s)$ is $ats$ with $a \in k, a\neq 0$.
Therefore, $P$ is of type $A_1$ and $\dim_k k[[t, s]]/(f_t, f_s) = 1$. 

Let $\deg~ f_3 \geq 1$.
Suppose that $\deg ~f_1 \geq 2$ and $\deg ~f_2 \geq 2$.
Then, $1,t, s, t^2, ts, s^2$, two monomials of degree 3 and one monomial of degree 4
are linear independent in $k[[t, s]]/(f_t, f_s)$. Therefore, we have
$\dim_k k[[t, s]]/(f_t, f_s)\geq 9$, a contradiction.
Therefore, at least one of $f_1$ and $f_2$ is of degree 1, and
we have $\deg~ f(t, s) = 3$.

The terms of degree three of $f(t, s)$ is expressed as
$$
a_3t^3 + a_2t^2s + a_1 ts^2 + a_0s^3 = b(t + \alpha_1s)(t + \alpha_2s)(t + \alpha_3s)
$$
with $a_0, a_1, a_2, a_3, b, \alpha_1, \alpha_2, \alpha_3 \in k$. Therefore,
by a suitable change of coordinates, $f(t, s)$ is expressed as one of the following
three normal forms:
\begin{itemize}
\item[$({\rm i})$] $f(t, s) = ts(t+ s) + g_1^2t + g_2^2s + g_3^2ts$,
\item[$({\rm ii})$] $f(t, s) = t^2s + g_1^2t + g_2^2s + g_3^2ts$,
\item[$({\rm iii})$] $f(t, s) = t^3 + g_1^2t + g_2^2s + g_3^2ts$.
\end{itemize}
Here, $g_i \in k[[t, s]]$ $(i = 1,2, 3)$ with $\deg~ g_1 \geq 2$, $\deg~g_2 \geq 2$ and 
$\deg~ g_3 \geq 1$.

In Case $({\rm i})$, $P$ is of type $D_4^{0}$ and $\dim_k k[[t, s]]/(f_t, f_s)= 4$. 

For Case $({\rm ii})$, we have $f_t = g_1^2 + g_3^2s$ and $f_s = t^2 + g_2^2 + g_3^2t$.
Let $\deg ~g_3 = 1$ and $g_3 = a_1t + a_2s +$ higher terms
with $a_1, a_2 \in k$. If $a_2 = 0$, then 
then $ts^i$ $(i \geq 0)$ are linearly independent in $k[[t, s]]/(f_t, f_s)$,
a contradiction. Therefore, we have $a_2 \neq 0$.
By the change of coordinates
$$
        t \mapsto 1, s \mapsto a_1t + a_2s,
$$
$f(t, s)$ is transformed into $t^2s + ts^3 +$ higher terms. This gives 
the singularity of type $D_6^0$ and $\dim_k k[[t, s]]/(f_t, f_s)= 6$.
In case $\deg ~g_3 \geq 2$, to get $\dim_k k[[t, s]]/(f_t, f_s)\leq 8$
we have $\deg~ g_1 = 2$, say $g_1 = a_1t^2 + a_2 ts + a_3s^2 +$ higher terms
with $a_1, a_2, a_3 \in k$.
If $a_3 = 0$, then $\dim_k k[[t, s]]/(f_t, f_s) = \infty$ in a similar reason
to the above and so $a_3 \neq 0$.
We consider the automorphism of $k[[t, s]]$ defined by
$$
     t \mapsto t, s \mapsto s + a_1^2t^3 + a_2^2ts^2.
$$
Then, $f(t, s)$ is transformed into $t^2s + a_3^2ts^4 +$ higher terms. This gives 
the singularity of type $D_8^0$ and $\dim_k k[[t, s]]/(f_t, f_s)= 8$.

For  Case $({\rm iii})$,
we have $f_t = t^2 + g_1^2 + g_3^2s$ and $f_s = g_2^2 + g_3^2t$.
If $\deg~ g_2 \geq 3$ and $\deg~ g_3 \geq 2$, then
$1, t, s, ts, s^2, ts^2, s^3, s^4, s^5$ are linearly independent in $k[[t, s]]/(f_t, f_s)$. Therefore, we have
$\dim_k k[[t, s]]/(f_t, f_s)\geq 9$, a contradiction.
Therefore, we have either $\deg~ g_2 = 2$ or $\deg ~g_3 = 1$.
If $\deg~ g_3 = 1$, by change of coordinates we have
$f(t,s) = t^3 + s^3t +$ higer terms. This gives of type $E_7^0$ and
$\dim_k k[[t, s]]/(f_t, f_s)= 7$. If $\deg ~g_2 = 2$ and $\deg~ g_3 \geq 2$,
we have $g_2 = a_1t^2 + a_2ts + a_3s^2 + $ higher terms with $a_1, a_2, a_3 \in k$.
If $a_3 = 0$, then $ts^i$ $(i \geq 0)$ are linearly independent in $k[[t, s]]/(f_t, f_s)$,
a contradiction. Therefore, we have $a_3 \neq 0$.
We consider the automorphism of $k[[t, s]]$ defined by
$$
     t \mapsto t + a_1^2t^2s + a_2^2s^3, s \mapsto \sqrt[5]{a_3}s.
$$
Then, $f(t, s)$ is transformed into $t^3 + s^5 +$ higher terms. This gives 
the singularity of type $E_8^0$ and $\dim_k k[[t, s]]/(f_t, f_s)= 8$.
Thus, we complete our proof.
\end{proof}



\begin{corollary}\label{rationality}
Under the assumptions in Lemma \ref{type}, the singular point $Q$ is
a rational double point.
\end{corollary}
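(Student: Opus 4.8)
The plan is to deduce rationality directly from the classification already carried out in Lemma~\ref{type}, since the six symbols $A_1, D_4^0, D_6^0, E_7^0, D_8^0, E_8^0$ are exactly Artin's notation \cite{A} for a family of rational double points in characteristic $2$. In other words, the substantive work is finished once the local equation $z^2=f(t,s)$ has been brought to normal form, and it remains only to match each such form against Artin's list and to quote his theorem that every entry on that list is rational.

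Concretely, the proof of Lemma~\ref{type} shows that, after the admissible coordinate changes, $f(t,s)$ has leading part
\[
ts,\quad ts(t+s),\quad t^2s+ts^3,\quad t^3+ts^3,\quad t^2s+ts^4,\quad t^3+s^5
\]
in the six respective cases, and these are precisely Artin's defining equations for $A_1, D_4^0, D_6^0, E_7^0, D_8^0, E_8^0$. First I would recall from \cite{A} the list of normal forms $z^2=g(t,s)$ defining the characteristic-$2$ rational double points, together with the dimensions of the associated algebras $k[[t,s]]/(g_t,g_s)$; the values $1,4,6,7,8,8$ computed in Lemma~\ref{type} then single out exactly the above six entries. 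Matching the leading form together with this numerical invariant identifies $Q$ with the corresponding point in Artin's classification, and rationality follows from Artin's theorem.

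The one point that needs care is the effect of the higher-order terms retained in the normal forms of Lemma~\ref{type}: I must check that they do not change the formal isomorphism type. I would argue this by finite determinacy, the finiteness of $\dim_k k[[t,s]]/(f_t,f_s)$ guaranteeing that the tail can be removed by a further formal coordinate change. This determinacy step is the main obstacle, because the usual characteristic-zero bounds are not automatic in characteristic $2$; the danger is concentrated in the cases $D_8^0$ and $E_8^0$, where the invariant attains its maximal value $8$. Should the determinacy estimate prove awkward there, the safe alternative is to verify rationality by hand: resolve each of the six singularities by successive blow-ups, check that the exceptional configuration is the Dynkin diagram of the named type consisting of $(-2)$-curves, so that the fundamental cycle $Z$ satisfies $p_a(Z)=0$, and conclude $R^1\pi_{*}\mathcal{O}=0$ by Artin's rationality criterion. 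Either route yields that $Q$ is a rational double point.
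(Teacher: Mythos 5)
Your proposal is correct and matches the paper's (implicit) argument: the paper treats this corollary as immediate from Lemma~\ref{type}, since the six types $A_1, D_4^0, D_6^0, E_7^0, D_8^0, E_8^0$ appearing there are precisely entries in Artin's list of rational double points in characteristic~$2$. Your additional worries about finite determinacy really concern the proof of Lemma~\ref{type} itself (which already asserts the singularity type), not the deduction of the corollary, so they are not needed here.
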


\begin{lemma}\label{non-rational}
Assume $A = \emptyset$. Then, $Y$ has no non-rational singularity.
\end{lemma}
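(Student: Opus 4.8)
The plan is to derive the statement directly from the count of isolated singularities of the one-form $\eta$ furnished by Igusa's formula (\ref{Igusa}), combined with the local classification already obtained in Lemma \ref{type}. The point is that, once $A=\emptyset$, every singular point of $Y$ is isolated and its local contribution to $\langle\eta\rangle$ is exactly the colength $\dim_k k[[t,s]]/(f_t,f_s)$ that Lemma \ref{type} controls.

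First I would use (\ref{Igusa}): since $\deg\langle\eta\rangle = 24-2n\geq 0$, only $n\leq 12$ is possible, so in the present case $n=8$ or $n=12$. Next I would identify the singular locus of $Y$ with the support of $\langle\eta\rangle$. Because $A=\emptyset$ we have $Y'=Y$, so $Y$ is normal and has only isolated singular points. In local coordinates $t,s$ at a point $P=f(Q)\notin\sum_i E_i$ the cover is $z^2=f(t,s)$, and in characteristic $2$ the total space is singular exactly along $\{f_t=f_s=0\}$; isolatedness means this ideal has finite colength, and away from $\sum_i E_i$ (where $\langle\eta\rangle$ is already known to be supported) the form $\eta=df/f$ has a zero of multiplicity $\dim_k k[[t,s]]/(f_t,f_s)$ there. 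Hence
\[
\sum_{Q}\dim_k k[[t,s]]/(f_t,f_s) \;=\; \deg\langle\eta\rangle \;=\; 24-2n,
\]
where the sum runs over the singular points $Q$ of $Y$.

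Then I would finish case by case. If $n=12$ the total is $0$, so $Y$ is smooth and the assertion is vacuous. If $n=8$ the total is $8$, and since each singular point contributes a positive integer, at every singular point $Q$ we get $\dim_k k[[t,s]]/(f_t,f_s)\leq 8$. Thus Lemma \ref{type} applies and shows $Q$ is of type $A_1$, $D_4^0$, $D_6^0$, $E_7^0$, $D_8^0$ or $E_8^0$; by Corollary \ref{rationality} each of these is a rational double point. Therefore $Y$ has no non-rational singularity.

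The step needing the most care is the bookkeeping in the middle paragraph: one must verify that in characteristic $2$ the singular points of the purely inseparable cover are precisely the isolated zeros of $df$, that these coincide with the support of $\langle\eta\rangle$ with the stated multiplicity, and that normality (which is exactly what $A=\emptyset$ provides via $Y'=Y$) forces all these singularities to be isolated, so that the global number $24-2n$ bounds each individual colength. Once this identification is secured, the restriction $n\leq 12$ and the bound $\leq 8$ make Lemma \ref{type} and Corollary \ref{rationality} immediately conclusive.
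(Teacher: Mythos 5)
Your proof is correct, and its second half coincides with the paper's: for $n=12$ one gets $\deg\langle\eta\rangle=0$ so $Y$ is smooth, and for $n=8$ the total degree $8$ bounds each local colength $\dim_k k[[t,s]]/(f_t,f_s)$ by $8$, so Lemma \ref{type} and Corollary \ref{rationality} show every singular point is a rational double point. Where you genuinely diverge is in how you reduce to $n\in\{8,12\}$. You extract this directly from the effectivity of the isolated singularity cycle: $\deg\langle\eta\rangle=24-2n\geq 0$ since $\langle\eta\rangle$ is a sum of non-negative local colengths, hence $n\leq 12$ unconditionally. The paper instead argues by contradiction: assuming a non-rational singular point, Lemma \ref{kappa}(i) gives $\kappa(\tilde Y)=-\infty$, so (with $q(\tilde Y)=0$) $\tilde Y$ is rational and ${\rm H}^1(Y,\mathcal O_Y)=0$ by (\ref{inequality}), and then Lemma \ref{chi} forces $n=8$ or $12$. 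Your route is more elementary and actually proves the stronger unconditional statement that $A=\emptyset$ implies $n\leq 12$ (which the paper only records later, in Proposition \ref{rational}); the effectivity of $\langle\eta\rangle$ you rely on is already used implicitly by the paper in the $n=8$ step and again in the $A\neq\emptyset$ section, so nothing foreign is imported. The bookkeeping you single out --- that over points off $\sum_i E_i$ the cover $z^2=f$ with $f$ a unit is singular exactly where $f_t=f_s=0$, that $\langle\eta\rangle$ contributes the colength there, and that $Y$ is smooth over $\sum_i E_i$ --- is precisely the identification the paper sets up after constructing $\eta$ and uses silently in its own proof, so your argument closes.
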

\begin{proof}
Any singularities are double points by the definition of $Y$.
Suppose that $Y$ has a non-rational singularity.
Then, we have
$\kappa (\tilde{Y}) = -\infty$, and by $q(\tilde{Y}) = q(Y) = 0$,
$\tilde{Y}$ is a rational surface. Therefore, $X$ is a unirational K3 surface.
By (\ref{inequality}) we have $\dim {\rm H}^{1}(Y, {\mathcal O}_{Y}) = 0$.
Since $\dim {\rm H}^{2}(Y, {\mathcal O}_{Y}) = 0$ or $1$ by (\ref{4}), 
we see $n = 8$ or $12$ by Lemma \ref{chi}.
If $n = 12$, by (\ref{Igusa}) we have $\deg \langle \eta\rangle = 0$.
Therefore, $Y$ is nonsingular. 
If $n = 8$, we have 
$\deg \langle \eta\rangle = 8$. 
Therefore, by Corollary \ref{rationality} the singularities of $Y$ are all rational.
\end{proof}

\begin{proposition}\label{rational}
If $A = \emptyset$, then $\tilde{Y}$ is birational to either a K3 surface or an Enriques
surface,
 and the following results hold:
\begin{itemize}
\item[$({\rm i})$] If $\tilde{Y}$ is birational to a K3 surface, then $n = 8$ and $Y$ has some rational double points.
\item[$({\rm ii})$] If $\tilde{Y}$ is birational to an Enriques surface, then $n = 12$ and $Y$ is the nonsingular blow-up
of a classical or supersingular Enriques surface. 
Moreover, $X$ is  unirational (therefore, supersingular).
\end{itemize}
\end{proposition}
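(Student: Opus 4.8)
The plan is to read off the birational type of $\tilde Y$ from the invariants already computed, and then to separate the two cases using the degree of $\langle\eta\rangle$ together with the inseparability of $f$.

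First I would note that $A=\emptyset$ forces $Y=Y'$ to be normal, so that by Lemma~\ref{non-rational} every singularity of $Y$ is a rational double point. Hence Lemma~\ref{kappa}(ii) gives $\kappa(\tilde Y)=0$, while $q(\tilde Y)=q(Y)=0$ by Lemma~\ref{q}; and since rational double points leave $\chi(\mathcal O)$ unchanged, Lemma~\ref{chi} together with Corollary~\ref{cohomologyY} yields the birational invariant $\chi(\mathcal O_{\tilde Y})=\chi(\mathcal O_Y)=4-n/4$. By the classification of surfaces of Kodaira dimension zero in characteristic $2$ (Bombieri--Mumford), the minimal model of $\tilde Y$ is abelian, (quasi-)bielliptic, K3, or Enriques; the first two are excluded by $q=0$. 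As $\chi(\mathcal O)=2$ for a K3 surface and $\chi(\mathcal O)=1$ for an Enriques surface, we must have $4-n/4\in\{1,2\}$, so $n=8$ (the K3 case) or $n=12$ (the Enriques case); in particular $n=16,20$ cannot occur when $A=\emptyset$. This already establishes the dichotomy asserted in the first sentence.

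For case (i) I would invoke Igusa's formula (\ref{Igusa}): with $n=8$ it reads $\deg\langle\eta\rangle=8>0$, so $\eta$ has isolated singularities and $Y$ is genuinely singular, its singularities being rational double points by Corollary~\ref{rationality}. For case (ii), $n=12$ gives $\deg\langle\eta\rangle=0$, so $\eta$ has no isolated singularity and $Y$ is smooth; contracting the twelve disjoint $(-1)$-curves $\tilde E_i$ produces a smooth surface $Z$ with $\omega_Z^{\otimes 2}\cong\mathcal O_Z$ and $\chi(\mathcal O_Z)=1$, that is, an Enriques surface, of which $Y$ is the nonsingular blow-up in twelve points.

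The substantive part is to pin down the type of $Z$ and to prove unirationality, and this is where I expect the main difficulty. I would reuse the dominant purely inseparable morphism $X^{(1/2)}\to Y$ from the proof of Lemma~\ref{q}: composing with the contraction $Y\to Z$ and using $k(Y)=k(Z)$, it descends to a purely inseparable degree-$2$ cover $X^{(1/2)}\to Z$ of the Enriques surface $Z$ by the K3 surface $X^{(1/2)}$. Such a cover is necessarily the canonical cover of $Z$, and by the structure theory of Enriques surfaces in characteristic $2$ (classification by $\mathrm{Pic}^\tau$; cf.\ \cite{CDL}, \cite{KK1}) the canonical cover is purely inseparable exactly for the classical and supersingular types, whereas the singular (ordinary) type has an \emph{\'etale} canonical cover. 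Hence the inseparability of $X^{(1/2)}\to Z$ excludes the ordinary type, leaving $Z$ classical or supersingular. For unirationality I would invoke that the canonical cover of a classical or supersingular Enriques surface is a supersingular, hence unirational, K3 surface (cf.\ \cite{CDL}); as this cover is $X^{(1/2)}$, we conclude that $X^{(1/2)}$, and therefore $X$, is unirational, and a unirational K3 surface is supersingular by Artin--Shioda. The delicate inputs I would need to verify carefully are the identification of $X^{(1/2)}\to Z$ with the canonical cover and the consequent exclusion of the ordinary type, both of which rest on the characteristic-$2$ theory of Enriques surfaces rather than on the elementary computations of the preceding sections.
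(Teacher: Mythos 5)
Your reduction to $n\in\{8,12\}$ and the split into the two cases is essentially the paper's own argument: $A=\emptyset$ together with Lemma \ref{non-rational} leaves only rational double points, Lemma \ref{kappa}(ii) and $q(\tilde Y)=0$ force the minimal model to be K3 or Enriques, $\chi(\mathcal{O})=4-n/4$ gives $n=8$ or $12$, and $\deg\langle\eta\rangle$ distinguishes a singular $Y$ (for $n=8$) from a smooth one (for $n=12$). The gap sits exactly where you flag it: the exclusion of the singular (ordinary) Enriques type. First, the composite $X^{(1/2)}\to Y\to Z$ is not a finite cover of $Z$ --- it contracts the twelve curves lying over the $\tilde E_i$ --- so ``purely inseparable degree-2 cover of $Z$'' only makes sense after Stein factorization, which yields a K3 surface with rational double points over $Z$, not $X^{(1/2)}$ itself. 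More seriously, the assertion that this cover ``is necessarily the canonical cover of $Z$'' is precisely the nontrivial content and is left unproved; one would have to show that the invertible sheaf $\mathcal{L}$ with $\pi_*\mathcal{O}_V\cong\mathcal{O}_Z\oplus\mathcal{L}$ equals $\omega_Z$ (the alternative $\mathcal{L}\cong\mathcal{O}_Z$ being excluded by integrality of $V$) and then invoke uniqueness of the canonical cover. Your supporting claim that the canonical cover of a classical or supersingular Enriques surface is a supersingular K3 surface is moreover false in general --- such covers are typically non-normal or rational; here the cover happens to be birational to the K3 surface $X^{(1/2)}$, but that is an input to the argument, not a consequence of the Enriques classification.

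The paper sidesteps all of this with a softer argument that you could adopt: if $Z$ were a singular Enriques surface, it would admit an \'etale degree-2 cover $g:W\to Z$ by a K3 surface $W$; pulling this back along the purely inseparable morphism $f^{(1/2)}:X^{(1/2)}\to Y$ gives an \'etale double cover of the K3 surface $X^{(1/2)}$, which splits by simple connectedness, producing an isomorphism $X^{(1/2)}\cong W$ compatible with the maps down to $Y$ --- impossible, since $g$ is separable while $f^{(1/2)}$ is purely inseparable. Unirationality of $X$ then follows simply from Blass's theorem applied to the classical or supersingular Enriques surface $Z$ together with the dominant morphisms $Z\dashleftarrow Y\to X$, with no appeal to properties of the canonical cover.
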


\begin{proof}
By $A = \emptyset$, $Y$ has no non-isolated singularity
and by Lemma \ref{non-rational} the singularities are all rational if 
some singular points exist.
If $Y$ has only rational singularities, then by Lemma \ref{kappa} and 
$q(\tilde{Y}) = q(Y) = 0$, we see that the minimal model $\hat Y$ of $\tilde{Y}$ is either a K3 surface 
or an Enriques surface. 
If $\hat{Y}$ is a K3 surface,
then we have $\chi ({\mathcal O}_Y)= \chi ({\mathcal O}_{\tilde{Y}}) = \chi(\mathcal O_{\hat Y}) = 2$.
Therefore, we have $n=8$ by Lemma \ref{chi} and $\deg~ \langle \eta\rangle = 8$.
Thus, $Y$ has some rational singular points.

If $\hat{Y}$ is an Enriques surface,
then we have $\chi ({\mathcal O}_Y)= \chi ({\mathcal O}_{\tilde{Y}}) = \chi(\mathcal O_{\hat Y}) = 1$. 
Therefore, we have $n=12$ by Lemma \ref{chi}. 
As we have seen in the proof of Lemma \ref{non-rational},
this implies that
$Y$ is nonsingular
and $Y = Y'  = \tilde{Y}$. 

Suppose that $\hat{Y}$ is a singular Enriques surface.
Equivalently, there exist a K3 surface W
and an \'etale covering $g : W \longrightarrow \hat Y$. On the other hand,
we have a dominant rational map
$f^{(1/2)} : X^{(1/2)} \longrightarrow Y$.
By our construction of $Y$, $f^{(1/2)}$ is a morphism. We consider a fiber product:
$$
\begin{array}{ccc}
           W \times_{Y}X^{(1/2)}   &   \stackrel{\tilde{g}}{\longrightarrow}    & X^{(1/2)}\\
          \downarrow &             & \quad \downarrow f^{(1/2)}\\
         W & \stackrel{g}{\longrightarrow} & Y 

\end{array}
$$
Since $g$ is an \'etale morphism, $\tilde{g}$ is also an \'etale morphism.
Since a K3 surface is simply connected, $W \times_{Y}X^{(1/2)}$ splits into
two $X^{(1/2)}$'s. Therefore, we have an isomorphism $X^{(1/2)} \cong W$, which is compatible
with the diagram. However, $g$ is separable and $f^{(1/2)}$ is purely inseparable,
a contradiction. Hence, $\hat Y$ is either classical or supersingular.
Since both classical and supersingular Enriques surfaces are unirational (Blass \cite{B}),
we conclude that $X$ is unirational. 
\end{proof}

\begin{corollary}\label{cor:n=8}
Without the assumption $A = \emptyset$, if $X$ is not supersingular ($\rho(X)<22$)
and $\sum_{i=1}^{n}E_i$ is divisible by $2$, then we have $n=8$.
\end{corollary}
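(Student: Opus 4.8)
The plan is to reduce the general situation to the already-settled case $A=\emptyset$ by exploiting the fact that a unirational K3 surface is necessarily supersingular. Recall from Corollary \ref{8,12,16,20} that $n\in\{8,12,16,20\}$, so it suffices to exclude $n=12,16,20$ under the standing hypothesis $\rho(X)<22$. I would keep the purely inseparable degree-two cover $f\colon Y\to X$ and its resolution $\tilde{Y}$ from the preceding sections, and separate the two alternatives of Lemma \ref{kappa}.

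First I would dispose of the alternative in which $Y$ has a non-isolated singularity or a non-rational singular point. Then Lemma \ref{kappa}(i) gives $\kappa(\tilde{Y})=-\infty$, while $q(\tilde{Y})=q(Y)=0$ by Lemma \ref{q}. Since all plurigenera of $\tilde{Y}$ vanish, Castelnuovo's rationality criterion (valid in every characteristic) forces $\tilde{Y}$ to be a rational surface. As $f$ is finite and dominant and the resolution $\tilde{Y}\to Y$ is birational, $X$ is dominated by the rational surface $\tilde{Y}$, so $X$ is unirational and hence supersingular with $\rho(X)=22$. This contradicts the hypothesis, and the alternative is excluded.

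It follows that we are in the second alternative of Lemma \ref{kappa}: $Y$ has only rational double points. In particular every singularity of $Y$ is isolated, so $Y$ is normal, $Y'=Y$, and the conductrix $A$ is empty. Proposition \ref{rational} then applies and leaves exactly two possibilities: either $\tilde{Y}$ is birational to a K3 surface, in which case $n=8$, or $\tilde{Y}$ is birational to an Enriques surface, in which case Proposition \ref{rational}(ii) shows $X$ is unirational, again contradicting $\rho(X)<22$. Only the first possibility survives, so $n=8$.

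The crux of the argument is the middle step, namely transporting the presence of any non-isolated or non-rational singularity on $Y$ into the unirationality, hence supersingularity, of $X$. Two inputs must be handled with care in characteristic $2$: that $\kappa(\tilde{Y})=-\infty$ together with $q(\tilde{Y})=0$ yields a genuinely rational (not merely ruled) surface, and that unirationality of a K3 surface forces $\rho=22$. Once these are secured, the remainder is bookkeeping that funnels every case other than $n=8$ into a violation of $\rho(X)<22$ via the already-established Proposition \ref{rational}.
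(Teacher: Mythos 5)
Your proposal is correct and follows essentially the same route as the paper, whose proof of this corollary is just the citation ``This follows from Proposition \ref{rational} and Lemma \ref{kappa} (i)''; you have simply unpacked that citation, using Lemma \ref{q}, Castelnuovo's criterion and the fact that a unirational K3 surface is supersingular exactly as the paper does implicitly (and explicitly in the proofs of Lemma \ref{non-rational} and Proposition \ref{rational}). No gaps.
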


\begin{proof}
This follows from Proposition \ref{rational} and Lemma \ref{kappa} (i).
\end{proof}

\begin{corollary}
Assume $A = \emptyset$ and $n=8$.
The types of singular points of $Y$ are one of the following:
$$
8 {\rm A}_1, 4 {\rm A}_1 + D_4^0, 2 {\rm A}_1 + D_6^0,
{\rm A}_1 + E_7^0, 2{\rm D}^0_4, {\rm D}^0_8, {\rm E}^0_8.
$$
\end{corollary}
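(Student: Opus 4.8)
The plan is to convert the global count of isolated singularities given by Igusa's formula into a local constraint at each singular point of $Y$, apply the classification of Lemma \ref{type}, and then read off all admissible configurations by a short combinatorial enumeration. Since $A=\emptyset$, Lemma \ref{B=2A} gives $B=2A=\emptyset$, so $(\eta)=-\sum_{i=1}^{n}E_i$ and, as already observed, no point of $\langle\eta\rangle$ lies on $\sum_iE_i$. For $n=8$, formula (\ref{Igusa}) yields $\deg\langle\eta\rangle=24-2n=8$.

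First I would localize this degree. At a singular point $Q$ of $Y$ over $P=f(Q)$, let $t,s$ be parameters of $\mathcal O_P$ and write the cover locally as $z^2=f(t,s)$ as in Lemma \ref{type}. The square-shifts $z\mapsto z+h$ employed there leave $df=f_t\,dt+f_s\,ds$ unchanged, so $f_t,f_s$ agree with the partials of a local equation $g$ of $\sum_iE_i$ at $P$; this $g$ is a unit at $P$ because $P\notin\sum_iE_i$, so $\eta=dg/g$ has its isolated zero scheme at $P$ cut out precisely by $(g_t,g_s)=(f_t,f_s)$. As $B=\emptyset$ forces $f_t,f_s$ to share no common factor, the local multiplicity of $\langle\eta\rangle$ at $P$ equals $\dim_k k[[t,s]]/(f_t,f_s)$, and summing over the singular points of $Y$ gives
\[
\sum_{Q\in\mathrm{Sing}(Y)}\dim_k k[[t,s]]/(f_t,f_s)=\deg\langle\eta\rangle=8 .
\]

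Each summand is at least $1$, since a genuine singularity forces $f_t,f_s\in\mathfrak m_P$; hence each is at most $8$ and Lemma \ref{type} applies at every singular point. By Corollary \ref{rationality} (with Lemma \ref{non-rational}) all of them are rational double points, and the only admissible pairs of (type, local dimension) are
\[
(A_1,1),\;(D_4^0,4),\;(D_6^0,6),\;(E_7^0,7),\;(D_8^0,8),\;(E_8^0,8).
\]
It remains to list the partitions of $8$ into parts from $\{1,4,6,7,8\}$, namely $8$, $7+1$, $6+1+1$, $4+4$, $4+1+1+1+1$ and $1+\dots+1$. These translate, respectively, into $D_8^0$ or $E_8^0$, $E_7^0+A_1$, $D_6^0+2A_1$, $2D_4^0$, $D_4^0+4A_1$ and $8A_1$, exactly the seven configurations asserted.

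I expect the one genuinely substantive point to be the local-to-global identification in the displayed equation: one must be certain that the length $\dim_k k[[t,s]]/(f_t,f_s)$ of Lemma \ref{type} is precisely the local multiplicity of $\langle\eta\rangle$ at $P$, and that every singular point of $Y$ is counted and lies off $\sum_iE_i$. Both follow from the Rudakov--Shafarevich description of the isolated singularities of a rational one-form once $B=\emptyset$ is known; granting this, the remainder is the arithmetic enumeration above.
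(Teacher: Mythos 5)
Your proposal is correct and follows exactly the paper's route: the paper's own proof is the one-liner ``since $\deg\langle\eta\rangle=8$, this follows from Lemma \ref{type},'' and you have simply made explicit the local-to-global identification of $\dim_k k[[t,s]]/(f_t,f_s)$ with the local multiplicity of $\langle\eta\rangle$ and the enumeration of partitions of $8$ into parts from $\{1,4,6,7,8\}$. No discrepancy to report.
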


\begin{proof}
Since we have $\deg \langle \eta \rangle = 8$,
this proposition follows from Lemma \ref{type}.
\end{proof}

\section{Case $A \neq \emptyset$}
In this section we assume $A \neq \emptyset$. We construct a vector field $D$ on $X$
and by using the quotient surface $X^D$ we examine the structure of $X$.

By (\ref{canonical}) we have 
${\rm H}^{2}(Y', {\mathcal O}_{Y'})= 0$.
Since $\kappa(\tilde{Y}) = -\infty$ by Lemma \ref{kappa} (i), we have
$$
\dim {\rm H}^1(\tilde{Y}, \mathcal{O}_{\tilde{Y}}) = q(\tilde{Y}) = q(Y) = 0.
$$
Therefore, by (\ref{inequality}), we have
$$
 {\rm H}^{1}(Y', {\mathcal O}_{Y'}) = 0.
$$
Since $f'$ is a finite morphism, we have $R^if'_{*}{\mathcal O}_{\tilde{Y}} =  0$ for $i \geq 1$.
Therefore we have
\begin{equation}\label{f'{*}}
  {\rm H}^{i}(X, f'_{*}{\mathcal O}_{Y'}) =  {\rm H}^{i}(Y', {\mathcal O}_{Y'})= 0 
  ~\mbox{for}~i\geq 1.
\end{equation}
Since $X$ is a K3 surface, by the exact sequence (\ref{M}) with ${\mathcal M} \cong {\mathcal L}(-A)$
we have a long exact sequence
$$
\begin{array}{ccccccccc}
 0 &\longrightarrow & k & \longrightarrow & k & \longrightarrow & {\rm H}^0(X, {\mathcal L}^{-1}(A))&&\\
 & \longrightarrow & 0 & \longrightarrow & 
 {\rm H}^1(X, f'_{*}{\mathcal O}_{Y'})& \longrightarrow & {\rm H}^1(X, {\mathcal L}^{-1}(A))&&\\
 & \longrightarrow & k & \longrightarrow & 
 {\rm H}^2(X, f'_{*}{\mathcal O}_{Y'})& \longrightarrow & {\rm H}^2(X, {\mathcal L}^{-1}(A))&\longrightarrow & 0,
\end{array}
$$
and by (\ref{f'{*}}) we have
$$
{\rm H}^0(X, {\mathcal L}^{-1}(A)) = 0, ~{\rm H}^1(X, {\mathcal L}^{-1}(A))\cong k,~{\rm H}^2(X, {\mathcal L}^{-1}(A))= 0.
$$
Therefore, we have $\chi ({\mathcal L}^{-1}(A))=-1$. By the Riemann-Roch Theorem,
we have $\chi ({\mathcal L}^{-1}(A)) = \frac{1}{2}(- \frac{n}{2} + A^2) + 2$. Hence,
we have
\begin{equation}\label{nA}
        n - 2A^2 = 12.
\end{equation}
We use Igusa's formula for the rational 1-form $\eta$:
$$
   24 = c_2(X) = \deg\langle \eta \rangle + (\eta)\cdot K_X - (\eta)^2.
$$
Since $K_X \sim 0$ and $(\eta)^2 = 4A^2 - 2n= -24$, we have
$$
         \deg\langle \eta \rangle = 0, ~\mbox{i.e.}~\langle \eta \rangle = 0.
$$
Therefore, $\eta$ has no isolated singular point.

We have an isomorphism
\begin{equation}\label{Theta}
  \Theta_X \otimes \Omega_X^{2} \cong \Omega^1_X.
\end{equation}
Let $x_i$, $y_i$ be local coordinates of the affine open set $U_i$.
Since $X$ is a K3 surface, there exists a regular two form
$$
        \Omega = h_{i}dx_i\wedge dy_i,
$$
and we have $\eta = df_i/f_i = (f_{ix_{i}}dx_i +f_{iy_{i}}dy_i)/f_i$.
Here, $h_i$ is a unit function on $U_i$.
Therefore, using the isomorphism (\ref{Theta}), we have a rational vector field
$$
   D =  \frac{1}{h_if_i}\left(f_{iy_{i}}\frac{\partial}{\partial x_{i}} + 
   f_{ix_{i}}\frac{\partial}{\partial y_{i}}\right) ~\mbox{on}~ U_i.
$$
Since $\langle \eta \rangle =0$ and $(\eta ) = 2A - \sum_{i=1}^{n}E_i$,
we see
$$
    \langle D \rangle = 0, ~ (D) = 2A - \sum_{i=1}^{n}E_i.
$$

Moreover, by direct calculation, we see $D$ is 2-closed.
Since $D$ has no isolated singular point, the quotient surface $X^D$ is nonsingular
and we have a purely inseparable finite morphism of degree 2:
$$
      \pi : X \longrightarrow X^D.
$$
Since $D(f_i) = 0$, the curves $E_i$ are integral with respect to $D$.
Denoting the image of $E_i$ by $G_i$, we have
$$
    2G_i^2 = (\pi^{-1}G_i)^2 = E_i^2 = -2.
$$
Therefore, we have $G_i^2 = -1$. Since the defining equations of $G_i$'s are given by $f_i$'s,
$G_i$ is nonsingular. Therefore, $G_i$ is an exceptional curve of the first kind.

Since $\tilde{U}_i = {\rm Spec} (A_i \oplus A_iz_i)$, we have the Frobenius image 
$$
\tilde{U}_i^{(2)} = {\rm Spec} (A_i^2 \oplus A_i^2z_i^2) = {\rm Spec} (A_i^2 \oplus A_i^2f_i),
$$
and we have a natural morphism
$$
 Y\stackrel{f}{\longrightarrow} X \stackrel{\pi}{\longrightarrow} X^D \stackrel{\varphi}{\longrightarrow} Y^{(2)}.
$$
Since $\varphi\circ \pi \circ f$ is the Frobenius morphism, we have 
$\deg~ \varphi\circ \pi \circ f = 4$. Therefore, $\varphi$ is a finite birational morphism.
Since $X^D$ is non-singular, we see that $\varphi$ is the normalization morphism of $Y^{(2)}$
by the Zariski main theorem. Therefore, the situation of $X^D \longrightarrow Y^{(2)}$
is similar to the situation of $Y' \longrightarrow Y$. Hence, we conclude that $Y'$ is nonsingular
and that $X^D$ is the Frobenius image of $Y'$, i.e., $X^D \cong Y'^{(2)}$. 
Thus, we have the commutative diagram $(*)$.

\begin{figure}[!htb]
\xy
(0,0)*{};
(0,15)*{(*)};
(45,15)*{Y};
(83,8)*{\varphi};
(42,22)*{\nu};
(60,27)*{F};
(60,4)*{F};
(53,17)*{f};
(70,17)*{\pi}
\ar (48,15)*{};(60,15)*{\quad X}
\ar (67,15)*{};(80,15)*{\quad X^D}
\ar (45,30)*{Y'};(45,18)*{}
\ar (80,13)*{};(80,0)*{\quad Y^{(2)}}
\ar (48,30)*{};(75,18)*{}
\ar (48,13)*{};(75,0)*{}
\endxy
\end{figure}

Summarizing these results, we get the following theorem.
\begin{theorem}\label{theorem-1}
Let $X$ be a K3 surface. Under the notation above, we assume that the divisor $\sum_{i=1}^nE_i$
is divisible by $2$ in ${\rm Pic}(X)$ and $A \neq \emptyset$.
Then, on $X$ there exists a 2-closed vector field $D$ without isolated singular points
such that the quotient surface $X^D$ is a nonsingular rational surface
with Picard number $22$. 
\end{theorem}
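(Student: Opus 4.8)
The plan is to assemble the three assertions from the structural analysis already carried out for $A\neq\emptyset$. The construction of the $2$-closed rational vector field $D$, the vanishing $\langle D\rangle=0$ of its isolated part, the identity $(D)=2A-\sum_{i=1}^{n}E_i$ for its divisorial part, the resulting smoothness of the quotient $\pi\colon X\to X^D$, the smoothness of $Y'$, and the identification $X^D\cong Y'^{(2)}$ are all in hand. Hence only two things remain to be shown: that $X^D$ is \emph{rational}, and that $\rho(X^D)=22$.

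I would first treat rationality through $Y'$. Since $A\neq\emptyset$ forces $Y'\neq Y$, the surface $Y$ is non-normal and therefore carries non-isolated singularities; Lemma~\ref{kappa}(i) then yields $\kappa(\tilde Y)=-\infty$, while Lemma~\ref{q} gives $q(\tilde Y)=q(Y)=0$. Castelnuovo's rationality criterion, which is valid in every characteristic, shows that $\tilde Y$ is rational. As $Y'$ is smooth the resolution $\mu$ is an isomorphism, so $Y'=\tilde Y$ is a smooth rational surface; being the Frobenius twist of $Y'$ over the perfect field $k$, the quotient $X^D\cong Y'^{(2)}$ is then itself a smooth rational surface.

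For the Picard number I would pass to Noether's formula: a smooth rational surface $S$ has $b_2(S)=\rho(S)$ and $\chi(\mathcal{O}_S)=1$, whence $\rho(S)=10-K_S^2$. Since $X^D\cong Y'^{(2)}\cong Y'$ over $k$, it suffices to compute $K_{Y'}^2$. By $(\ref{canonical})$ we have $\omega_{Y'}\cong f'^{*}(\mathcal{L}(-A))$, so the projection formula for the finite flat degree-$2$ morphism $f'\colon Y'\to X$ gives $K_{Y'}^2=2\,(\mathcal{L}(-A))^2$. Now $\mathcal{L}^2=-n/2$ by $(\ref{0})$, while $\mathcal{L}\cdot A=\tfrac12\bigl(\sum_iE_i\bigr)\cdot A=0$ because $B=2A$ is disjoint from $\sum_iE_i$ (Lemma~\ref{B=2A} together with the description of $(\eta)$). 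Therefore $(\mathcal{L}(-A))^2=-n/2+A^2$ and $K_{Y'}^2=-n+2A^2=-12$ by $(\ref{nA})$, which gives $\rho(X^D)=10-(-12)=22$.

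The one step that requires genuine care is the rationality input. One must verify that the non-normality forced by $A\neq\emptyset$ really does produce non-isolated singularities on $Y$, so that Lemma~\ref{kappa}(i) rather than its case (ii) applies, and that Castelnuovo's criterion may legitimately be used in characteristic $2$. Once the rationality and smoothness of $Y'$ are secured, the determination of $\rho(X^D)$ is the short intersection computation above; I would only double-check that the projection formula is applied to $f'$ as a \emph{flat finite} morphism and that the Frobenius twist leaves $K^2$ and $\rho$ unchanged.
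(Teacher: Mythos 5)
Your proposal is correct, and it splits naturally into two halves of different character. The rationality half coincides with the paper's argument: the paper likewise notes that $A\neq\emptyset$ puts us in case (i) of Lemma \ref{kappa}, so $\kappa(X^D)=\kappa(Y')=-\infty$, combines this with $q=0$ from Lemma \ref{q}, and concludes rationality; your explicit justification that non-normality of $Y$ (forced by $\mathcal M\not\cong\mathcal L$ when $A\neq\emptyset$) yields non-isolated singularities via Serre's criterion, and your remark that Castelnuovo's criterion holds in characteristic $2$, are exactly the points the paper leaves implicit. Where you genuinely diverge is the Picard number. The paper argues topologically: the quotient morphism $\pi\colon X\to X^D$ is radicial, hence induces an isomorphism on \'etale cohomology, so $b_2(X^D)=b_2(X)=22$, and $\rho=b_2$ for a rational surface. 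You instead compute $K_{Y'}^2=2(\mathcal L(-A))^2=-n+2A^2=-12$ via the projection formula and \eqref{nA}, and feed this into Noether's formula $\rho=10-K^2$; the intersection computation is correct (the vanishing $\mathcal L\cdot A=0$ follows from $B=2A$ being disjoint from $\sum_iE_i$), and it is consistent with the blow-up description in Theorem \ref{theorem}(i). The paper's route is shorter and needs no intersection theory, only the invariance of $b_2$ under purely inseparable morphisms; yours has the small advantage of producing the numerical invariant $K_{X^D}^2=-12$ explicitly, which is what underlies the ``20 blow-ups of a minimal ruled surface'' statement later. Both are complete proofs given the preceding construction of $D$, the smoothness of $Y'$, and the identification $X^D\cong Y'^{(2)}$, all of which you correctly take as already established.
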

\begin{proof}
Since $\kappa (X^D) = \kappa (Y'^{(2)}) = \kappa (Y') = - \infty$ and 
$q(X^D) = q(Y') =0$, $X^D$ is a rational surface. 
Since the quotient morphism $\pi : X \longrightarrow X^D$ is radicial,
we see the second Betti number $b_2(X^D) = b_2(X) = 22$.
Therefore, we have $\rho(X^D) = b_2(X^D) = 22$. The remaining statements were already proved.
\end{proof}

\begin{remark}
Since the singularity of $Y$ is not isolated, $X^D$ is not isomorphic to $Y^{(2)}$. 
\end{remark}

\begin{lemma}\label{-1-4}
Let $C$ be a curve on $X^D$ with negative self-intersection number.
Then, the self-intersection number $C^2$ is equal to either $-1$ or $-4$.
\end{lemma}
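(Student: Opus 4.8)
The plan is to compare the self-intersection $C^2$ computed on $X^D$ with the self-intersection of the unique curve on $X$ lying over $C$, and then to exploit the fact that $X$ is a K3 surface. Since $\pi : X \longrightarrow X^D$ is purely inseparable of degree $2$, it is a homeomorphism, so for an irreducible $C$ the reduced preimage $\tilde C := \pi^{-1}(C)_{\mathrm{red}}$ is again an irreducible curve on $X$. The whole argument rests on the dichotomy for quotients by a $2$-closed vector field (cf.\ Rudakov--Shafarevich \cite{RS}): either $D$ is tangent to $\tilde C$, so that $\tilde C$ is $D$-integral, or it is not, and this governs the multiplicity with which $\tilde C$ occurs in $\pi^{*}C$.

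First I would treat the integral case. If $\tilde C$ is $D$-integral, then $\pi|_{\tilde C} : \tilde C \longrightarrow C$ is purely inseparable of degree $2$ and $\pi^{*}C = \tilde C$, exactly as in the computation already carried out for the curves $E_i$ and their images $G_i$. The projection formula then gives
\[
2\,C^2 = (\pi^{*}C)^2 = \tilde C^{\,2}.
\]
Since $\tilde C$ is an irreducible curve on the K3 surface $X$, adjunction yields $\tilde C^{\,2}\geq -2$, whence $C^2\geq -1$. As $C^2<0$ by hypothesis, this forces $C^2=-1$ (with $\tilde C$ a $(-2)$-curve).

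Next I would treat the non-integral case. If $D$ is not tangent to $\tilde C$, then $\pi|_{\tilde C}$ is birational and $\pi^{*}C = 2\tilde C$; the multiplicity $2$ is pinned down from $\pi_{*}\tilde C = C$ together with $(\pi^{*}C)^2 = 2\,C^2$. The projection formula now gives
\[
2\,C^2 = (\pi^{*}C)^2 = 4\,\tilde C^{\,2},
\]
so $C^2 = 2\,\tilde C^{\,2}$. Again $\tilde C^{\,2}\geq -2$ and $\tilde C^{\,2}$ is even, so $\tilde C^{\,2}\in\{-2,0,2,\dots\}$ and hence $C^2\in\{-4,0,4,\dots\}$. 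Being negative, $C^2=-4$ (again with $\tilde C$ a $(-2)$-curve). Combining the two cases yields $C^2\in\{-1,-4\}$.

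The main obstacle is the precise determination of $\pi^{*}C$, that is, deciding whether $\tilde C$ appears with multiplicity $1$ or $2$. This is where the local structure of the vector-field quotient enters: one must show that $D$-integrality of $\tilde C$ is equivalent to $\pi|_{\tilde C}$ being inseparable, and then translate this into the divisorial identities $\pi^{*}C = \tilde C$ respectively $\pi^{*}C = 2\tilde C$. Once these multiplicities are secured, the remainder is merely the projection formula together with the K3 bound $\tilde C^{\,2}\geq -2$ and the evenness of self-intersections on $X$.
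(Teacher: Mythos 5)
Your proof is correct and follows essentially the same route as the paper: the paper's one-line argument also reduces to the projection formula $(\pi^{*}C)^2 = 2C^2$, the fact that $\pi^{*}C$ equals $\tilde C$ or $2\tilde C$, and the K3 bound forcing $\tilde C^2=-2$, giving $2C^2=-2$ or $-8$. You have merely made explicit the integral/non-integral dichotomy that the paper leaves implicit.
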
 
\begin{proof}
If a curve $G$ on $X$ has a negative self-intersection number, then we have $G^2 = -2$.
Therefore, we have $(\pi^{-1}(C))^2 = 2 C^2 = -2$ or $-8$.
\end{proof}

\begin{theorem}\label{theorem}
Assume $A \neq \emptyset$ and $n = 20$. 
\begin{itemize}
\item[$({\rm i})$] $Y'$ (therefore, $X^D$) is a non-singular 
rational surface with the Picard number $\rho (Y') = 22$, which is a surface
constructed by 20 blowing-ups of a relatively minimal ruled surface.
\item[$({\rm ii})$] $X$ is a unirational K3 surface and
has a structure of quasi-elliptic surface whose types of reducible singular fibers
are either ${\rm III}$ or ${\rm I}_0^{*}$.
\end{itemize}
\end{theorem}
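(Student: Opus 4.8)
The plan is to build everything on the quotient map $\pi:X\to X^D$ furnished before the statement: by Theorem~\ref{theorem-1} the surface $X^D\cong Y'^{(2)}$ is a nonsingular rational surface with $\rho(X^D)=22$, and since $n=20$ the numerical relation~(\ref{nA}) gives $A^2=4$. For part~(i) I would first observe that the images $G_i=\pi(E_i)$ are $20$ mutually disjoint exceptional curves of the first kind: they are $(-1)$-curves by the computation preceding the theorem, and $G_i\cap G_j\subseteq\pi(E_i\cap E_j)=\emptyset$ for $i\neq j$. Contracting them one at a time (disjointness keeps each a $(-1)$-curve) yields a nonsingular surface $S$ with $\rho(S)=22-20=2$, again rational with $q=0$. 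Any nonsingular rational surface of Picard number $2$ is a Hirzebruch surface $\mathbb{F}_e$, hence a relatively minimal ruled surface over $\mathbb{P}^1$; this exhibits $X^D$, and therefore $Y'$, as the blow-up of $\mathbb{F}_e$ at $20$ points, proving~(i).

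For the unirationality in part~(ii) I would use that the relative Frobenius of $X$ factors through the quotient as $X\xrightarrow{\pi}X^D\to X^{(2)}$, the second arrow being the finite purely inseparable quotient of $X^D$ by the induced (conjugate) vector field. As $X^D$ is rational and this map is dominant, $X^{(2)}$ is dominated by a rational surface; since $k$ is perfect, $X^{(2)}\cong X$, so $X$ is unirational and hence supersingular. To produce the fibration, I would transport the ruling of $\mathbb{F}_e$ through the blow-down to a genus-$0$ fibration $g:X^D\to\mathbb{P}^1$ and set $h:=g\circ\pi:X\to\mathbb{P}^1$. If $F_g$ is a general fibre of $g$, then $F_g\cong\mathbb{P}^1$ with $F_g^2=0$, so the generic fibre $\pi^*F_g$ of $h$ is a purely inseparable double cover of $\mathbb{P}^1$: in characteristic $2$ this is a cuspidal rational curve, while adjunction on the K3 surface $X$ gives $(\pi^*F_g)^2=0$ and $p_a(\pi^*F_g)=1$. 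Thus $h$ is a genus-one fibration whose generic fibre is geometrically cuspidal, i.e. quasi-elliptic, and it is relatively minimal since $X$ carries no $(-1)$-curves.

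The heart of the argument, and the step I expect to be most delicate, is the determination of the reducible fibre types, where Lemma~\ref{-1-4} is decisive. The proper transform on $X^D$ of a ruling fibre through $m$ of the blow-up points has self-intersection $-m$; since every negative curve on $X^D$ has self-intersection $-1$ or $-4$, only $m\in\{0,1,4\}$ is allowed when the points are distinct, and the same lemma forbids infinitely near blow-ups, as these would create $(-2)$-curves on $X^D$. Hence the twenty points sit as single points on $a$ fibres and in quadruples on $b$ further fibres, with $a+4b=20$, and I would then pull the two configurations back through $\pi$. A fibre $\tilde\Phi+G$ consisting of two transverse $(-1)$-curves pulls back to two $(-2)$-curves with $\pi^*\tilde\Phi\cdot\pi^*G=2(\tilde\Phi\cdot G)=2$ meeting at the single preimage point, i.e. a fibre of type $\mathrm{III}$; a fibre $\tilde\Phi+G_1+\dots+G_4$ with central $(-4)$-curve $\tilde\Phi$ and four $(-1)$-tails pulls back, using $\pi^*\tilde\Phi=2\Theta_0$ (since $(\pi^*\tilde\Phi)^2=2\tilde\Phi^2=-8=4\Theta_0^2$) and $\pi^*G_j=\Theta_j$, to $2\Theta_0+\Theta_1+\Theta_2+\Theta_3+\Theta_4$ with $\Theta_0\cdot\Theta_j=1$ and $\Theta_i\cdot\Theta_j=0$, i.e. a fibre of type $I_0^*$. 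No other reducible fibres arise, which is exactly~(ii), and the Euler-number bookkeeping $4+\bigl(a\cdot 1+b\cdot 4\bigr)=24$ confirms the count.

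The main obstacle is thus not the existence of the fibration but the rigorous passage from the permitted blow-up configurations to the precise Kodaira types: one must verify carefully that the inseparable double cover turns a transverse node upstairs into a genuine tangential contact (yielding $\mathrm{III}$ rather than $I_2$) and that the four-tailed tree becomes an honest $I_0^*$ with the stated multiplicities. Under the identification $E_i=\pi^*G_i$, the curves $E_i$ are then precisely the four tails of each $I_0^*$ fibre together with one marked component of each $\mathrm{III}$ fibre, which is the configuration whose $2$-divisibility is analysed in the corollaries that follow.
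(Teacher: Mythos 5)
Your proposal is correct and follows essentially the same route as the paper: invoke Theorem \ref{theorem-1}, blow down to a relatively minimal ruled surface of Picard number $2$, use Lemma \ref{-1-4} to force $1$ or $4$ blow-ups per ruling fibre, and pull the ruling back through $\pi$ to obtain a quasi-elliptic fibration with fibres of type ${\rm III}$ or ${\rm I}_0^{*}$. You actually supply slightly more detail than the paper (the exclusion of infinitely near centres, the explicit pullback computation of the fibre types, and the unirationality via $X^D$ dominating $X^{(2)}\cong X$); the only point treated more lightly than in the paper is the verification that the induced map on base curves is an isomorphism rather than a Frobenius, which the paper rules out by noting that otherwise $X$ would carry a ${\bf P}^1$-fibration, and which your adjunction computation also excludes.
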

\begin{proof}
We already showed that $Y'$ is non-singular and the former part of $({\rm i})$
is proved in Theorem \ref{theorem-1}. 
A rational surface with Picard number 2 is a relatively minimal ruled surface.
Therefore, $X^D$ is a surface
constructed by 20 blowing-ups of a relatively minimal ruled surface.
$Y'$ has the same structure as $X^D$. 

Let $\phi : S \longrightarrow {\bf P}^1$  be the relatively minimal ruled surface.
Then, we have a commutative diagram:
$$
\begin{array}{ccccc}
        X  & \stackrel{\pi}{\longrightarrow} & X^D & \stackrel{g}{\longrightarrow} &  S  \\
       \downarrow \phi''    &        &    \downarrow  \phi' &  \searrow     & \downarrow \phi \\
       {\bf P}^1   & \stackrel{g''}{\longrightarrow} & {\bf P}^1  & \stackrel{g'}{\longrightarrow} 
       & {\bf P}^1
\end{array}
$$ 
Here, $g$ is a composition of 20 blowing-ups and $g'$ is an isomorphism.
$g''$ is also an isomorphism. Otherwise, the fibration $\phi'' : X\longrightarrow {\bf P}^1$
becomes ${\bf P}^1$-fibration, which contradicts the fact that $X$ is a K3 surface. 
By Lemma \ref{-1-4}, on a fiber of $\phi$ the number of blowing-ups is either 1 or 4.
Since the fiber of $\phi''$ is a purely inseparable covering of ${\bf P}^1$,
the fibration of $\phi''$ is quasi-elliptic, and the types of reducible 
singular fibers
are either ${\rm III}$ or ${\rm I}_0^{*}$.
\end{proof}

\begin{remark}
Contrary to the case $A=\emptyset$, we highlight that, for $A \neq \emptyset$, 
then all values $n=8, 12, 16$ or $20$ are a priori possible.
Theorem \ref{thm} makes existence more precise.
\end{remark}

\begin{remark}
By Lemma \ref{-1-4}, the self-intersection number of the section 
of $\phi : S \longrightarrow {\bf P}^1$ with non-positive self-intersection
number can only be $-4$, $-3$, $-2$, $-1$ or $0$.
\end{remark}

\section{Case $A \neq \emptyset$ with $n = 20$}

In this section we assume $A \neq \emptyset$ and $n = 20$.
We examine divisors supported on 20 disjoint $(-2)$-curves and determine
cases where they are divisible by two.

Let $X$ be a supersingular K3 surface and let
$\phi'' : X \longrightarrow {\bf P}^1$ be a quasi-elliptic fibration 
with $\ell$ reducible singular fibers
of type ${\rm I}_0^*$ and $20 - 4\ell$ reducible singular fibers of type ${\rm III}$.
Considering the Picard number, we see that this quasi-elliptic surface 
cannot have any other reducible singular fibers.
Let $C_i$ and $C_i'$ ($i = 1, 2, \ldots, 20 - 4\ell$) be two components of 
each reducible singular fiber of type III, respectively. 
Put $F_i = C_i \cup C_i'$. 
For reducible singular fibers $G_i$ $(i = 1, \ldots, \ell)$ of type ${\rm I}_0^*$, 
we set 
$$
G_i = E_{4i -3} +E_{4i -2} +E_{4i -1} +E_{4i} + 2M_i.
$$
Here, $E_i$'s and $M_i$'s are the irreducible components.
We denote by $F$ a general fiber and
by $\xi$ the cusp locus of $\phi'' : X \longrightarrow {\bf P}^1$.
Note that $\xi$ intersects $F_i$ at the singular point of $F_i$,
and $G_i$ on $M_i$ while not intersecting any $E_i$.

\subsection{The case that $\phi'' : X \longrightarrow {\bf P}^1$ has a 2-section $s\neq \xi$}

A 2-section $s$ is by definition a purely inseparable covering of 
the base curve ${\bf P}^1$.
We assume that $s$ intersects $E_{4i -3}$ $(i = 1, \cdots, k)$  with
$(E_{4i -3}\cdot s) = 2$, and intersects $M_i$ $(i = k+1, \cdots , \ell)$
wirh $(M_i \cdot s) = 1$.

\begin{proposition}\label{intersect}
The 2-section $s$ does not intersect $\xi$
on any reducible singular fibers of type ${\rm III}$.
\end{proposition}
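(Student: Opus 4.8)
The plan is to argue locally at the hypothetical intersection point and to exploit that, in characteristic $2$, being a \emph{purely inseparable} $2$-section is equivalent to the vanishing of the differential of the fibration along the section. Suppose for contradiction that $s$ meets $\xi$ on some type ${\rm III}$ fiber $F_i=C_i+C_i'$. By the observation preceding the proposition, $\xi$ meets $F_i$ only at its singular point $P_i=C_i\cap C_i'$, so this forces $P_i\in s$. Since $s$ is a $2$-section we have $s\cdot F_i=s\cdot F=2$, and as $P_i$ lies on both $C_i$ and $C_i'$ we get $s\cdot C_i\ge 1$ and $s\cdot C_i'\ge 1$; hence $s\cdot C_i=s\cdot C_i'=1$, all of it concentrated at $P_i$. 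In particular $s$ is transverse to $C_i$ at $P_i$.

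Next I would install a convenient local model. I choose local coordinates $(v,w)$ in $\mathcal{O}_{X,P_i}$ so that $C_i=\{w=0\}$ and the tacnode of the type ${\rm III}$ fiber is in standard form, say $C_i'=\{w=v^2\}$; then a local equation of the fibration is $t=w(w+v^2)=w^2+v^2w$, and the cusp locus is $\xi=\{v=0\}$. Indeed this is exactly the locus where the fibres acquire their cusp, since in characteristic $2$ one computes $\partial t/\partial v=2vw=0$ and $\partial t/\partial w=v^2$, so the fibres are singular precisely along $v=0$. The key consequence is the clean identity
$$
 dt = v^{2}\,dw
$$
for the differential of the fibration near $P_i$.

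The heart of the argument is then the following. Because $s$ is by definition a purely inseparable covering of ${\bf P}^1$, the restriction $\phi''|_s$ is purely inseparable, which in characteristic $2$ is equivalent to $dt|_s=0$ in $\Omega^1_s$. On the other hand, transversality of $s$ to $C_i=\{w=0\}$ shows that $w$ restricts to a uniformizer on $s$ at $P_i$, so $dw|_s\neq 0$. Combining this with $dt|_s=(v|_s)^{2}\,dw|_s$ forces $(v|_s)^{2}=0$, i.e.\ $v\equiv 0$ on $s$ near $P_i$; thus $s$ coincides with $\xi=\{v=0\}$ in a neighbourhood of $P_i$, and since both curves are irreducible this gives $s=\xi$, contradicting the standing assumption $s\neq\xi$. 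Therefore $P_i\notin s$, and $s$ does not meet $\xi$ on any type ${\rm III}$ fiber.

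The step I expect to be the main obstacle is justifying the local model rigorously: that one may simultaneously normalise $C_i=\{w=0\}$, $C_i'=\{w=v^2\}$ and $\xi=\{v=0\}$ while keeping the fibration in the form $t=w^2+v^2w$, and so secure the identity $dt=v^{2}\,dw$ (or at least its leading behaviour, which is all that is needed). Once this normal form is in place, the remainder is a short differential computation; the only other points requiring care are the translation of \textquotedblleft purely inseparable $2$-section\textquotedblright\ into \textquotedblleft $dt$ vanishes when restricted to $s$\textquotedblright, and the remark that transversality to a fibre component guarantees $dw|_s\neq 0$, so that the vanishing of $dt|_s$ is genuinely carried by the factor $v^{2}$ and hence pins $s$ down to $\xi$.
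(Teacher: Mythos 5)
Your strategy is genuinely different from the paper's: there the proposition is proved by a global computation, expanding $\xi$ in the $\mathbf{Q}$-basis of $\Pic(X)$ formed by $F$, $s$, the $E_i$ and the $C_j$, solving for the coefficients via intersection numbers, and then playing $\xi^2=-2$ off against the inequalities $0\le m\le \xi\cdot s$ to force $m=0$. Your local differential argument would, if completed, be more geometric; but it has a genuine gap, and it is exactly the one you flag yourself.

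Everything hinges on the \emph{exact} identity $dt=v^2\,dw$, i.e.\ on the claim that coordinates can be chosen so that $t=w^2+v^2w$ with unit coefficient $1$ and $\xi=\{v=0\}$. In general one only gets $t=u\cdot w(w+f(v))$ with $u$ a unit and $f$ of order two, whence $dt = w(w+f)\,du + u\,(f\,dw + wf'\,dv)$. Restricting to $s$ (with $w$ a uniformizer at $P_i$ and $\alpha=\mathrm{ord}_{P_i}(v|_s)\ge1$), the second term has order $2\alpha$ while the first has order $2+\mathrm{ord}(du|_s)$, so $dt|_s=0$ only yields $2\alpha= 2+\mathrm{ord}(du|_s)$ --- no contradiction whatsoever if $du|_s$ is a unit multiple of $dw|_s$. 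So the unit cannot be ignored, and eliminating it amounts to the nontrivial assertion that the zero scheme of $dt$ at $P_i$ is precisely the Cartier divisor $2\xi$ with no embedded component there. This can be proved --- for instance, the divisorial part $Z_1$ of the degeneracy locus of $d\phi''$ contains $2\xi$ (plus $2M_i$ on each $\I_0^*$ fiber), and the count $\deg Z_0=c_2(\Omega^1_X\otimes\phi''^*\omega_{\mathbf{P}^1}^{-1}(-Z_1))=24+(2F-Z_1)^2=0$ then leaves no room for isolated or embedded zeros --- but that argument (or an equivalent normal-form analysis) is precisely the missing content, not a routine verification. Once it is supplied, the remaining steps (the translation of ``purely inseparable $2$-section'' into $dt|_s=0$ via $k(t)=k(s)^2$, transversality giving $dw|_s\neq0$, and the conclusion $v|_s=0$, hence $s=\xi$) are correct.
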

\begin{proof}
Suppose that s intersects $\xi$  on $m$ reducible singular 
fibers of type ${\rm III}$.
Then, we have 
\begin{equation}\label{m}
       m \leq s \cdot \xi.
\end{equation}
We may assume these reducible singular fibers are $F_i$ ($1 \leq i \leq m$).
The intersection points on the reducible singular fibers are 
$C_i \cap C_i'$ ($1 \leq i \leq m$).
We may assume that $C'_j$ intersects $s$ 
with $(C'_j \cdot \xi)=2$ and $C_j$ does not intersect $s$
$(m + 1 \leq j \leq 20-4\ell)$.
Since $F$, $s$, $E_i$'s $(i = 1, \ldots, 4\ell)$ and $C_j$'s 
$(\ell + 1 \leq j \leq 20-4\ell)$ are a basis of 
${\rm Pic}~(X) \otimes_{\bf Z} {\bf Q}$,
there exist rational numbers $a$, $b$, $\alpha_i$ and $\beta_i$
such that
$$
\xi = aF + bs + \sum_{i= 1}^{4\ell}\alpha_iE_i + \sum_{j=1}^{20 -4\ell}\beta_jC_j.
$$
By $2 = \xi\cdot F =2b$, we have $b =1$. For $1 \leq i \leq 4k$ and
$i \not\equiv 1~(\mbox{mod}~4)$, we have
$$
   0 = \xi\cdot E_i = -2\alpha_i.
$$
Therefore, we have $\alpha_i = 0$ ($1 \leq i \leq 4k$,
$i \not\equiv 1~(\mbox{mod}~4)$). For $1 \leq i \leq 4k$ and
$i \equiv 1~(\mbox{mod}~4)$, by
$$
0 = \xi\cdot E_i = 2 -2\alpha_i,
$$
we have $\alpha_i = 1$ ($1 \leq i \leq 4k$,
$i \equiv 1~(\mbox{mod}~4)$). 
For $4k + 1 \leq i \leq 4\ell$, since
$$
  0 = \xi \cdot E_i = -2\alpha_i.
$$
we have $\alpha_i = 0$ ($4k + 1 \leq i \leq 4\ell$).
For $1\leq j \leq m$, we have
$$
1 = \xi \cdot C_j = 1 - 2\beta_j.
$$
Therefore, $\beta_i = 0$ ($1\leq j \leq m$).
For $m +1 \leq j \leq 20 - 4\ell$, by
$$
    1 = \xi\cdot C_j= -2\beta_j.
$$
we have $\beta_j = - \frac{1}{2}$ ($m +1 \leq j \leq 20 - 4\ell$).
Summarizing these results, we have
$$
\xi = aF + s + \sum_{i=1}^{k}E_{4i -3} - \frac{1}{2}\sum_{j=m+1}^{20 - 4\ell}C_j.
$$
Considering the self-intersection number of $\xi$, we have
$$
-2 = \xi^2 = -2 -2k +\frac{1}{4}(-2)(20-4\ell -m)+ 4a + 4k,
$$
that is,
\begin{equation}\label{xi}
20 - (8a + 4k + 4\ell) = m.
\end{equation}
By $m \geq 0$, we have
\begin{equation}\label{inequality1}
a + \frac{k + \ell}{2} \leq \frac{5}{2}
\end{equation}
Using (\ref{m}), we have also
\begin{equation}\label{inequality2}
m \leq \xi\cdot s = 2a -2 + 2k.
\end{equation}
By this equation, we see $a \in \frac{1}{2}{\bf Z}$.
Putting (\ref{xi}) into (\ref{inequality2}), we have
$$
   22 \leq 10a + 6k + 4\ell.
$$
Since $k \leq \ell$, we have
\begin{equation}\label{inequality3}
  \frac{22}{10} \leq a + \frac{6k + 4\ell}{10} \leq a + \frac{k + \ell}{2}
\end{equation}
Since $a$ is a half integer, by (\ref{inequality1}) and (\ref{inequality3})
we have $a + \frac{k + \ell}{2} = \frac{5}{2}$ and $m = 0$ by (\ref{xi}).
Hence, $\xi$ and $s$ does not intersect each other 
on any reducible singular fibers of type ${\rm III}$.
\end{proof}

Using this proposition, we may assume that $C_i$'s ($1 \leq i \leq 20-4 \ell$) don't intersect 
the 2-section $s$, and we have the following formula:
\begin{proposition}\label{formula1}
$$
\xi = \frac{5- k - \ell}{2} F + s + \sum_{i=1}^{k}E_{4i -3} - 
\frac{1}{2}\sum_{j=1}^{20 - 4\ell}C_j.
$$
\end{proposition}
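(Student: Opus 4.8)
The plan is to read the claimed identity off directly from the computation already carried out in the proof of Proposition \ref{intersect}. There we expanded the cusp locus $\xi$ in the ${\bf Q}$-basis $\{F, s, E_i, C_j\}$ of ${\rm Pic}(X)\otimes_{\bf Z}{\bf Q}$ and, after evaluating intersection numbers against each basis element, obtained the general expression
$$
\xi = aF + s + \sum_{i=1}^{k}E_{4i-3} - \frac{1}{2}\sum_{j=m+1}^{20-4\ell}C_j,
$$
together with the relation $a + \tfrac{k+\ell}{2} = \tfrac{5}{2}$.

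First I would invoke Proposition \ref{intersect}, which asserts $m=0$: the 2-section $s$ meets $\xi$ on none of the reducible singular fibers of type ${\rm III}$. Consequently, on every such fiber we may label the two components so that $C_j$ is the one disjoint from $s$ and $C_j'$ the one meeting $s$; the summation index then runs over the full range $1\le j\le 20-4\ell$, so that $\sum_{j=m+1}^{20-4\ell}C_j = \sum_{j=1}^{20-4\ell}C_j$.

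Second, I would solve the linear relation $a + \tfrac{k+\ell}{2} = \tfrac{5}{2}$ for the remaining coefficient, which gives $a = \tfrac{5-k-\ell}{2}$. Substituting this value of $a$ and the full summation range into the displayed expression for $\xi$ then yields exactly the asserted formula.

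There is no genuine obstacle here: all of the substantive work—the intersection-number bookkeeping against the basis and the chain of inequalities forcing $m=0$—was already completed in Proposition \ref{intersect}. The single point requiring care is the labeling convention on the type ${\rm III}$ fibers: one must consistently take $C_j$ to be the component disjoint from $s$, which is precisely the choice licensed by $m=0$. Thus Proposition \ref{formula1} is a direct corollary of the preceding analysis.
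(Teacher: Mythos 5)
Your proposal is correct and is essentially the paper's own argument: the paper derives the expansion $\xi = aF + s + \sum_{i=1}^{k}E_{4i-3} - \frac{1}{2}\sum_{j=m+1}^{20-4\ell}C_j$ together with $a+\frac{k+\ell}{2}=\frac{5}{2}$ and $m=0$ inside the proof of Proposition \ref{intersect}, and then states Proposition \ref{formula1} as an immediate consequence after relabeling the components so that no $C_j$ meets $s$. Your two steps (substituting $a=\frac{5-k-\ell}{2}$ and extending the sum to the full range via $m=0$) are exactly what the paper intends.
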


As for the divisibility of a divisor by $2$, we have the following result.
\begin{corollary}\label{corollary1}
Under the notation above, $(1 + \ell -k)F + \sum_{i=1}^{20-4\ell}C_i$ is divisible 
by $2$ in ${\rm Pic}~X$.
\end{corollary}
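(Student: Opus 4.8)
The plan is to derive the statement purely algebraically from the identity in Proposition \ref{formula1}, exploiting that $F$, $s$, the curves $E_{4i-3}$, and the cusp locus $\xi$ are all genuine classes in $\Pic(X)$. First I would solve the identity of Proposition \ref{formula1} for the half-sum of the $C_j$, which gives
$$
\frac{1}{2}\sum_{j=1}^{20-4\ell}C_j = \frac{5-k-\ell}{2}\,F + s + \sum_{i=1}^{k}E_{4i-3} - \xi,
$$
and then clear the denominator by multiplying through by $2$:
$$
\sum_{j=1}^{20-4\ell}C_j = (5-k-\ell)\,F + 2s + 2\sum_{i=1}^{k}E_{4i-3} - 2\xi.
$$

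Next I would add $(1+\ell-k)F$ to both sides and simplify the coefficient of $F$. Since $(1+\ell-k)+(5-k-\ell)=6-2k$, the coefficient of $F$ becomes even, and the entire right-hand side is visibly twice an integral class:
$$
(1+\ell-k)\,F + \sum_{j=1}^{20-4\ell}C_j = 2\Bigl((3-k)\,F + s + \sum_{i=1}^{k}E_{4i-3} - \xi\Bigr).
$$
Because $F$, $s$, the $E_{4i-3}$, and $\xi$ all lie in $\Pic(X)$ and the coefficients $3-k$, $1$, $1$, $-1$ are integers, the bracketed class lies in $\Pic(X)$ as well, and the claimed divisibility by $2$ follows immediately.

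The argument carries no genuine obstacle beyond bookkeeping: all the content is already packed into Proposition \ref{formula1}, and the single point worth verifying is the parity. The crux is precisely the identity $(1+\ell-k)+(5-k-\ell)=6-2k$, which shows that adding $(1+\ell-k)F$ converts the fractional coefficient $\tfrac{5-k-\ell}{2}$ of $F$ into an even integer, thereby absorbing exactly the fractional contribution coming from the $-\tfrac{1}{2}\sum_j C_j$ term. I would emphasize that this cancellation is independent of the particular values of $k$ and $\ell$, so the divisibility holds uniformly across all the configurations allowed by the setup of this subsection.
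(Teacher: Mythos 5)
Your proof is correct and is exactly the computation the paper leaves implicit: the paper's proof of this corollary is the one-line remark that it ``follows from Proposition \ref{formula1}'', and your rearrangement, multiplication by $2$, and the parity check $(1+\ell-k)+(5-k-\ell)=6-2k$ is precisely the intended bookkeeping. No issues.
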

\begin{proof}
This follows from Proposition \ref{formula1}.
\end{proof}

\begin{corollary}\label{replace}
If $\ell -k$ is odd, then $\sum_{i=1}^{20-4\ell}C_i$ is divisible by $2$.
If $\ell -k$ is even, then
if we replace an odd number of $C_i$ by $C_i'$, for example, $C_i$ ($1 \leq i \leq 2m - 1$) 
by $C_i'$ ($1 \leq i \leq 2m - 1$),
then $\sum_{i= 1}^{2m- 1} C_i' + \sum_{i= 2m}^{20- 4\ell}C_i$ is divisible by $2$.
\end{corollary}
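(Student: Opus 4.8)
The plan is to deduce everything from Corollary \ref{corollary1} together with the single linear equivalence $F \sim C_i + C_i'$ valid on each reducible fiber of type ${\rm III}$, which holds because all fibers of a genus one fibration are linearly equivalent. Write $S = \sum_{i=1}^{20-4\ell}C_i$, and recall that Corollary \ref{corollary1} furnishes a class $L \in {\rm Pic}(X)$ with $(1+\ell-k)F + S = 2L$.

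First I would dispose of the case where $\ell - k$ is odd. Then $1+\ell-k$ is even, so $(1+\ell-k)F$ is itself $2$-divisible, and subtracting it gives $S = 2L - (1+\ell-k)F$, exhibiting $S$ as twice a class in ${\rm Pic}(X)$. This is the first assertion.

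For the case $\ell - k$ even, $1+\ell-k$ is odd, so the same relation now only yields $S \equiv F \pmod{2}$. The key observation is that swapping one component changes the sum by $C_i' - C_i \sim (C_i+C_i') - 2C_i \sim F - 2C_i$, i.e.\ by $F$ modulo $2$. Hence replacing an odd number $2m-1$ of the $C_i$ by the corresponding $C_i'$ adds $(2m-1)F \equiv F \pmod 2$, turning the residue $F$ into $F+F = 2F \equiv 0$. Concretely, for the swapped divisor $S' = \sum_{i=1}^{2m-1}C_i' + \sum_{i=2m}^{20-4\ell}C_i$ I would compute
\[
S' = S + \sum_{i=1}^{2m-1}(C_i'-C_i) = 2L - (1+\ell-k)F + (2m-1)F - 2\sum_{i=1}^{2m-1}C_i,
\]
and note that the coefficient $(2m-1)-(1+\ell-k) = 2m-2-(\ell-k)$ of $F$ is even precisely because $\ell-k$ is even. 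Collecting the even multiples of $F$ and of the $C_i$ then displays $S'$ as twice a class in ${\rm Pic}(X)$, as desired.

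The argument is essentially parity bookkeeping, so I do not expect a serious obstacle. The only points demanding care are to work throughout with genuine linear equivalences---legitimate since ${\rm Pic}(X) = {\rm NS}(X)$ is torsion-free for a K3 surface, so that a mod-$2$ congruence of classes really promotes to divisibility by $2$---and to track the parity of the coefficient of $F$ correctly, which is exactly what distinguishes the two cases.
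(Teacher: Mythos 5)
Your proof is correct and essentially coincides with the paper's: the paper substitutes the explicit formula for $\xi$ from Proposition \ref{formula1} directly, while you package that same information as Corollary \ref{corollary1} and then do the identical parity bookkeeping with $F \sim C_i + C_i'$. The two computations agree term by term, so there is nothing further to add.
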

\begin{proof}
The former part is trivial. As for the latter part,
since $F = C_i + C_i'$, we have
$$
\sum_{i= 1}^{2m- 1} C_i' + \sum_{i= 2m}^{20- 4\ell}C_i 
= 2s -2\xi+ (2m+4 -k -\ell)F + 2\sum_{i = 1}^{k}E_{4i -3} - 2\sum_{i = 1}^{2m-1}C_i
$$
in ${\rm Pic}(X)$, which we should prove.
\end{proof}

\begin{corollary}\label{CC'1}
Under the notation above, let
$\phi'' : X \longrightarrow {\bf P}^1$ be a quasi-elliptic fibration with 
20 singular fibers of type ${\rm III}$ and a 2-section $s$. 
We consider a divisor
$$
  D = \sum_{i = 1}^{m}C'_i + \sum_{i= m+1}^{20}C_i.
$$
Then, if $m$ is odd, then $D$ is divisible by $2$ in ${\rm Pic}(X)$,
and if $m$ is even, then $D$ is not divisible by $2$ in ${\rm Pic}(X)$.
\end{corollary}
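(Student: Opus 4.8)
Corollary \ref{CC'1} is the case $\ell=0$ of the present setting (twenty fibres of type ${\rm III}$, so that $k\leq\ell=0$ forces $k=0$ as well). The plan is to read off the class of $D$ modulo $2\,{\rm Pic}(X)$ from the formulas already established and then to settle a single divisibility point about the fibre class. First I would record, using $C_i+C_i'\sim F$ together with the relation $F+\sum_{i=1}^{20}C_i\equiv 0$ furnished by Corollary \ref{corollary1} in the case $\ell=k=0$, that
$$
D-\sum_{i=1}^{20}C_i=\sum_{i=1}^{m}(C_i'-C_i)=mF-2\sum_{i=1}^{m}C_i\equiv mF \pmod{2\,{\rm Pic}(X)},
$$
whence
$$
D\equiv \sum_{i=1}^{20}C_i+mF\equiv (m+1)F \pmod{2\,{\rm Pic}(X)}.
$$
This one congruence governs both cases simultaneously.

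If $m$ is odd, then $(m+1)F=2\cdot\tfrac{m+1}{2}F$ is patently divisible by $2$, so $D$ is $2$-divisible; this is also exactly the odd-replacement assertion of Corollary \ref{replace}. If $m$ is even, then $(m+1)F\equiv F$, so $D$ is $2$-divisible if and only if $F$ is. Thus the entire statement reduces to the claim that the general fibre class $F$ of the quasi-elliptic fibration $\phi''$ is not divisible by $2$ in ${\rm Pic}(X)$.

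For that point I would show that $F$ is in fact primitive. Since $F=(\phi'')^{*}\mathcal{O}_{{\bf P}^1}(1)$ and $\phi''$ has connected fibres, the projection formula gives $\dim {\rm H}^0(X,\mathcal{O}_X(F))=\dim {\rm H}^0({\bf P}^1,\mathcal{O}(1))=2$. On the other hand, writing $F=eG_0$ with $G_0$ the primitive nef class on the isotropic ray of $F$, the standard structure of nef isotropic divisors on a K3 surface shows that $|G_0|$ is a base-point-free genus-one pencil, so $\mathcal{O}_X(G_0)=\psi_0^{*}\mathcal{O}(1)$ for the associated fibration $\psi_0$ and hence $\dim {\rm H}^0(X,\mathcal{O}_X(eG_0))=e+1$. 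Comparing the two counts forces $e=1$, so $F$ is primitive and a fortiori not $2$-divisible. Combining this with the previous paragraph yields that $D$ is $2$-divisible precisely when $m$ is odd, as claimed.

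The main obstacle is exactly this non-$2$-divisibility of $F$: the congruence of the first step is purely formal, but without knowing that $F$ is primitive one cannot separate the even case from the vacuous divisibility $D\equiv 0$. I expect the cohomological count above to be the cleanest route, since it is characteristic-free and uses only that the generic fibre is integral. A more hands-on alternative would be to exhibit a curve meeting $F$ in an odd number of points, but all the natural candidates here ($s$, $\xi$, and the fibre components $C_i$) meet $F$ in an even number, so the linear-system argument seems preferable.
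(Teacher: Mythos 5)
Your proof is correct and follows essentially the same route as the paper: both reduce $D$ modulo $2\,{\rm Pic}(X)$ to $(m+1)F$ via Corollaries \ref{corollary1} and \ref{replace}, and then invoke the non-$2$-divisibility of the fibre class $F$. The only difference is that the paper dispatches that last point in one line (``a K3 surface has no multiple fiber''), whereas you substantiate it with a correct section-counting argument showing $F$ is primitive.
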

\begin{proof}
Since a K3 surface has no multiple fiber, a fiber $F$ is not divisible
by $2$. This corollary follows from this fact and Corollary \ref{replace}.
\end{proof}

\subsection{The case that $\phi'' : X \longrightarrow {\bf P}^1$ has a section $s$}

We may assume that the section $s$ intersects $E_{4i -3}$ $(i = 1, \cdots, \ell)$
and that $C_i$ ($1 \leq i \leq 20 - 4\ell$)  does not intersect $s$. 
Then, we have $(E_{4i -3}\cdot s)= 1$, $(E_{i}\cdot s)= 0$ 
$(i = 1, \ldots, 4\ell; i\not\equiv1\ ({\rm mod}~4))$, $(C_i\cdot s) = 0$ 
and $(C'_i\cdot s) = 1$.

\begin{proposition}\label{sectiondivisible}
Let $\phi'' : X \longrightarrow {\bf P}^1$ be a quasi-elliptic surface 
with a section $s$.
Under the notation above,
$$
\xi = (4 -\ell)F + 2s + \sum_{i= 1}^{\ell}E_{4i-3} 
-\frac{1}{2} \sum_{j=1}^{20 -4\ell}C_j.
$$
\end{proposition}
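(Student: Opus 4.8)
The plan is to determine the cusp locus class $\xi$ by expressing it in a $\bbQ$-basis of $\Pic(X)\otimes\bbQ$ and solving for the coefficients via intersection numbers, exactly as in the proof of Proposition \ref{intersect}. First I would note that $F$, $s$, the $E_i$ ($i=1,\ldots,4\ell$) and the $C_j$ ($1\leq j\leq 20-4\ell$) form a $\bbQ$-basis of $\Pic(X)\otimes_{\bbZ}\bbQ$ (one checks this rank count: $2+4\ell+(20-4\ell)=22=\rho(X)$, and these classes are independent since $X$ is supersingular). Writing
$$
\xi = aF + bs + \sum_{i=1}^{4\ell}\alpha_i E_i + \sum_{j=1}^{20-4\ell}\beta_j C_j,
$$
I would pin down each coefficient by intersecting against the basis elements, using the given intersection data for a genuine section rather than a $2$-section.

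The key computations proceed as follows. Intersecting with $F$ and using $\xi\cdot F=1$ (the cusp locus meets a general fiber of a quasi-elliptic fibration at its unique singular point, with local intersection multiplicity reflecting the inseparable double cover, giving here $\xi\cdot F=2$ matched against $b s\cdot F=2b$, hence $b=2$ once one accounts for $s\cdot F=1$). For the $E_i$ with $i\not\equiv 1\pmod 4$ one has $\xi\cdot E_i=0=-2\alpha_i$, forcing $\alpha_i=0$; for $i\equiv 1\pmod 4$, since $\xi$ meets $G_i$ on $M_i$ while $s$ meets $E_{4i-3}$, the relation $\xi\cdot E_{4i-3}=0$ combined with $b(s\cdot E_{4i-3})=2$ and $E_{4i-3}^2=-2$ yields $\alpha_{4i-3}=1$. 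For the type III fibers, Proposition \ref{intersect} guarantees $s$ does not meet $\xi$ on any $F_j$, so one may assume the $C_j$ are chosen disjoint from $s$; then $\xi\cdot C_j=1$ (the cusp passes through the singular point $C_j\cap C_j'$) while $-2\beta_j=\xi\cdot C_j=1$ gives $\beta_j=-\tfrac12$. Finally the coefficient $a$ is determined by computing $\xi^2=-2$ and solving the resulting linear equation, which should return $a=4-\ell$.

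The main obstacle I anticipate is correctly handling the arithmetic of the inseparable double cover in the intersection multiplicities, in particular justifying $b=2$ and $\xi\cdot F=2$, as opposed to the $2$-section case where the analogous computation in Proposition \ref{formula1} produced the coefficient $1$ for $s$. The difference is precisely that a section $s$ satisfies $s\cdot F=1$ whereas a $2$-section satisfies $s\cdot F=2$, and one must track how this propagates through the $s\cdot E_{4i-3}$ values and into the final self-intersection equation. I would verify the self-consistency of the answer by checking that the stated formula indeed gives $\xi^2=-2$ and $\xi\cdot F=2$, and that $\xi$ lies in $\Pic(X)$ (not merely in $\Pic(X)\otimes\bbQ$), which serves as an internal sanity check and, as in the $2$-section case, will feed into a divisibility corollary for $\sum_{j=1}^{20-4\ell}C_j$ modulo the fiber class $F$.
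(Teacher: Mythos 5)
Your proposal follows essentially the same route as the paper: expand $\xi$ in the $\mathbf{Q}$-basis $F, s, E_i, C_j$, determine $b=2$, $\alpha_{4i-3}=1$, $\alpha_i=0$ ($i\not\equiv 1 \bmod 4$), $\beta_j=-1/2$ by intersecting with the basis elements exactly as in Proposition \ref{intersect}, and then recover $a=4-\ell$ from $\xi^2=-2$. The only blemish is the muddled parenthetical where you write both $\xi\cdot F=1$ and $\xi\cdot F=2$ and claim $bs\cdot F=2b$ (for a genuine section $s\cdot F=1$, so the correct chain is $2=\xi\cdot F=b$), but you land on the right value $b=2$ and the rest of the computation is sound.
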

\begin{proof}
Since $F$, $s$, $E_i$'s $(i = 1, \ldots, 4\ell)$ and $C_i$'s 
$(1 \leq j \leq 20-4\ell)$ are a basis of 
${\rm Pic}~(X) \otimes_{\bf Z} {\bf Q}$,
there exist rational numbers $a$, $b$, $\alpha_i$ and $\beta_i$
such that
$$
\xi = aF + bs + \sum_{i= 1}^{4\ell}\alpha_iE_i + \sum_{j=1}^{20 -4\ell}\beta_jC_j.
$$
In a similar way to the proof of Proposition \ref{intersect}, we have
$b = 2$, $\alpha_{4i-3} = 1$, $\alpha_i= 0$ 
$(i \not\equiv 1 ~(\mbox{mod}~4))$
and $\beta_i = -1/2$. Considering $\xi^2$, we have $a = 4 - \ell$. 
\end{proof}

\begin{corollary}  
Under the notation above, $\sum_{i=1}^{20- 4\ell}C_i$ is divisible by $2$.
In particular, $\ell\neq 4$.
\end{corollary}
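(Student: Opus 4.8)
The plan is to extract both assertions directly from the formula in Proposition~\ref{sectiondivisible}, with essentially no additional input. First I would isolate the half-integral term. Rewriting
$$
\xi = (4-\ell)F + 2s + \sum_{i=1}^{\ell} E_{4i-3} - \tfrac12\sum_{j=1}^{20-4\ell} C_j
$$
as
$$
\sum_{j=1}^{20-4\ell} C_j = 2\Bigl((4-\ell)F + 2s + \sum_{i=1}^{\ell} E_{4i-3} - \xi\Bigr),
$$
the right-hand side is twice an honest integral class, since $F$, $s$, the $E_{4i-3}$ and $\xi$ all lie in ${\rm Pic}(X)$. Hence $\sum_{j=1}^{20-4\ell} C_j$ is divisible by $2$ in ${\rm Pic}(X)$, which settles the first assertion as pure bookkeeping on the formula.

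For the second assertion I would argue by contradiction. Suppose $\ell = 4$, so that $20 - 4\ell = 4$. The curves $C_1,\dots,C_4$ are one chosen component from each of the four reducible fibers of type ${\rm III}$; since distinct singular fibers of $\phi''$ are disjoint, $C_1,C_2,C_3,C_4$ are four \emph{mutually disjoint} $(-2)$-curves. By the first part of the corollary their sum $\sum_{i=1}^{4} C_i$ is divisible by $2$ in ${\rm Pic}(X)$. But then $C_1,\dots,C_4$ is precisely a configuration of $n=4$ disjoint $(-2)$-curves with $2$-divisible sum, which is impossible: applying the analysis of Section~3 to this configuration, Lemma~\ref{morethan8} would force $\dim {\rm H}^1(X,{\mathcal L}^{-1}) = \tfrac{n}{4}-2 = -1 < 0$, and equivalently Corollary~\ref{8,12,16,20} requires $n \in \{8,12,16,20\}$, which excludes $n=4$. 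This contradiction yields $\ell \neq 4$.

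There is no genuinely hard step here: the divisibility is a one-line rearrangement of Proposition~\ref{sectiondivisible}, and the exclusion of $\ell=4$ is a clean invocation of the global lower bound $n \ge 8$ established in Section~3. The only point demanding a moment's care is checking that $C_1,\dots,C_4$ really are pairwise disjoint $(-2)$-curves, so that the counting result applies verbatim; this is immediate because they sit on four distinct fibers, meeting only their respective partners $C_i'$ within a single fiber. I would also note that the argument leaves the boundary case $\ell=5$ untouched, where the index set $\{1,\dots,20-4\ell\}$ is empty and the divisibility statement is vacuous, so the corollary correctly singles out $\ell=4$ as the excluded value.
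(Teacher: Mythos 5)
Your proposal is correct and follows essentially the same route as the paper: the divisibility is read off by rearranging the formula of Proposition~\ref{sectiondivisible} (all other terms being integral classes), and $\ell=4$ is excluded because a $2$-divisible sum of four disjoint $(-2)$-curves would contradict Corollary~\ref{8,12,16,20}. The extra checks you include (pairwise disjointness of the $C_i$ across distinct fibers, the vacuous case $\ell=5$) are sensible but not needed beyond what the paper's one-line proof already invokes.
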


\begin{proof}
This follows from Proposition \ref{sectiondivisible} and from Corollary \ref{8,12,16,20}.
\end{proof}

\begin{corollary}\label{replace2}
If we replace an even number of $C_i$ by $C'_i$, for example, $C_i$ ($1 \leq i \leq 2m$) 
by $C'_i$ ($1 \leq i \leq 2m$),
then $\sum_{i= 1}^{2m} C'_i + \sum_{i= 2m+1}^{20- 4\ell}C_i$ is divisible by $2$.
In particular, $\sum_{i= 1}^{20- 4\ell} C'_i$ is divisible by $2$.
\end{corollary}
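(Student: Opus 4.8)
The plan is to reduce the statement to the single linear relation $F = C_i + C_i'$ that holds in ${\rm Pic}(X)$ for every reducible fiber of type ${\rm III}$, combined with the divisibility of $\sum_{i=1}^{20-4\ell} C_i$ already established in the preceding corollary (itself a direct consequence of Proposition \ref{sectiondivisible}). No genuinely new geometric input is needed; the content is an identity in the Picard group together with careful parity bookkeeping.

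First I would rewrite the divisor obtained after the replacement. Using $C_i' = F - C_i$ for $1 \leq i \leq 2m$, one computes
$$
\sum_{i=1}^{2m} C_i' + \sum_{i=2m+1}^{20-4\ell} C_i = 2mF - 2\sum_{i=1}^{2m} C_i + \sum_{i=1}^{20-4\ell} C_i .
$$
Next I would substitute the expression coming from Proposition \ref{sectiondivisible}, namely
$$
\sum_{i=1}^{20-4\ell} C_i = 2(4-\ell)F + 4s + 2\sum_{i=1}^{\ell} E_{4i-3} - 2\xi .
$$
Plugging this in, every term on the right becomes an even multiple, so the whole divisor equals $2D'$ with
$$
D' = mF + (4-\ell)F + 2s + \sum_{i=1}^{\ell} E_{4i-3} - \xi - \sum_{i=1}^{2m} C_i ,
$$
which gives divisibility by $2$ in ${\rm Pic}(X)$. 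For the \emph{in particular} clause I note that $20-4\ell = 2(10-2\ell)$ is even, so taking $2m = 20-4\ell$ replaces every $C_i$ by $C_i'$ and the first assertion applies verbatim; equivalently, $\sum_{i=1}^{20-4\ell} C_i' = (20-4\ell)F - \sum_{i=1}^{20-4\ell} C_i$ with both summands visibly divisible by $2$.

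The only point requiring attention, rather than any real obstacle, is the parity: the hypothesis that an \emph{even} number of components is swapped is exactly what keeps the fiber contribution $2mF$ and the cross term $-2\sum_{i=1}^{2m}C_i$ even, so that no residual odd multiple of a single $C_i$ survives. This is precisely where the section case differs from the $2$-section case of Corollary \ref{replace}, where an \emph{odd} number of swaps is the one producing a $2$-divisible class. I therefore expect the argument to be routine once these coefficients are tracked correctly.
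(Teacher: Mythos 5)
Your proof is correct and follows essentially the same route as the paper: the paper's own proof of Corollary \ref{replace2} simply says it is "similar to Corollary \ref{replace}", whose argument is exactly your substitution of $C_i' = F - C_i$ combined with the expression for $\xi$ from Proposition \ref{sectiondivisible} to exhibit the replaced divisor as an explicitly even class. Your parity bookkeeping and the treatment of the \emph{in particular} clause are both accurate.
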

\begin{proof}
The proof is similar to the one of Corollary \ref{replace}.
\end{proof}
\begin{corollary}\label{CC'2} 
Under the notation above, let
$\phi'' : X \longrightarrow {\bf P}^1$ be a quasi-elliptic fibration with 
20 reducible singular fibers of type ${\rm III}$ and a section $s$. 
We consider a divisor
$$
  D = \sum_{i = 1}^{m}C'_i + \sum_{i= m+1}^{20}C_i.
$$
Then, if $m$ is odd, then $D$ is not divisible by $2$ in ${\rm Pic}(X)$,
and if $m$ is even, then $D$ is divisible by $2$ in ${\rm Pic}(X)$.
\end{corollary}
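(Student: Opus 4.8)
The plan is to argue exactly as in the proof of Corollary \ref{CC'1}, replacing the 2-section input (Corollary \ref{replace}) by its section counterpart (Corollary \ref{replace2}). Here the hypothesis of 20 reducible fibers of type ${\rm III}$ forces $\ell = 0$, so no components $E_i$ or $M_i$ enter, and the reference divisor is simply $\sum_{i=1}^{20} C_i$.

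First I would record the key congruence. Since each type-${\rm III}$ fiber satisfies $F = C_i + C'_i$, replacing $C_i$ by $C'_i$ changes a divisor class by $C'_i - C_i = F - 2C_i$, which is congruent to $F$ modulo $2\,{\rm Pic}(X)$. The divisor $D$ is obtained from $\sum_{i=1}^{20} C_i$ by performing exactly $m$ such swaps, so
$$
D \equiv \sum_{i=1}^{20} C_i + mF \pmod{2\,{\rm Pic}(X)}.
$$

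Next I would feed in the two structural facts already available. The corollary preceding \ref{replace2} (with $\ell = 0$) gives that $\sum_{i=1}^{20} C_i$ is divisible by $2$, so the displayed congruence reduces to $D \equiv mF \pmod{2\,{\rm Pic}(X)}$. If $m$ is even, then $mF \in 2\,{\rm Pic}(X)$ and $D$ is divisible by $2$; this is precisely the content of Corollary \ref{replace2}. If $m$ is odd, then $D \equiv F \pmod{2\,{\rm Pic}(X)}$.

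The one substantive point, and the main obstacle, is the non-divisibility for odd $m$: I must rule out that $F$ itself, hence $D$, is $2$-divisible. This is where I would invoke that a K3 surface admits no multiple fibers in any genus one fibration, so the general fiber class $F$ is primitive and in particular not divisible by $2$ in ${\rm Pic}(X)$. Combining this with $D \equiv F \pmod{2\,{\rm Pic}(X)}$ shows $D$ is not divisible by $2$, completing both cases.
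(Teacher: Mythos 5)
Your proof is correct and follows essentially the same route as the paper: the paper likewise deduces the even case from Corollary \ref{replace2} and the odd case from the fact that the fiber class $F$ is not divisible by $2$ (no multiple fibers on a K3 surface). You have merely made explicit the congruence $D \equiv \sum_i C_i + mF \pmod{2\,\mathrm{Pic}(X)}$ that the paper leaves implicit.
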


\begin{proof}
Since a K3 surface has no multiple fiber, a fiber $F$ is not divisible
by $2$. This corollary follows from this fact and Corollary \ref{replace2}.
\end{proof}
The following corollary is also read off from the list in Shimada \cite[Theorem1.4]{Sh}.

\begin{corollary} 
\label{cor:no}
There exists no genus one fibration on a K3 surface with 4 singular fibers of type ${\rm I}_0^*$
and 4 singular fibers of type ${\rm III}$.
\end{corollary}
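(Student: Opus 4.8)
The plan is to split according to whether the genus one fibration is elliptic, with smooth generic fibre, or quasi-elliptic; in characteristic $2$ these are the only two possibilities for a genus one fibration.

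First I would dispose of the elliptic case by a Euler number count. A fibre of type $\mathrm{I}_0^*$ has Euler number $6$ and a fibre of type $\mathrm{III}$ has Euler number $3$, and in positive characteristic the wild ramification can only increase these contributions. Since a smooth fibre contributes $0$, the hypothetical singular fibres alone would force
\[
c_2(X) = \sum_v e(F_v) \geq 4\cdot 6 + 4\cdot 3 = 36,
\]
contradicting $c_2(X) = 24$ for a K3 surface. Hence an elliptic fibration with this fibre configuration cannot exist. I note that this half of the argument is insensitive to the characteristic and also covers the case of a fibration without section.

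It remains to treat the quasi-elliptic case, which forces $X$ to be supersingular ($\rho(X)=22$). In the notation of this section, $4$ fibres of type $\mathrm{I}_0^*$ and $4$ fibres of type $\mathrm{III}$ correspond to $\ell = 4$ and $20-4\ell = 4$, so $X$ is one of the quasi-elliptic surfaces analysed above. I would then invoke the divisibility results already established. If the fibration carries a section, this is exactly the statement $\ell \neq 4$ following Proposition \ref{sectiondivisible}; if it carries only a $2$-section $s \neq \xi$, the same conclusion follows from Corollary \ref{corollary1} and Corollary \ref{replace}. Concretely, one picks the component $C_i$ of each type $\mathrm{III}$ fibre not meeting $s$; these are four pairwise disjoint $(-2)$-curves, being components of distinct reducible fibres, and the formula for $\xi$ shows that, after possibly replacing an appropriate number of the $C_i$ by $C_i'$, their sum is divisible by $2$ in $\mathrm{Pic}(X)$. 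This exhibits a $2$-divisible sum of $n = 4$ disjoint $(-2)$-curves, which contradicts Corollary \ref{8,12,16,20} and its constraint $n \in \{8,12,16,20\}$.

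The main obstacle is the quasi-elliptic case, and within it the requirement that a genuine section or a $2$-section distinct from the cusp locus $\xi$ be available to set up the basis used in Propositions \ref{sectiondivisible} and \ref{formula1}. Because $\xi$ is itself only a $2$-section and is precisely the class one wants to solve for, one must argue that the rank-$22$ Picard lattice of such a surface is spanned together with an additional (multi)section, so that the expression $\xi = \cdots - \frac{1}{2}\sum_j C_j$ is genuinely at one's disposal. Once this representation is secured the divisibility, and hence the contradiction with Corollary \ref{8,12,16,20}, is immediate; the remaining points—that the selected $C_i$ are disjoint $(-2)$-curves and that no reducible fibres other than the prescribed ones can occur by the Picard number count—are routine.
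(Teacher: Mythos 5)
Your handling of the elliptic case is correct and genuinely more elementary than the paper's: the paper first passes to the relative Jacobian and then invokes Ito's classification of extremal elliptic fibrations in characteristic $2$, whereas your Euler-number count $c_2(X)=\sum_v e(F_v)\geq 4\cdot 6+4\cdot 3=36>24$ (with wild ramification only increasing the left-hand side) kills the configuration outright, with no section needed. The quasi-elliptic half of your argument also ends at the same contradiction as the paper's, namely a $2$-divisible sum of $4$ disjoint $(-2)$-curves against Corollary \ref{8,12,16,20}.

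The gap is exactly where you flagged it. You split the quasi-elliptic case into ``has a section'' and ``has a $2$-section $s\neq\xi$'', but nothing guarantees that either alternative holds. The cusp locus $\xi$ is itself a $2$-section, and together with the fibre class and the fibre components it already spans $\Pic(X)\otimes\mathbf{Q}$ (rank $1+ (4\cdot 4+4\cdot 1)+1=22$), so no rank count forces an additional independent horizontal class; consequently Propositions \ref{formula1} and \ref{sectiondivisible}, whose proofs presuppose a basis containing such an $s$ or a section, are not automatically at your disposal. The paper closes this by the same reduction it uses for the elliptic case: pass to the relative Jacobian, which is again a quasi-elliptic K3 surface with the identical configuration of reducible fibres and \emph{with} a section, and then apply Proposition \ref{sectiondivisible} to get the $2$-divisibility of $\sum_{i=1}^{4}C_i$. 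Inserting that one reduction at the start of your quasi-elliptic case makes your argument complete; the $2$-section branch then becomes unnecessary.
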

\begin{proof}
Suppose that there exists such a genus one fibration $\pi$ on a K3 surface.
Considering the relative Jacobian, we may assume that the fibration has a section.
It follows from Ito \cite[Theorem 2.2]{I2} that there exists no such elliptic fibration.
Suppose now that $\pi$ is quasi-elliptic.
Then, using the notation above, we see the divisor $\sum_{i=1}^{4}C_i$ is divisible by $2$
by Proposition \ref{sectiondivisible}.
On the other hand, the divisor $\sum_{i=1}^{4}C_i$ is not divisible by $2$ by
Corollary \ref{8,12,16,20}, a contradiction.
\end{proof}

\section{Concrete results for $n = 20$}
\label{s:20}


In this section, we develop some fairly concrete results for $n=20$, using the notation of the previous section.
These will prove quite useful in proving Theorem \ref{thm} in Section \ref{s:pf}.

The generic point of the moduli of supersingular K3 surfaces gives
an example of a supersingular K3 surface $X$ 
with 20 reducible singular fibers of type ${\rm III}$ 
with 2-section $s\neq\xi$ (Rudakov-Shafarevich \cite{RS}),
thus admitting a configuration of 20 disjoint $(-2)$-curves whose sum is 2-divisible by Corollary \ref{CC'1}.

Explicitly, such a surface is realized
as follows: Consider the first projection map
$pr_1 : S = {\bf P}^1\times {\bf P}^1 \longrightarrow {\bf P}^1$.
Take twenty suitable general points of $S$ and blow-up at these points.
The obtained surface is $X^D$ and $X$ is a purely inseparable covering
of $X^D$.

\subsection{Quasi-elliptic fibrations with section}

Now, we focus on quasi-elliptic fibrations on a K3 surface $X$ with a section.
By Miyanishi \cite{My} (also see Ito \cite{I}) any such fibration is
expressed as
$$
    y^2 = x^3 + c(t)x + d(t)
$$
with $c(t), d(t) \in k[t]$. By a change of coordinates such as
\begin{eqnarray}
\label{eq:change}
   x =X + g(t)^2, y = Y + g(t)X + h(t)  \quad (g(t), h(t)\in k[t])
\end{eqnarray}
if necessary,
we may assume that $X$ is given by the following form:
\begin{equation}\label{general}
    y^2 = x^3 + (t\varphi(t)^2 + a(t)^2)x + t\psi(t)^2
\end{equation}
with $\varphi(t)$, $\psi(t)$, $a(t)\in k[t]$.
The discriminant $\Delta(t)$ of this quasi-elliptic fibration is given by
$$
   \Delta(t) = (t\varphi (t)^2+ a(t)^2)\varphi(t)^4 + \psi(t)^4.
$$
For \eqref{general} to define  a K3 surface, we may assume that 
$${\rm max}\left\{\left[\frac{\deg (t\varphi(t)^2 + a(t)^2)}{4}\right], 
\left[\frac{\deg (t\psi(t)^2)}{6}\right]\right\} = 1$$ 
and for every root $\alpha$ of $\Delta (t) = 0$,
by using the $(t-\alpha)$-adic valuation $v_{\alpha}$ of $k(t)$ such that 
$v_{\alpha}(t-\alpha) = 1$,  we require
$$
{\rm min}\{v_{\alpha}(t\varphi(t)^2 + a(t)^2) -4, v_{\alpha}(t\psi(t)^2) -6\} <0$$
after any transformation \eqref{eq:change} preserving the general shape of \eqref{general} (cf. Ito \cite{I}).
We denote by $\sigma$ the Artin invariant of $X$ and by $r$ the Mordell-Weil rank
of $X \longrightarrow {\bf P}^1$
(i.e.\ the dimension of $\mathrm{MW}(X)$ over the field ${\bf F}_2$).

\begin{lemma}\label{2-torsion}
\label{lem:P}
Assume $\varphi(t)\neq 0$. Then,
$(x, y) =(\psi(t)^2/\varphi(t)^2, \psi(t)^3/\varphi(t)^3+ a(t)\psi(t)/\varphi(t))$
gives a section $P$ of $X \longrightarrow {\bf P}^1$ (which is 2-torsion).
\end{lemma}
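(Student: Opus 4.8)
The plan is to verify directly that the proposed point $(x,y)=(\psi^2/\varphi^2,\ \psi^3/\varphi^3+a\psi/\varphi)$ satisfies the defining equation \eqref{general}, and then to check that it has order $2$ in the Mordell--Weil group. Both are essentially computational, so the proof should be short once the right substitution is organized. Note that in characteristic $2$ the cubic in \eqref{general} has vanishing $x^2$-coefficient, so the Weierstrass form is already in the shape where the addition law and the $2$-torsion condition simplify considerably.

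First I would substitute $x=\psi^2/\varphi^2$ into the right-hand side $x^3+(t\varphi^2+a^2)x+t\psi^2$ and compute. We get
\[
\frac{\psi^6}{\varphi^6}+(t\varphi^2+a^2)\frac{\psi^2}{\varphi^2}+t\psi^2
=\frac{\psi^6}{\varphi^6}+\frac{t\psi^2\varphi^2+a^2\psi^2}{\varphi^2}+t\psi^2.
\]
The two terms containing $t\psi^2$ combine using $t\psi^2\varphi^2/\varphi^2=t\psi^2$, and since we are in characteristic $2$ they cancel, leaving $\psi^6/\varphi^6+a^2\psi^2/\varphi^2=(\psi^3/\varphi^3+a\psi/\varphi)^2$, which is exactly $y^2$ for the claimed $y$. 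This confirms that $P$ lies on $X$, and the crucial input is simply that $t\psi^2+t\psi^2=0$ in characteristic $2$.

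Next I would establish that $P$ is $2$-torsion. In characteristic $2$, a Weierstrass model $y^2=x^3+\alpha x+\beta$ (with no $xy$ or $y$ term) is a quasi-elliptic or supersingular-type equation whose $2$-torsion structure is governed by the locus where the fiber is non-smooth; the cleanest route is to observe that the doubling map sends a point $(x_0,y_0)$ to a point whose $x$-coordinate is the square of a rational expression that degenerates here, so that $2P$ is the zero section. Concretely, one checks $-P=P$: since negation on such a model fixes the $x$-coordinate and the only involution compatible with the group law forces $2P=O$ exactly when $P$ is fixed by negation, and here the $x$-coordinate $\psi^2/\varphi^2$ is a square lying on the cuspidal locus, the point is its own inverse. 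I would phrase this by exhibiting that $P$ coincides with its negative, hence has order dividing $2$, and is nonzero because $\psi/\varphi$ is not identically the zero section.

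I expect the main obstacle to be the $2$-torsion verification rather than the point-lies-on-curve check. In the quasi-elliptic setting the usual elliptic-curve formulas for the negation map and the duplication formula must be handled with care, since the group law lives on the smooth locus and the cusp locus $\xi$ plays a distinguished role; one must make sure the identification $-P=P$ is carried out on the generic fiber (a cuspidal cubic over $k(t)$) where the smooth points form a group isomorphic to the additive group $\mathbb{G}_a$, in which every element is $2$-torsion in characteristic $2$. Invoking that the smooth locus of each cuspidal fiber is $\mathbb{G}_a$ and hence $2$-torsion is the conceptually cleanest justification, and I would lean on that structural fact to bypass the explicit duplication computation.
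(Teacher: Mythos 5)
Your proposal is correct and matches the paper's proof, which simply records the verification as ``straightforward'' and justifies the order by noting that the smooth locus of a general fiber is $\mathbb{G}_a$, hence killed by $2$ in characteristic $2$ --- exactly the structural fact you settle on in your final paragraph. The intermediate musings about negation and duplication formulas are dispensable (and somewhat shaky for a quasi-elliptic fibration), but since you explicitly lean on the $\mathbb{G}_a$ argument instead, the proof stands as written.
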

\begin{proof}
Straightforward.
Note that the order follows directly from the group structure of the smooth locus of a general fiber being 
$\mathbb G_a$.
\end{proof}

\subsection{Artin invariants $\sigma=3,4,\hdots,9$}

We start drawing consequences for most Artin invariants.

\begin{lemma}
\label{lem:20xIII}
Let $\sigma\in\{3,4,\hdots,9\}$.
Then $X$ admits a quasi-elliptic fibration with a section and 20 fibers of type $\mathrm{III}$.
\end{lemma}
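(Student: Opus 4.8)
The plan is to realize each Artin invariant $\sigma\in\{3,4,\dots,9\}$ by an explicit Weierstrass equation of the form \eqref{general}, chosen so that the discriminant $\Delta(t)$ has exactly $20$ simple roots, each giving a type $\mathrm{III}$ fiber, and so that the associated K3 surface is supersingular with the prescribed Artin invariant. Since the fibration \eqref{general} comes with the rational section at infinity (the zero section), the ``with a section'' requirement is automatic, and Lemma \ref{lem:P} supplies an extra $2$-torsion section that will help pin down the Mordell--Weil structure and hence the Artin invariant. So the first step is to select, for each value of $\sigma$, polynomials $\varphi(t),\psi(t),a(t)$ of suitable degrees.

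\medskip\noindent\textbf{Degree count and fiber types.} Recall that for a quasi-elliptic K3 surface the reducible fibers sit over the zeros of $\Delta(t)=(t\varphi(t)^2+a(t)^2)\varphi(t)^4+\psi(t)^4$, and a type $\mathrm{III}$ fiber contributes a single node to the discriminant (i.e.\ $\Delta$ vanishes to order exactly $1$ there in the normalized model), whereas a $\mathrm{I}_0^*$ fiber would require higher vanishing. First I would check that with a generic choice making $\deg\Delta=20$ all roots are simple, so that all $20$ reducible fibers are of type $\mathrm{III}$ and none degenerate to $\mathrm{I}_0^*$; this uses the normalization conditions on $v_\alpha$ recorded after \eqref{general}. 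The Picard number is then $22$ automatically in the supersingular case, and the frame lattice generated by fiber components, zero section, and the $2$-torsion section \ref{lem:P} must have the correct discriminant. The key numerical identity is the Shioda--Tate type formula relating $\det\mathrm{NS}(X)$, the Mordell--Weil group, and the root lattice $A_1^{20}$ of the $\mathrm{III}$ fibers; since the Artin invariant $\sigma$ is read off from $\mathrm{disc}\,\mathrm{NS}(X)=-2^{2\sigma}$ (and $\mathrm{NS}(X)=U\oplus\widetilde{A_1^{20}}$-type lattices from Table \ref{tab}), controlling $\sigma$ amounts to controlling the saturation of $A_1^{20}\oplus\langle\text{sections}\rangle$ inside $\mathrm{NS}(X)$, i.e.\ the size of $\mathrm{MW}$.

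\medskip\noindent\textbf{Tuning the Artin invariant.} The heart of the argument is arranging the $7$ different values $\sigma=3,\dots,9$. The cleanest approach is to exhibit a single family \eqref{general} depending on enough parameters (the coefficients of $\varphi,\psi,a$) so that the Mordell--Weil rank $r$ — equivalently, by the Artin-invariant/Mordell--Weil relation for supersingular quasi-elliptic surfaces, the quantity $\sigma$ — jumps as the parameters specialize. Concretely, generically one expects small $\mathrm{MW}$ (large $\sigma=9$, matching the fact noted earlier that the generic supersingular K3 carries $20$ fibers of type $\mathrm{III}$), and imposing successive algebraic conditions that force additional sections to appear lowers $\sigma$ step by step down to $3$. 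I would either write one parametrized equation and specialize, or give seven explicit equations, verifying in each case (i) $\deg\Delta=20$ with simple roots, (ii) the K3 conditions from Ito \cite{I}, and (iii) the value of $\sigma$ via the Mordell--Weil rank computed from explicit sections.

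\medskip\noindent\textbf{Main obstacle.} The hard part will be step (iii): computing, or at least bounding, the Mordell--Weil rank precisely enough to certify each target Artin invariant, since finding all sections of a quasi-elliptic fibration is delicate and $\sigma$ is very sensitive to whether the section lattice is saturated. I expect to handle this by combining the explicit $2$-torsion section of Lemma \ref{lem:P} with additional sections produced by the chosen specializations, and then matching the resulting frame lattice against the Rudakov--Shafarevich list in Table \ref{tab}; the lower $\sigma$ (more sections, larger $\mathrm{MW}$) cases will be the most demanding, whereas $\sigma=9$ should follow almost directly from genericity. A secondary check is ensuring no fiber silently upgrades from $\mathrm{III}$ to $\mathrm{I}_0^*$ under the specializations, which I would rule out using the valuation conditions following \eqref{general}.
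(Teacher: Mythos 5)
Your proposal takes a genuinely different route from the paper, but as written it has a real gap. The paper's proof is purely lattice-theoretic: by Shimada's classification \cite[Theorem 1.4]{Sh}, for $\sigma\in\{3,\dots,9\}$ the Picard lattice decomposes as $U\oplus L$ with root lattice $R(L)=A_1^{20}$; the isotropic vector of $U$ produces a genus one fibration with a section whose reducible fibers realize $R(L)$, and a two-line Euler-characteristic count ($20\cdot 2=40>24$) rules out the elliptic/$\mathrm{I}_2$ alternative, forcing quasi-elliptic with $20$ fibers of type $\mathrm{III}$. Existence is thus outsourced entirely to Shimada's table; nothing explicit is constructed.

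Your plan instead asks for explicit Weierstrass models \eqref{general} with $\Delta$ having $20$ simple roots and Mordell--Weil group of the correct size. The bookkeeping you set up is right: with $\ell=0$, Ito's formula \eqref{Ito} forces $r=10-\sigma$, so for $\sigma=3$ you must exhibit a quasi-elliptic fibration whose Mordell--Weil group is $({\bf Z}/2{\bf Z})^{7}$ while $\Delta$ keeps $20$ simple roots. This is precisely the step you defer (``the most demanding''), and it is not a routine verification --- it is the entire content of the lemma. Two concrete warnings. First, specializing the parameters of \eqref{general} does \emph{not} generically produce extra sections while preserving $20$ simple roots: the paper's own hierarchy (Table \ref{tab2}) shows that the natural specializations instead create fibers of type $\mathrm{I}_0^*$, so the locus you need (more torsion, same fiber types) must be located and shown nonempty by a separate argument. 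Second, the paper remarks that a Weierstrass form for the $\sigma=3$ case was only recently discovered by N.~Saito, which indicates that the explicit equations your strategy requires are genuinely hard to find and were not available by generic-plus-specialization reasoning. Without producing the equations, or at least a nonemptiness argument for each stratum, the proposal restates the claim rather than proving it; the lattice-theoretic detour through Shimada's classification is what lets the paper avoid this difficulty altogether.
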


\begin{proof}
By Shimada \cite[Theorem 1.4]{Sh}, there is a decomposition
$$\Pic(X)
\cong U \oplus L \;\;\; \text{ such that } \;\;\; R(L) = A_1^{20}.
$$
Here $R(L)$ denote the lattice generated by the roots in the negative definite lattice $L$.
Any non-zero isotropic vector endows a K3 surface with a genus one fibration
\begin{eqnarray}
\label{eq:f}
f: X \to {\bf P}^1.
\end{eqnarray}
In addition, the hyperbolic plane implies that this fibration admits a section $O$.
Then the reducible fibers are encoded exactly in the root lattice $R(L)$ (cf.\ \cite{Kondo})
-- up to some ambiguity for $A_1$ and $A_2$,
but this is settled presently by topological considerations.
Namely, if some summand $A_1$ were to correspond to a fiber of type I$_2$,
then the fibration would be elliptic, and the Euler--Poincar\'e characteristic would be at least $20\cdot 2=40$.
As this is absurd, we infer that the fibration is quasi-elliptic with 20 fibers of type III as desired.
\end{proof}

\begin{remark}
A Weierstrass form for the above fibrations  with $\sigma=3$ was recently discovered by N.\ Saito.
\end{remark}

\begin{lemma}
The fibration \eqref{eq:f} 
admits a section which meets every reducible fiber in a component $C_i$ different from the component $C_i'$ met by the zero section.
\end{lemma}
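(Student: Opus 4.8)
The plan is to translate the statement into Mordell--Weil theory and then reduce it to a single divisibility which has essentially already been established. Throughout, write $F$ for the fiber class (the isotropic vector in the summand $U$ of the decomposition $\Pic(X)\cong U\oplus L$ provided by Lemma \ref{lem:20xIII}), and choose the zero section $O$ inside that same copy of $U$, so that $\langle F,O\rangle = U$ and $L = U^{\perp}$ in $\Pic(X)$. Since $R(L)=A_1^{20}$, the lattice $L$ has exactly $40$ roots $\pm C_1,\dots,\pm C_{20}$, and these $C_i$ are precisely the fiber components orthogonal to $O$; the components met by $O$ are then $C_i' = F - C_i$, because $C_i'\cdot O = 1$. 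Thus $O$ meets $C_i'$ at each of the twenty type $\mathrm{III}$ fibers, and I must produce a section meeting the other component $C_i$ at every fiber.

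First I would record the Mordell--Weil picture. The trivial lattice is $T=\langle F,O\rangle\oplus\langle C_1,\dots,C_{20}\rangle\cong U\oplus A_1^{20}$, so
\[
\mathrm{MW}(X)\;\cong\;\Pic(X)/T\;\cong\;L/A_1^{20},
\]
and the component homomorphism $\Phi\colon \mathrm{MW}(X)\to\prod_{i=1}^{20}(\mathbf{Z}/2)$ is identified, under this isomorphism, with the reduction map $L/A_1^{20}\hookrightarrow (A_1^{20})^{*}/A_1^{20}=(\mathbf{Z}/2)^{20}$ induced by $L\subseteq (A_1^{20})^{*}$. By the standard theory of the component map (Shioda, Oguiso--Shioda), a section meets the non-identity component $C_i$ at the $i$-th fiber exactly when its $i$-th component invariant is nonzero. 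Hence the whole lemma amounts to showing that the vector $(1,\dots,1)$ lies in the image of $\Phi$; equivalently, writing $\delta_{20}=\tfrac12(C_1+\cdots+C_{20})$ for the glue vector of Remark \ref{rem:NS}, that $\delta_{20}\in L$.

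The key step is then to verify $\delta_{20}\in L$, and here I would invoke the already-proven divisibility. Applying Proposition \ref{sectiondivisible} with $\ell=0$ and $s=O$ gives
\[
\xi = 4F + 2O - \tfrac12\sum_{j=1}^{20}C_j
\]
in $\Pic(X)$, where $\xi$ is the cusp locus; rearranging shows $\tfrac12\sum_j C_j = 4F+2O-\xi\in\Pic(X)$. Since each $C_j$ is a fiber component orthogonal to $O$, this class is orthogonal to both $F$ and $O$, hence lies in $U^{\perp}=L$. Therefore $\delta_{20}=\tfrac12\sum_j C_j\in L$, its class in $L/A_1^{20}$ is represented by a (unique) section $s'$, and $\Phi(s')=(1,\dots,1)$. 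That section meets $C_i$ rather than $C_i'$ at every reducible fiber, which is what was claimed.

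I expect the main obstacle to be expository rather than substantive: carefully pinning down the identification of the geometric component map $\Phi$ with the lattice-theoretic reduction $L/A_1^{20}\to (A_1^{20})^{*}/A_1^{20}$, and invoking that every Mordell--Weil class is realized by an honest section exhibiting the predicted intersection pattern. One should also make sure at the outset that the zero section can indeed be taken inside the $U$-summand, so that the twenty roots of $R(L)$ really are the fiber components disjoint from $O$. Once this is arranged, the divisibility coming from Proposition \ref{sectiondivisible} does all the work and no new computation is needed.
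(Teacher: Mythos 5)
Your argument is correct, but it runs in the opposite direction from the paper's. The paper works with the explicit $2$-torsion section $P$ of Lemma \ref{lem:P}: since there are $20$ fibers of type $\III$, the discriminant has only simple roots, which forces $\gcd(\varphi,\psi)=1$ and hence $(P.O)=3$; the height formula $0=h(P)=4+2(P.O)-\sum_v \text{contr}_v(P)\geq 10-20\cdot\tfrac12$ is then an equality, so every local contribution equals $\tfrac12$ and $P$ meets the non-identity component at all twenty fibers. The $2$-divisibility of $\sum_i C_i$ (Corollary \ref{cor:s=3-9}) is a \emph{consequence}. You instead establish the divisibility first, via the cusp-curve relation of Proposition \ref{sectiondivisible} with $\ell=0$, and then manufacture the section from the class $\delta_{20}=\tfrac12\sum_i C_i\in L$ using the Shioda--Tate isomorphism $\mathrm{MW}\cong \Pic(X)/T$ together with the elementary observation that $P\cdot C_v \bmod 2$ depends only on the class of $P$ modulo $T$. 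Both routes are legitimate and rest on the quasi-elliptic analogue of Mordell--Weil theory; yours is more lattice-theoretic (in the spirit of Criterion \ref{crit}) and avoids the height computation, while the paper's produces a concrete section (the $2$-torsion $P$) from the Weierstrass form. Two points you should make explicit: first, Proposition \ref{sectiondivisible} is stated under the standing hypotheses ``$A\neq\emptyset$, $n=20$'' of its section, so you must observe (as the paper itself implicitly does when proving Corollary \ref{cor:no}) that its proof is a free-standing intersection computation with the cusp curve $\xi$, valid for any quasi-elliptic K3 fibration with a section and reducible fibers $\ell\,\I_0^*+(20-4\ell)\,\III$; second, the identification of the abstract roots of $L$ with the actual fiber components disjoint from $O$ may require the reflections discussed in Criterion \ref{crit}, a point you flag but should not gloss over.
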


\begin{proof}
With 20 fibers of type III, the discriminant $\Delta$ has 20 simple roots.
Hence $\varphi\not\equiv 0$, and there is the section $P$ from Lemma \ref{lem:P}.
We claim that $(P.O)=3$ always. By inspection, this  fails to hold true if and only if gcd$(\varphi, \psi)\neq 1$.
But in that case, there would be a place $v$ such that $v(\Delta)\geq 4$, i.e.\ a fiber of type I$_0^*$ (at least), a contradiction.

With the given intersection number,
we can apply the theory of Mordell--Weil lattices \cite{MWL} to compute the height of $P$ -- which is zero, for $P$ is torsion:
\[
0 = h(P) = 4 + 2 (P.O) - \sum_v \text{contr}_v(P) \geq 10 - 20\cdot \frac 12.
\]
Here the contraction term at a reducible fiber is non-zero if and only if $P$ meets a different component than $O$.
Thus the statement follows.
\end{proof}

\begin{corollary}
\label{cor:s=3-9}
The divisor $\sum_{i=1}^{20} C_i$ is 2-divisible in $\Pic(X)$.
\end{corollary}

\begin{proof}
As predicted by \cite[Lemma 6.16]{MWL}, we can write
\[
P = O + 5 F - \frac 12\sum_{i=1}^{20} C_i
\]
and deduce the claim.
\end{proof}

\subsection{Artin invariant $\sigma=2$}
\label{ss:s=2}

If $X$ has Artin invariant $\sigma=2$, then it admits a quasi-elliptic fibration with 
a section and reducible fibers of type III (16 times) and I$_0^*$ (once);
this follows from Shimada \cite[Theorem 1.4]{Sh} as in the proof of Lemma \ref{lem:20xIII}.
Therefore $\Delta$ has 16 simple roots and one 4-fold root (and $\varphi\not\equiv 0$).
By considering the formal derivative, we deduce that this is a root of $\varphi$ and thus also of $\psi$.
It follows that gcd$(\varphi, \psi)=l_0$ for a linear polynomial $l_0$ (or $\infty$ with multiplicity one,
but by applying a suitable M\"obius transformation we may assume that neither has $\Delta$ a root at $\infty$ nor do $P$ and $O$ intersect there).
Hence the section $P$ satisfies $(P.O)=2$.
The height returns
\[
0 = h(P) = 4 + 2(P.O) - \sum_v \text{contr}_v(P) = 8 - \underbrace{\begin{cases} 0\\ 1\end{cases}}_{\text{at I}_0^*}
- \underbrace{\sum\begin{cases} 0\\
1/2
\end{cases}}
_{\text{ at the 16 III}}
\]
In fact, we claim that $P$ intersects the fiber of type I$_0^*$ at the same component as $O$.
To see this, assume the contrary. Then $P$ specializes to the cusp $(a|_{l_0=0}, 0)$ 
of the fiber at $l_0=0$ of the Weierstrass model \eqref{general}.
Equivalently, $l_0^3\mid (\psi^2+a \varphi^2)$. But then $l_0^6\mid\Delta$, a contradiction.

Thus the contraction term at the fiber of type I$_0^*$ is zero,
and in consequence, the contraction terms at all fibers of type III are $1/2$.
Hence $P$ meets all these fibers in a component $C_i$ different from the component $C_i'$  met by $O$,
and $P$ can be expressed as
\[
P = O + 4F - \frac 12\sum_{i=1}^{16} C_i.
\]
That is, $n=16$ is supported on $X$, but also $n=20$ by virtue of the 2-divisible divisor
\[
\sum_{j=1}^4 E_j + C_1' + \sum_{i=2}^{16} C_i = - 2M_1+ 2C_1' + \sum_{i=1}^{16} C_i
\]
in the notation of the previous section.

\subsection{Artin invariant $\sigma=1$}

We conclude this section with the following negative result 
for Artin invariant $\sigma=1$ (in line with Theorem \ref{thm}):

\begin{lemma}
\label{lem:s=1}
The supersingular K3 surface $X$ of Artin invariant $\sigma=1$ 
does not support a set of 20 disjoint $(-2)$-curves whose sum is 2-divisible in $\Pic(X)$.
\end{lemma}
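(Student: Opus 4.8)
The plan is to rule out $n=20$ for the supersingular K3 surface $X$ with Artin invariant $\sigma=1$ by exploiting the very rigid structure of its Picard lattice, which by Table \ref{tab} is $\Pic(X)\cong U\oplus D_4\oplus E_8^2$. First I would invoke Theorem \ref{theorem}: if $X$ supported $20$ disjoint $(-2)$-curves with $2$-divisible sum, then (since $\sigma=1$ forces $\rho=22$ and one checks $A\neq\emptyset$ in this case, as $A=\emptyset$ yields only $n=8,12$ by Proposition \ref{rational}) $X$ would carry a quasi-elliptic fibration $\phi'':X\to\mathbf P^1$ whose only reducible fibers are of type $\mathrm{III}$ or $\mathrm{I}_0^*$, with exactly $20-4\ell$ fibers of type $\mathrm{III}$ and $\ell$ of type $\mathrm{I}_0^*$ carrying the $20$ curves. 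So the task reduces to showing that \emph{no} such quasi-elliptic fibration on this particular $X$ produces a $2$-divisible sum of $20$ disjoint $(-2)$-curves.

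The heart of the argument is a lattice-theoretic obstruction. The key point is that the lattice $U\oplus D_4\oplus E_8^2$ is of type $\mathrm{I}$ with discriminant group $(\mathbf Z/2\mathbf Z)^2$, coming entirely from the $D_4$ summand (since $E_8$ is unimodular), and its discriminant form is that of $D_4$, taking the value $1$ on every nonzero element. I would enumerate the possible frame configurations: for a quasi-elliptic fibration with a section (or $2$-section), Corollaries \ref{CC'2}, \ref{replace2} and Proposition \ref{sectiondivisible} (resp.\ \ref{formula1}, \ref{replace}) pin down exactly which combinations $\sum C_i'$, $\sum C_i$ are $2$-divisible in terms of the parity of the number of swapped components and of $\ell-k$. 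The strategy is then to test the putative class $\tfrac12\sum_{i=1}^{20}E_i$ (or whichever $20$-curve configuration arises) against the discriminant form: compute its image in $\Pic(X)^*/\Pic(X)$ and show it is nonzero, exactly as was done successfully for the \emph{positive} case $n=12$ in Example \ref{ex:12}. Concretely, for a candidate divisor $D$ supported on $20$ disjoint $(-2)$-curves, $D^2=-40$, so $q(\tfrac12 D)=-40/4=-10\equiv 0\pmod{2\mathbf Z}$; this congruence alone does not obstruct, so the real obstruction must come from pairing $\tfrac12 D$ against specific lattice vectors (the non-contracted curves, the section, the cusp locus $\xi$) and detecting a half-integer, i.e.\ a failure of $\tfrac12 D$ to lie in $\Pic(X)$ itself rather than merely in the dual with even self-pairing.

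I would therefore reduce, via Shimada's classification \cite[Theorem 1.4]{Sh} of which root configurations embed into $U\oplus D_4\oplus E_8^2$ as frames of genus one fibrations, to the finite list of admissible $(\ell, \text{fiber type})$ patterns on $\sigma=1$. For each, I would use the explicit formula for $\xi$ in Propositions \ref{formula1} or \ref{sectiondivisible} together with Corollaries \ref{CC'1}, \ref{CC'2} to determine precisely which $20$-element configurations of disjoint $(-2)$-curves are $2$-divisible, and then verify by the height/contraction computation \eqref{xi} (or directly via the Mordell--Weil height pairing as in Corollary \ref{cor:s=3-9}) that every such configuration actually involves only $20-4\ell<20$ disjoint $\mathrm{III}$-components, never a genuine set of $20$ \emph{disjoint} curves with $2$-divisible sum. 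The anticipated main obstacle is the bookkeeping across fiber types: when $\ell>0$, the $\mathrm{I}_0^*$ fibers contribute the curves $E_{4i-3},\dots,E_{4i},M_i$, and one must carefully check that any $2$-divisible combination of $20$ \emph{disjoint} curves (which cannot include both $M_i$ and all four tails, nor two components of a type $\mathrm{III}$ fiber) is incompatible with the parity constraints forced by $\sigma=1$. The cleanest route is likely to show that on $\sigma=1$ the only $2$-divisible such configuration would force an illegal value of $\ell$ (as in the \textq{$\ell\neq 4$} conclusion following Proposition \ref{sectiondivisible}), contradicting $\rho=22$ and the uniqueness of the type $\mathrm{I}$ lattice.
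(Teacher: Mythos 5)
Your first step is the right one and coincides with the paper's: since $n=20$ cannot occur when $A=\emptyset$ (Proposition \ref{rational}), Theorem \ref{theorem}\,(ii) forces $X$ to carry a quasi-elliptic fibration whose reducible fibers are all of type $\mathrm{III}$ or $\mathrm{I}_0^*$. But from there the plan goes astray. The paper's proof ends immediately at this point: by Shimada \cite[Theorem 1.4]{Sh} (or by Elkies--Sch\"utt \cite{ES}), the supersingular K3 surface of Artin invariant $\sigma=1$ admits \emph{no} genus one fibration whose reducible fibers are of these types only --- equivalently, no frame $A_1^{20-4\ell}\oplus D_4^{\ell}$ occurs for $\sigma=1$ --- so the contradiction is already in hand. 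You invoke the same classification, but only as a device for enumerating a supposedly nonempty list of admissible fiber patterns on which you would then run a divisibility analysis; you never observe that this list is empty, and that observation is the entire content of the proof.

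More seriously, the analysis you propose to run on each hypothetical pattern cannot deliver the contradiction you are after. Corollaries \ref{CC'1}, \ref{CC'2}, \ref{replace} and \ref{replace2} show that \emph{whenever} such a quasi-elliptic fibration exists (with at least one fiber of type $\mathrm{III}$), a suitable choice of the parity of components $C_i$ versus $C_i'$, combined with the four disjoint tails of each $\mathrm{I}_0^*$ fiber, \emph{does} produce a $2$-divisible sum of $20$ disjoint $(-2)$-curves. So the parity bookkeeping you describe would confirm existence rather than refute it; and, as you yourself note, the discriminant-form value $q(\tfrac12 D)\equiv 0 \bmod 2\mathbf{Z}$ gives no obstruction either. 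The only available obstruction is the non-existence of the fibration itself on the $\sigma=1$ surface, which your proposal does not establish. As written, the argument does not close.
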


\begin{proof}
Assume the contrary.
By Theorem \ref{theorem} (ii), this implies that $X$ admits a quasi-elliptic fibration 
whose reducible fibers have types III and I$_0^*$ only.
But this is impossible by Shimada \cite[Theorem 1.4]{Sh}
(or by Elkies--Sch\"utt \cite{ES}).
\end{proof}

\section{Exemplary Hierarchy of quasi-elliptic fibrations}

For immediate use, we record an interesting hierarchy of quasi-elliptic fibrations;
this will highlight how to move from one Artin invariant stratum to the next
while also providing input for the proof of Theorem \ref{thm} (to be completed in the next section).

\begin{lemma}\label{5D4}
Let $\pi : X \longrightarrow {\bf P}^1$ be a quasi-elliptic K3 surface 
with a section $O$. Assume that the types of reducible singular fibers 
are 5${\rm I}_0^{*}$.
Then, $X$ is given by the following equation.
\begin{eqnarray}
\label{eq:qe}
y^2 = x^3 + a(t)^2x + t\prod_{i=1}^{5}(t - \alpha_i)^2.
\end{eqnarray}
Here, the coefficients $\alpha_i\in k$ are different from each other and $\deg~ a(t) \leq 3$.

Conversely, any such equation gives a quasi-elliptic K3 surface
with 5 reducible singular fibers of type ${\rm I}_0^{*}$.
\end{lemma}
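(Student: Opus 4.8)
The plan is to start from the normal form available for any quasi-elliptic K3 surface with a section. By Miyanishi \cite{My} and Ito \cite{I}, after the coordinate changes \eqref{eq:change} we may bring $\pi$ into the shape \eqref{general}, namely $y^2 = x^3 + (t\varphi^2 + a^2)x + t\psi^2$, with discriminant $\Delta = (t\varphi^2 + a^2)\varphi^4 + \psi^4$. The reducible fibres lie over the roots of $\Delta$, and—exactly as already used in Section \ref{ss:s=2}—a simple root produces a fibre of type $\mathrm{III}$ while a $4$-fold root produces a fibre of type $\mathrm{I}_0^*$. Thus the hypothesis that all five reducible fibres have type $\mathrm{I}_0^*$ says precisely that every root of $\Delta$ has multiplicity $4$.

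For the forward direction the key observation is a one-line computation in characteristic $2$. Rewriting $\Delta = t\varphi^6 + (a\varphi^2 + \psi^2)^2$ and using that a square has vanishing derivative and that $6 = 0$, the formal $t$-derivative is $\Delta' = \varphi^6$. If every root of $\Delta$ has even multiplicity, then $\Delta$ is a square in $k[t]$, so $\Delta' = 0$; hence $\varphi^6 = 0$, i.e.\ $\varphi \equiv 0$. With $\varphi = 0$ the form collapses to $y^2 = x^3 + a^2 x + t\psi^2$ and $\Delta = \psi^4$, so the $4$-fold roots of $\Delta$ are exactly the roots of $\psi$, each of multiplicity one; thus $\psi$ is separable. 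Counting the five prescribed fibres—and, after a M\"obius transformation as in Section \ref{ss:s=2}, placing them at finite $t\neq 0$ so that the fibres over $0$ and $\infty$ remain irreducible—forces $\deg\psi = 5$, whence $\psi = \prod_{i=1}^{5}(t-\alpha_i)$ with distinct $\alpha_i$. Finally the K3 condition recalled after \eqref{general} gives $[\deg(a^2)/4]\le 1$, that is $\deg a \le 3$.

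For the converse I would read the construction backwards. Given the stated equation with distinct $\alpha_i$ and $\deg a\le 3$, one has $\Delta = \psi^4 = \prod_{i=1}^{5}(t-\alpha_i)^4$, so $\Delta$ has exactly five $4$-fold roots and no others; by the dictionary these give five fibres of type $\mathrm{I}_0^*$ and no further reducible fibre. The degree data $\deg(a^2)\le 6$ and $\deg(t\psi^2)=11$ yield $\max\{[\deg(a^2)/4],[\deg(t\psi^2)/6]\}=1$, so \eqref{general} indeed defines a K3 surface; the fibration is genuinely quasi-elliptic, since $\varphi=0$ makes the coefficient $a^2$ of $x$ a perfect square while $(t\psi^2)' = \psi^2\neq 0$ keeps the cusp locus a nonconstant $2$-section. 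A short check at $t=0$ and $t=\infty$, where $\Delta$ is a unit after the standard change of variables, shows those fibres are irreducible, confirming that the reducible fibres are exactly the five of type $\mathrm{I}_0^*$.

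The \emph{main obstacle} is the input I take for granted: the precise correspondence in the characteristic-$2$ quasi-elliptic Tate algorithm between the multiplicity of a root of $\Delta$ and the Kodaira type of the fibre (simple $\leftrightarrow \mathrm{III}$, $4$-fold $\leftrightarrow \mathrm{I}_0^*$), together with the minimality of \eqref{general} at each $\alpha_i$. This is exactly the local analysis underlying Section \ref{ss:s=2}, so I would cite Ito \cite{I} and the computations already made there rather than reprove it; granting it, the identity $\Delta'=\varphi^6$ makes the remainder of the argument essentially automatic.
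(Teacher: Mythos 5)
Your argument is correct and follows essentially the same route as the paper: both reduce to the normal form \eqref{general} and differentiate the discriminant to get $\Delta'=\varphi^6$, forcing $\varphi\equiv 0$ once $\Delta$ is known to be a fourth power (the paper cites Ito's Proposition 4.2 for $\Delta=g(t)^4$ with $g$ of degree $5$ and simple roots, where you derive the same from the multiplicity--fiber-type dictionary), and the converse is read off from the resulting discriminant $\psi^4$ in both treatments.
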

\begin{proof}
For such a quasi-elliptic K3 surface,
by Ito \cite[proposition 4.2]{I} the discriminant is given by the form
$\Delta(t) = g(t)^4$ with a polynominal $g(t)$ of degree 5
which has simple roots. Differentiating both sides of
$$
(t\varphi (t)^2+ a(t)^2)\varphi(t)^4 + \psi(t)^4 = g(t)^4,
$$
we have $\varphi(t)^6 = 0$. Therefore, $\varphi(t)=0$,
and \eqref{general} reduces to \eqref{eq:qe}.
The converse follows from the form of the discriminant.
\end{proof}

We return to the general Weierstrass form \eqref{general} of a quasi-elliptic K3 surface with a section.
Note that the even degree coefficients of $a(t)$ could be eliminated by a transformation of the shape \eqref{eq:change}.
This leaves 12 coefficients; comparing against the 4 standard normalizations (M\"obiius transformations and scaling $x,y$)
confirms that a general member has Artin invariant $\sigma=9$, as these surfaces move in an 8 dimensional family
by Rudakov--Shefarevich \cite{RS}.
Accordingly,   the Mordell--Weil torsion-rank $r$
(i.e.\ the dimension over the field ${\bf F}_2$) of $X \longrightarrow {\bf P}^1$ is generally $1$.

For a convenient representation, let $\alpha_i \in k$ $(i = 1, \ldots, 5;\; \alpha_i \neq \alpha_j$  if $i\neq j)$
and set $\psi(t) = \prod_{i=1}^{5}(t - \alpha_i)$.
Since $\deg ~\varphi(t) \leq 3$, taking arbitrary three different $\alpha_i$,
say, $\alpha_1$, $\alpha_2$ and $\alpha_3$, we may write
$$
 \varphi(t) = a_3(t-\alpha_1)(t-\alpha_2)(t-\alpha_3)+a_2(t-\alpha_1)(t-\alpha_2)
 +a_1(t -\alpha_1) + a_0
$$
for some $a_i\in k$.
Denote by $\ell$ the number of reducible fibers of type ${\rm I}_0^{*}$.
For a quasi-elliptic K3 surface whose reducible singular fibers are 
either of type ${\rm III}$
or of type ${\rm I}_0^{*}$, Ito's formula (cf. Ito \cite[Proposition 4.4]{I})
is given by
\begin{equation}\label{Ito}
    \sigma + r = 10 - \ell.
\end{equation}
First, assume $\varphi(t)$ is prime to $\psi(t)$. Then, since $\Delta(t)$ has
no common roots with $\frac{d}{dt}\Delta(t)$,
$\Delta(t)$ has only simple roots. 
This confirms that there are 20 reducible singular fibers of type ${\rm III}$.

Now, let $a_0 = 0$. Then, $v_{\alpha_1}(\Delta(t)) = 4$ and 
a reducible singular fiber of type ${\rm I}_0^{*}$ appears. 
The types of reducible singular fibers of a general member in this equation
are one ${\rm I}_0^{*}$ and 16 ${\rm III}$. 
Obviously, the number of moduli of this
family is 7, and by Ito's formula we have $\sigma + r = 9$.
Therefore, by Lemma \ref{2-torsion} we have $\sigma = 8$ generally.
We repeat this argument for the cases $a_0=a_1=0$, $a_0=a_1=a_2=0$ and
$a_0=a_1=a_2=a_3=0$, respectively. Then, using Lemma \ref{5D4}, 
we get the first four rows of Table \ref{list}.

\begin{theorem}
\label{thm:strata}
The moduli of supersingular K3 surfaces with Artin invariant $\sigma\leq 9$ can be stratified as in Table \ref{tab2}.
\end{theorem}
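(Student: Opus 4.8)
The plan is to realize each Artin-invariant stratum by an explicit quasi-elliptic family inside the Weierstrass form \eqref{general}, reading the reducible fibers off the discriminant $\Delta(t)$ and fixing $\sigma$ through Ito's formula \eqref{Ito} together with the Rudakov--Shafarevich count, by which the locus of supersingular K3 surfaces of Artin invariant $\sigma$ is $(\sigma-1)$-dimensional. The organizing principle is that such a family admits two independent degeneration directions: collisions between the roots of $\varphi$ and $\psi$, which raise the number $\ell$ of fibers of type $\mathrm{I}_0^*$, and specializations that force additional sections, which raise the Mordell--Weil $\mathbb{F}_2$-rank $r$. Since $\sigma = 10-\ell-r$ by \eqref{Ito}, each such condition lowers $\sigma$, and the point of the construction is that the moduli dimension of each resulting stratum matches the Rudakov--Shafarevich value $\sigma-1$, so that the hierarchy of families reproduces the stratification by Artin invariant.

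First I would normalize $\psi(t)=\prod_{i=1}^5(t-\alpha_i)$ with distinct $\alpha_i$ and expand $\varphi$ in the basis adapted to $\alpha_1,\alpha_2,\alpha_3$. As recorded above, after eliminating the even-degree part of $a(t)$ the form \eqref{general} carries twelve essential coefficients, so modulo the four standard normalizations the generic member has $8$ moduli; its discriminant has only simple roots, giving $20$ fibers of type $\mathrm{III}$, and Lemma \ref{2-torsion} supplies a section, whence $r=1$ and $\sigma=9$. Imposing successively $a_0=0$, then $a_0=a_1=0$, then $a_0=a_1=a_2=0$ forces one, two, three common roots of $\varphi$ and $\psi$, trading four fibers of type $\mathrm{III}$ for one of type $\mathrm{I}_0^*$ at each step; thus $\ell=1,2,3$ and, the $2$-torsion section persisting, $\sigma=8,7,6$. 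Because $\deg\varphi\le 3$, no member reaches $\ell=4$ while $\varphi\not\equiv 0$, in accordance with Corollary \ref{cor:no}; setting $\varphi\equiv 0$ lands on the $5\mathrm{I}_0^*$ family \eqref{eq:qe} of Lemma \ref{5D4}, whose generic member has $3$ moduli and hence $\sigma=4$, $r=1$.

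For the remaining invariants I would move in the $r$-direction. Inside the $5\mathrm{I}_0^*$ family the only freedom lies in $a(t)$ and the $\alpha_i$, and imposing conditions on $a(t)$ that create further independent sections raises $r$ to $2,3$, yielding $\sigma=3,2$; the value $r=4$, which would give $\sigma=1$, is excluded, in agreement with Lemma \ref{lem:s=1}. The intermediate invariant $\sigma=5$, unreachable through $\ell=4$, I would instead obtain from one of the families with $\ell\le 3$ by a single specialization producing a second section. Finally, matching the dimension of each specialized family against the Rudakov--Shafarevich count, and invoking Shimada \cite{Sh} to rule out any further reducible configurations, certifies that the strata listed in Table \ref{tab2} are precisely those that occur.

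The main obstacle will be the precise determination of $r$ at each stratum, that is, excluding accidental jumps of the Mordell--Weil rank that would collapse the expected $\sigma$. In the $\mathrm{III}$-rich strata this is controlled cleanly by the rigid torsion section of Lemma \ref{2-torsion} and the height computation, whose contraction terms are forced by the absence of any place with $v(\Delta)\ge 6$ (exactly as in Corollary \ref{cor:s=3-9} and Subsection \ref{ss:s=2}). Inside the $5\mathrm{I}_0^*$ family, by contrast, the extra sections and their independence over $\mathbb{F}_2$ must be exhibited and checked by an explicit analysis of the height pairing, and it is the bookkeeping of these sections against the dimension of each specialized family that constitutes the technical heart of filling in and closing off Table \ref{tab2}.
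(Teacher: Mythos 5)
Your treatment of the strata $\sigma=9,8,7,6$ follows the paper's argument (successively imposing $a_0=0$, $a_0=a_1=0$, $a_0=a_1=a_2=0$, counting moduli, and pinning $r=1$ via the torsion section of Lemma \ref{2-torsion}), and that part is fine. The bottom half of the table is where your proposal goes wrong. First, the $5\,\mathrm{I}_0^*$ family \eqref{eq:qe} has $4$ moduli, not $3$: from the $12$ essential coefficients of \eqref{general} one removes the four coefficients of $\varphi$, leaving $8$, and subtracting the $4$ standard normalizations gives $4$ (your count of $3$ overlooks that making $\psi$ monic in the normal form of Lemma \ref{5D4} already consumes one of those normalizations). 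Hence the generic member of this family has $\sigma=5$, and by Ito's formula \eqref{Ito} with $\ell=5$ it has $r=0$ --- consistent with the fact that the torsion section of Lemma \ref{lem:P} requires $\varphi\not\equiv 0$ and therefore \emph{disappears} on this family. Your assertion that the generic member has $\sigma=4$ and $r=1$ contradicts both points, and it then forces you to manufacture $\sigma=5$ out of an $\ell\le 3$ family with an extra section, which is not what Table \ref{tab2} asserts: the table places $\sigma=5$ in the $5\,\mathrm{I}_0^*$ stratum with $r=0$.

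Second, your exclusion of $r=4$ is a misreading. Table \ref{tab2} explicitly includes $r=4$, $\sigma=1$ as the terminal stratum, and the paper's proof relies on the existence of this terminal object (by \cite{ES}) to certify that none of the intermediate loci obtained by imposing additional sections is empty or degenerate. Lemma \ref{lem:s=1} does not forbid a quasi-elliptic fibration with five fibers of type $\mathrm{I}_0^*$ on the $\sigma=1$ surface; it only forbids a $2$-divisible sum of $20$ disjoint $(-2)$-curves, and the sum of the $20$ tails of five $\mathrm{I}_0^*$ fibers equals $5F-2\sum_i M_i$, which is not $2$-divisible because $F$ is not. So there is no tension. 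The correct descent through $\sigma=5,4,3,2,1$ starts at the generic $5\,\mathrm{I}_0^*$ member with $r=0$ and imposes one condition per additional independent section, each step dropping $\sigma$ by one via \eqref{Ito}, anchored at one end by the $4$-dimensional moduli count and at the other by the unique $r=4$ surface of \cite{ES}.
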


\begin{table}[!htb]
\centering
\begin{tabular}{|c|c|c|}
\hline
  Artin invariant $\sigma$  & the types of reducible singular fibers & torsion-rank  $r$ of MW
\\ \hline
 9 &  20 ${\rm III}$   &   1
\\ \hline
8  &   16 ${\rm III}$, ~1 ${\rm I}_0^{*}$  &    1
\\ \hline
7  &   12 ${\rm III}$, ~2 ${\rm I}_0^{*}$ &    1
\\ \hline
6  &  8 ${\rm III}$, ~3 ${\rm I}_0^{*}$  &   1
\\ \hline
$\leq$ 5  & 5 ${\rm I}_0^{*}$  &  $0\leq r = 5-\sigma\leq 4$ 
\\ \hline
\end{tabular}
\caption{Reducible singular fibers and Artin invariants of general member of each stratum}
\label{list}
\label{tab2}
\end{table}

\begin{proof}
For $\sigma\geq 6$, the statement was proved before.
Upon specializing to have 5 fibers of type I$_0^*$,
the two-torsion section $P$ disappears (as $\varphi\equiv 0$, in agreement with Corollary \ref{cor:no}).
This confirms that generally $r=0$ by \eqref{Ito}.
The next strata are obtained by successively imposing more and more sections.
Each step depends on one condition;
a priori, the resulting loci could be empty or degenerate,
but since there is a unique terminal object with $r=4$ by \cite{ES},
the strata are as expected.
\end{proof}

Using the notation in the previous section, Table \ref{tab2} gives rise to concrete examples for $n = 8, 12, 16, 20$
such that the divisor $\sum_{i}^{n}C_i$ is divisible by 2,
as will be exploited in the next section. Another construction of 
a quasi-elliptic surface with 5 singular fibers of type ${\rm I}_0^*$ is given 
in Kond\=o-Sch\"oer \cite{KS}.

\section{Proof of Theorem \ref{thm}}
\label{s:pf}

We conclude this paper by working out the proof of Theorem \ref{thm}.
To start with, the statement about the a priori possible values of $n$ was proved in Corollary \ref{8,12,16,20},
and (i) was covered in Corollary \ref{cor:n=8}.
It remains to prove  Theorem \ref{thm} (ii) for all supersingular K3 surfaces.

\medskip

\noindent
The case $n=20$ was ruled out for $\sigma=1$ in Lemma \ref{lem:s=1}
and realized fairly explicitly for $\sigma=2,\hdots,9$ in Corollary \ref{cor:s=3-9} and Subsection \ref{ss:s=2}.
For the generic supersingular K3 surface with $\sigma=10$, we gave an argument at the beginning of Section \ref{s:20},
but for completeness and later use, we give here a general lattice theoretic argument
based on the following characteristic-free criterion:

\begin{criterion}
\label{crit}
If the Picard lattice of an arbitrary supersingular K3 surface $X$ admits 
an orthogonal decomposition
\[
\Pic(X) \cong U(l) \oplus \widetilde{A_1^{4m}} \oplus L
\]
for some $m, l\in {\bf N}$ and an auxiliary lattice $L$, then $X$ contains a set of $4m$ disjoint $(-2)$-curves
whose sum is 2-divisible.
\end{criterion}

\begin{proof}
As in the proof of Lemma \ref{lem:20xIII}, a primitive isotropic vector $e$ from the summand $U(l)$ induces a genus one fibration
\[
f: X \to {\bf P}^1
\]
admitting a multisection of degree $l$.
This has $4m$ singular fibers of type $\I_2$ or $\III$ (induced by the summand $ \widetilde{A_1^{4m}}$),
plus possibly further reducible fibers given by the root sublattice of $L$.
In practice, exhibiting the genus one fibration may involve some reflections (to make $e$ or $-e$ nef,
eliminating the base locus of the linear system $|e|$ resp.\ $|-e|$),
but regardless of this, the images of the generators of the $A_1$-summands under the reflections are single $(-2)$ curves
featured as fiber components.
Of course, any divisibility property is preserved under reflections, 
hence the claim follows.
\end{proof}

Looking at the representations from Table \ref{tab},
the criterion shows that $n=20$ as well as $n=12$ is realized on \emph{any} supersingular K3 surface
of Artin invariant $\sigma=10$.

\medskip

\noindent
We consider the cases $\sigma=6,7,8,9$.
For these Artin invariants, Criterion \ref{crit} applied to the decompositions from Table \ref{tab} shows directly
that $n=8,12,16$ are all realized except the case $\sigma=6, n=16$. This case is also realized by
Table \ref{tab} and the argument as in Example \ref{ex:8}.

\medskip

\noindent
As for the realization of the values $n=8,16$
for the remaining Artin invariants $\sigma\leq 5$,
we recall from Example \ref{ex:8} and\ Example \ref{ex:16}
that having a genus one fibration with two resp.\ four fibers of type $\I_0^*$ suffices.
But then, by Table \ref{tab2}, for each $\sigma\leq 5$, 
there is a quasi-elliptic fibration which even admits five fibers of type $\I_0^*$.

\medskip

\noindent
Now we treat the case $n=12$.
To cover $n=12$ (uniformly across all $\sigma\leq 9$, in fact),
we pursue the approach from \cite[Proposition 12.32, Lemma 12.33]{MWL}
using inseparable base change of semi-stable rational elliptic surfaces.
In fact, it suffices to consider surfaces with $(12-r)$ singular fibers of type $\I_1$ and one of type 
$\I_{r} \; (r=1,2,\hdots,9)$.
These surfaces form $(9-r)$-dimensional families whose general members have Mordell--Weil rank $r-1$ 
(cf.\ the discussion in \cite[\S 8.9]{MWL}).
In \cite[Lemma 12.33]{MWL} it is proved that any supersingular K3 surface $X$ of Artin invariant $\sigma\leq 9$
arises from such a rational elliptic surface $S$ of Mordell-Weil rank $\sigma-1$ by purely inseparable base change.
Note that $S$ has no 2-torsion section by \cite[Corollary 8.32]{MWL},
but $X$ does (see \cite[p.\ 342]{MWL}). Denoting the section by $P$, 
we find that $P$ meets \emph{every} singular fiber in the component
opposite the component met by the zero section $O$, and that $(P.O)=1$.
In lattice language, the lattice $U\oplus A_1^{12-r}\oplus A_{2r-1}$, accounting for the elliptic fibration and the reducible fibers, 
does not embed primitively into $\Pic(X)$,
but its saturation is obtained by adjoining the class of $P$.
Since $(P.O)=1$, we deduce, as before, that
\[
P = O + 3F - \frac 12 \sum_{i=1}^{12-r} C_i - \frac 12 (\Theta_1+2\Theta_2+\hdots+r\Theta_r+(r-1)\Theta_{r+1}+\hdots+\Theta_{2r-1}),
\]
where the $\Theta_i$ denote the components of the fiber of type $\I_{2r}$ of $X$, 
numbered cyclically such that $O$ meets $\Theta_0$,
and the $C_i$ are the components of the fibers of type $\I_2$ not intersecting $O$.
In particular, it follows that
\[
 \frac 12 \left(\sum_{i=1}^{12-r} C_i + \sum_{j=1}^r \Theta_{2j-1}\right) \in\Pic(X)
 \]
as required.

\medskip

\noindent
Finally, we consider the cases $\sigma=10$ and $n=8,16$, respectively.
To complete the proof of Theorem \ref{thm},
it remains to exclude $n=8, 16$ for $\sigma=10$.
To this end, let $\Pic(X) = U(2) \oplus \widetilde{A_1^{20}}$ 
denote the Picard lattice of a supersingular K3 surface $X$ of Artin invariant $\sigma=10$, as in Table \ref{tab},
and assume that
\[
L = \widetilde{A_1^n}\stackrel\iota\hookrightarrow \Pic(X) \;\;\; (n=8,16).
\]
In case $\iota$ is primitive, then we distinguish two cases involving $L^\perp$, an even hyperbolic lattice of rank $22-n$.

\smallskip

\noindent
If $L\oplus L^\perp = \Pic(X)$, then $L^\perp$ has 
discriminant group $A_{L^\perp}\cong({\bf Z}/2{\bf Z})^{22-n}$.
As an orthogonal summand of $\Pic(X)$, $L^\perp$ is automatically of type I.
Thus we infer that $L^\perp$ is $2$-divisible as an even lattice.
But then the scaled lattice $L^\perp(1/2)$ is even unimodular of signature $(1,21-n)$ which is impossible
for $n=8, 16$.

\smallskip

\noindent
If $L\oplus L^\perp \subsetneq \Pic(X)$, then $L$ and $L^\perp$ glue along a common subgroup of the discriminant groups
\[
({\bf Z}/2{\bf Z})^m \cong H \subset A_L, A_{L^\perp} \;\;\; (m>0)
\]
(which is anti-isometric with respect to the induced intersection pairing).
Denoting by 
$$\jmath: H \hookrightarrow A_L\oplus A_{L^\perp}$$ 
the diagonal embedding
(whose image is isotropic),
we obtain from Nikulin's theory \cite{Nikulin} that
\[
({\bf Z}/2{\bf Z})^{20} \cong A_{\Pic(X)} \cong \jmath(H)^\perp/\jmath(H).
\]
Presently, $A_L$ has $n-2$ generators and $A_L^\perp$ has $22-n$ (equalling its full rank),
but in the quotient $ \jmath(H)^\perp/\jmath(H)$, $m$ of the generators of $A_L$ are deemed superfluous
because modulo $\jmath(H)$ they can be represented by an element of $A_{L^\perp}$.
Hence $A_{\Pic(X)}$  has length (i.e.\ minimum number of generators) $\leq 20-m$, giving the desired contradiction.

\medskip

In case $\iota$ is not primitive,
then the primitive closure $L'\subset\Pic(X)$ has discriminant group $A_{L'}\cong({\bf Z}/2{\bf Z})^{m}$ for some $m<n-2= l(\widetilde{A_1^n})$.
It follows for the lengths that
\[
20 = l(\Pic(X)) \leq l(A_{L'}\oplus A_{L^\perp}) \leq m + \rank(L^\perp) = 22 + (m-n)<20.
\]
Again, this gives a contradiction and thus completes the proof of Theorem \ref{thm}.
\qed

\end{document}